\newtheorem{Theorem}{Theorem}
\newtheorem{Lemma}[Theorem]{Lemma}
\newtheorem{Proposition}[Theorem]{Proposition}
\newtheorem{Definition}[Theorem]{Definition}
\newtheorem{Corollary}[Theorem]{Corollary}
\newtheorem{Conjecture}[Theorem]{Conjecture}
\newfont{\bbold}{msbm10 scaled \magstep1}
\newfont{\bbolds}{msbm7 scaled \magstep1}
\newcommand{\ns}{\mathbb{N}}
\newcommand{\zs}{\mathbb{Z}}
\newcommand{\qs}{\mathbb{Q}}
\newcommand{\cs}{\mathbb{C}}
\newcommand{\fps}{formal power series}
\newcommand{\GK}{\mathbb{K}}
 \def\NW{{\sf NW}}\def\NN{{\sf N}}
 \def\SE{{\sf SE}}\def\EE{{\sf E}}
 \def\SS{{\sf S}}
 \def\WW{{\sf W}}
 \def\ES{{\sf ES}}
\def\WN{{\sf WN}}
\newcommand{\cC}{\mathcal C}
\newcommand{\cD}{\mathcal D}
\newcommand{\cV}{\mathcal V}
\newcommand{\cW}{\mathcal W}
\newcommand{\Q}{Q}
\newcommand{\Qa}{\mathcal Q}
\newcommand{\Qta}{\tilde {\mathcal Q}}
\newcommand{\C}{\mathcal C}
\newcommand{\eps}{\varepsilon}
\newcommand{\beq}{\begin{equation}}
\newcommand{\eeq}{\end{equation}}
\newcommand{\gf}{generating function}
\def\emm#1,{{\em #1}}
\newcommand{\p}{permutation}
\newcommand{\ps}{permutations}
\newcommand{\vareps}{\varepsilon}
\newcommand{\bx}{\bar x}
\newcommand{\by}{\bar y}
\newcommand{\bX}{\bar X}
\newcommand{\bY}{\bar Y}
\newcommand{\A}{\mathcal A}
\newcommand{\B}{\mathcal B}
\newcommand{\Ra}{\mathcal W}
\newcommand{\Ha}{\mathcal H}
\newcommand{\Sp}{S^{\bullet}}
\newcommand{\coSp}{s^{\bullet}}
\newcommand{\Sb}{S^{\circ}}
\newcommand{\Qc}{{Q^{\circ}}}
\newcommand{\xc}{t_c}
\begin{document}
\begin{frontmatter}

\title{Permutations sortable by two stacks in parallel and quarter plane walks}

\author{Michael Albert}
\address{Department of Computer Science,
University of Otago,
PO Box 56,
Dunedin 9054,
New Zealand}
\ead{michael.albert@cs.otago.ac.nz}

\author{Mireille Bousquet-M\'elou}
\address{CNRS, LaBRI, Universit\'e de Bordeaux, 351 cours de la
  Lib\'eration,  33405 Talence Cedex, France} 
\ead{bousquet@labri.fr}

\begin{abstract}
At the end of the 1960s, Knuth characterised 
the permutations that can be sorted 
 using a stack in terms of forbidden patterns.
He also showed that they are in bijection with Dyck paths and thus counted by
the Catalan numbers. Subsequently, Even \& Itai, Pratt and Tarjan 
studied permutations that can be sorted using two
stacks in parallel. This problem  is significantly harder. In
particular, a sortable permutation can now be sorted by several
distinct sequences of stack operations. 
Moreover, 
in order to be sortable, a permutation must avoid infinitely many patterns.
The associated counting question has remained open for 40 years.
We solve it by  giving a pair of functional equations
that characterise
the generating function of permutations that can be sorted with two parallel stacks.

The first component of this
system describes the generating function $Q(a,u)$ of 
square lattice loops
 confined to the positive quadrant, counted by the length and the number of
North-West and East-South factors. Our analysis of the asymptotic
number of sortable permutations relies at the moment on two intriguing conjectures
dealing with the series $Q(a,u)$. We prove that they hold for 
loops confined to the upper half plane, or not confined at all. They remain
open for quarter plane loops. 
Given the recent activity on walks
confined to cones, we believe them to be attractive
\emph{per se}.
\end{abstract}

\begin{keyword}
permutations \sep stacks \sep exact and asymptotic enumeration \sep quarter plane walks
\MSC[2010] 05A05 \sep 05A15 \sep 05A99
\end{keyword}

\end{frontmatter}

\section{Introduction}

If we have a device whose only ability is to rearrange certain
sequences of objects, it is  natural to ask ``What
rearrangements can my device produce?'' When the device is an abstract
one that can operate on sequences
 of any size, this becomes a
combinatorial question. Such questions were apparently first
considered by Knuth~\cite{Knuth68} who dealt with the case where the device was a
stack, i.e.~a storage mechanism operating in a last in, first out
manner (Figure~\ref{fig:one-stack}).

\begin{figure}[ht]
\begin{center}
{\includegraphics[scale=0.6]{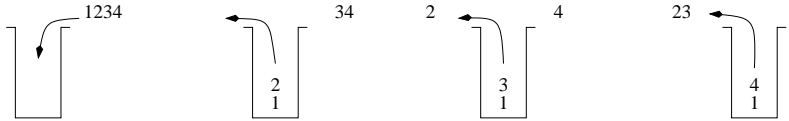}}
\caption{Four steps in the sequence of operations that outputs $2341$
  from $1234$ using a 
 stack. Each arrow shows an  operation
  that is about to  be performed.} 
\label{fig:one-stack}
\end{center}
\end{figure}

Using a stack it is clear that the input sequence $abc$ cannot produce
the output sequence $cab$ as, in order for $c$ to be
the first element output, both $a$ and $b$ must be in the stack
together but then they will be output as $ba$ and not as $ab$. 
This is in fact the only restriction:
a permutation of an input sequence is achievable
if and only if it never
moves a later item ($c$) before two earlier items ($a$ and $b$) while
leaving the earlier items in order. 
In modern language, if the input is $12 \cdots n$, the 
output permutations are those that \emm avoid the pattern, 312.
The stack operations that will
produce an output sequence from a given input sequence are easily seen
to be uniquely determined. So, it is also routine to count such
permutations and to discover that they are enumerated by the Catalan
numbers. 
This is described in Section 2.2.1 of \emph{The Art of Computer
Programming}~\cite{Knuth68}.  Knuth also establishes there similar
results for \emph{input-restricted  deques} (double-ended queues).

Knuth's investigations, nicely described in terms of ``railway yard
switching networks'', were extended by  Even \& Itai~\cite{even-itai}, Pratt~\cite{Pratt73} and Tarjan~\cite{tarjan} who
considered more general networks of stacks and queues, including the
small network consisting of two parallel stacks  that we study in
this paper (Figure~\ref{fig:two-stacks}). 
 This  work was foundational for the study of \emph{permutation
classes} which can loosely be described as those collections of
permutations 
that are closed by taking sub-permutations\footnote{To be clear, given
  a permutation of $\{1,2,\ldots,n\}$ written as a word in one line
  notation, we form a sub-permutation by taking any subword (of length
  $k$ say) and then replacing the symbols of the subword by
  $\{1,2,\ldots,k\}$ while maintaining their relative values. For
  instance taking the subword of $15324$ occurring in the second,
  fourth and fifth positions ($524$) illustrates that $312$ is a
  sub-permutation of $15324$.}. In our case, we 
observe indeed that any sub-permutation of a permutation that can be
produced using  two parallel stacks can itself be produced by this device 
 simply by ignoring any
 operations that affect elements not in the sub-permutation.
The study of permutation classes has been an active and growing field, often concentrating
on enumeration, but also dealing with structural properties of these
classes. For some general discussions and background
see the
books~\cite{bona2012combinatorics,kitaev2011patterns,MR2722945},
and~\cite{bona-discipline} for a survey on stack-sorting.

\begin{figure}[ht]
\begin{center}
{\includegraphics[scale=0.8]{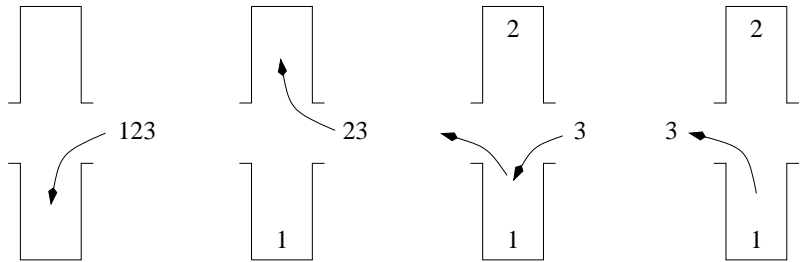}}
\caption{The permutation $312$ cannot be produced with a single stack, but can
  be produced with two parallel stacks as shown here. Note that
  several distinct  sequences of operations produce it.}  
\label{fig:two-stacks}
\end{center}
\end{figure}

Despite this activity,  most  problems related to the
rearranging power of  Knuth's 
switchyard networks have turned out to be very
hard. For networks consisting of two stacks, the case of parallel
stacks seems   a bit more manageable than that of two
stacks in series. For instance,  the list of
minimal permutations that cannot be 
produced by two parallel stacks has been known
since 1973~\cite{Pratt73}, but for two stacks in series 
it is only known to be infinite~\cite{Murphy02}. 
Similarly,  it has just been proved this year that one can decide in
polynomial time if a permutation can be sorted
 by two stacks in series~\cite{pierrot-pol},
while the corresponding result follows
from a 1971 paper for two parallel stacks~\cite{even-itai}
(see also~\cite{rosenstiehl}). 
However, the  questions ``How many 
 permutations of length
$n$  can be produced  by two stacks in series, or by two stacks in
parallel?'' have  remained equally open for forty years. 

 We  answer the latter question in this paper,
by giving a system of two functional equations that defines the
 generating function $\sum_n s_n t^n$, where $s_n$ is the number of permutations of length
 $n$ that can be produced with two parallel stacks.  Denton~\cite{denton}
has presented an algorithm for this problem whose complexity is $O(n^5 2^n)$
(for enumerating the sortable permutations of length $n$). The form of the
functional equations we obtain is such that we have, in principle, a polynomial time
algorithm though we have not tried to estimate its precise complexity.

 We also determine the exponential 
growth of the 
numbers $s_n$,  modulo some conjectures that deal with square lattice
walks confined to the quarter plane. These walks naturally encode the
admissible sequences of stack operations, in the same way as Dyck
paths do in the case of a single stack.
Our conjectures deal with the enumeration of quarter plane walks
counted by the
length and by the number of \emm corners, of certain types. Walks
confined to a quadrant have attracted a lot of attention in
the past decade (see
e.g.~\cite{bostan-kauers,bostan-raschel-salvy-excursions,mbm-kreweras,mbm-mishna,kurkova-raschel-nature,zeilberger-gessel}),
and we think that  our conjectures are 
 interesting quite independently of the original stack sorting question.

Finally we remark that in this metaphor of ``devices rearranging
input'' there are two common viewpoints. As described above, Knuth
  tended to view the input as arriving in fixed order $12\cdots n$ and 
then the question is ``How many permutations can be
produced?''. Tarjan on
the other hand tended to think of the
objective being to sort the input permutation, so the enumerative
question becomes ``How many permutations can be sorted?''. Of course,
passing to inverses, the two viewpoints are 
equivalent to one another: if a sequence of operations produces $\pi$
from the identity,
then the same sequence, applied to $\pi^{-1}$, produces the identity. We will be adopting the first viewpoint.

The outline of the paper is as follows. In Section~\ref{sec:canonical}, we
describe a set of \emm canonical operation sequences, such that
each permutation that can be produced using two parallel stacks is obtained
by exactly one canonical operation sequence. In Section~\ref{sec:enumeration}, we
establish a system of functional equations that characterises the
generating function of canonical sequences, and thus, of permutations that can be produced
 by two parallel stacks. The first equation in this system
defines the generating function of quarter plane walks, weighted by their length and
the number of North-West and East-South factors (also called
\emm corners,). In Section~\ref{sec:conj}, we state two  conjectures about this
\gf, and provide evidence for them by proving that they hold if we
only impose on walks a half plane restriction, or no restriction at
all. In Section~\ref{sec:asympt} we derive from our system of equations the
exponential growth of the number of permutations of length $n$ produced by two
parallel stacks, assuming the conjectures of Section~\ref{sec:conj}. We
conclude with a few comments on our results and conjectures in
Section~\ref{sec:final}.

\section{Canonical operation sequences}
\label{sec:canonical}
Throughout this paper we consider the action of two
stacks in parallel, and attempt to count  permutations of length $n$
that such a machine can 
produce. These permutations are said to be \emm achievable,. 
The primary issue in this question,
as opposed to the case of  a single stack, 
is that there is no one-to-one correspondence between
 sequences of operations
of the machine and achievable  permutations.  That is, 
several sequences  of operations may produce the same
permutation:
we then say that they are \emm equivalent,.
The most  obvious case  is that of the identity permutation 
of length $n$:  there are at least $2^n$ ways to produce it using two
stacks (alternate input and output operations,  allowing the
freedom of choice as to which stack to use --- in fact there are more
ways, since we can delay some
output steps if we choose the following input to be to the other
stack).

In this section we define a family of operation sequences, called \emm
canonical,, such that each operation sequence is equivalent to exactly
one canonical sequence. Canonical sequences  are thus in one-to-one
correspondence with achievable  \ps.

In order to proceed
further, we  present three equivalent descriptions of 
operation sequences.
Recall what the basic scenario is: input items numbered consecutively from
$1$ through $n$ are processed by two stacks, each of which is capable
of containing an arbitrarily large amount of data, but whose
operations are limited to input ($I$) and output ($O$); an
output operation produces the most recently entered item (i.e.~items
are processed in a last-in first-out fashion). Items are output as a
sequence, and after all the input has been processed and the stacks
emptied, the result is a permutation of the original input (Figure~\ref{fig:two-stacks}).

Operation sequences are  encoded as words over the alphabet
$\{I_1, I_2, O_1, O_2\}$, the subscripts determining which stack is
referred to. 
Note that both stacks must be empty at the end, and that one cannot output from an empty stack. This means that a word over
$\{I_1, I_2, O_1, O_2\}$  is an 
operation sequence if and only if 
it contains the same
number of $I_i$ as $O_i$ letters for $i = 1, 2$, and, in each
prefix, the number of $I_i$ letters is at least as great as the number
of $O_i$ letters for $i = 1, 2$. 
Equivalently, it is a \emph{shuffle} of two Dyck words, one on the letters $I_1$ and $O_1$, and 
the other on the letters $I_2$ and $O_2$.
The 
\emm type, of an operation sequence is the word on $\{I, O\}$
obtained  by deleting the subscripts on its letters.

We consider   square lattice walks which begin at
$(0,0)$ and use steps  $\EE = (1,0)$, $\NN = (0,1)$, $\WW =
(-1,0)$  and $\SS = (0,-1)$. Such a walk  is a  \emm loop, if it ends at
$(0,0)$. It is a \emm quarter plane walk, if it remains in the
quadrant $\{(x, y): x\geq 0, y \ge 0\}$.
There is an obvious one-to-one correspondence between
operation sequences and  quarter plane loops (replace
$I_1$ by $\EE$, $I_2$ by $\NN$, $O_1$ by $\WW$ and $O_2$ by $\SS$). Under this
correspondence, the $(x,y)$ coordinate reached after processing a
prefix of an 
 operation sequence simply records the number of
items in each stack at that point. The number of quarter plane
loops 
consisting of $2n$ steps is well known to be
$
C_n C_{n+1},
$
where $C_n= {2n \choose n}/(n+1) $is the $n^{\mbox{\scriptsize th}}$ Catalan number~\cite{guy-bijections,bernardi-tree-rooted}.
Observe that the type of an operation sequence corresponds to the
projection of the associated loop on the diagonal $x=y$.

\begin{figure}[ht]
\begin{center}
{\includegraphics[scale=0.8]{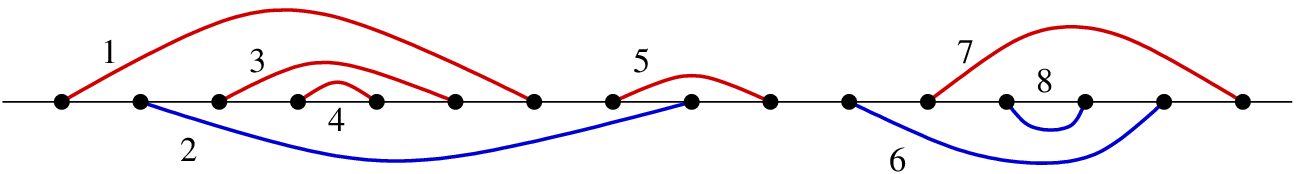}}

\vskip 8mm
{\includegraphics[scale=0.8]{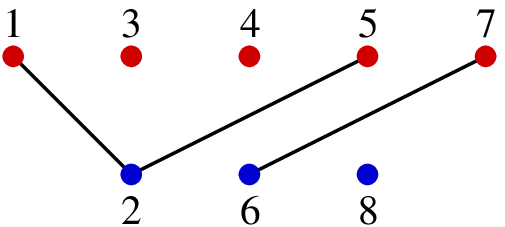}}
\caption{An illustration of the arch system associated with the operation sequence
  $I_1I_2I_1I_1O_1O_1O_1I_1O_2O_1I_2I_1I_2O_2O_2O_1$, and its
  associated graph. The arches are labelled using the left-to-right
  order of their left endpoint. This arch system has five connected
  components, and one left-right pair (between arches 2 and 5). 
The output permutation is $43125867$. 
} 
\label{fig:ex}
\end{center}
\end{figure}

A third perspective on these objects arises from considering them as
bi-coloured \emph{arch systems} (Figure~\ref{fig:ex}). This is the
 two-dimensional counterpart of the standard bijection between
Dyck paths and (one-coloured) arch systems~\cite[Exercise 6.19o]{stanley-vol2}. For an operation sequence of length $2n$, take 
$2n$ points arranged along a line, labelled from $1$ to $2n$. These
points represent time, that is, the $2n$ steps of the operation
sequence. For each item $k$ in $\{1, \ldots, n \}$, build an arch
joining $i$ to $j$ where $i$ (resp.~$j$) is the time at which $k$ is
input to  (resp.~output from) a stack. 
If $k$ is processed by the first
stack, the arch will be
above the line (and will be thought of as
\emph{red}), and otherwise  below the line (and thought of as
\emph{blue}).  Observe that 
the arches above the line do not cross, nor do the ones below the line
--- but there are no further
restrictions on such systems. The operation sequence is easily
recovered by scanning from left to right the $2n$ points of the arch
system, writing $I$ (resp.~$O$) if an arch opens (resp.~closes) at
this point, and $1$ (resp.~$2$) if this arch is above (resp.~below)
the line. Upon closing the supporting line into a cycle, an arch
system can also be seen  as a \emm rooted planar cubic map with a
distinguished Hamiltonian cycle,.
In this disguise, they were already considered by  Tutte~\cite{Tutte62}.

We use the following  simple terminology: 
\begin{itemize}
\item the \emph{first} arch is the one which has least left
endpoint; more generally, the $k^{\mbox{\scriptsize th}}$ arch 
 is the one with $k^{\mbox{\scriptsize th}}$ smallest left endpoint; 
\item an arch joining $i$ to $j$ \emph{moves} the element  $k$ of
  $\{1, \ldots, n\}$ that is input to a stack at time $i$ and output from at
  time $j$.
\end{itemize}
Observe that  the $k^{\mbox{\scriptsize th}}$ arch moves item $k$.

Our aim in this section is to describe a set of operation sequences in
bijection with achievable \ps. A first observation is that two
sequences obtained from one another by commuting pairs of adjacent
letters $I_1O_2$ or $I_2 O_1$ are equivalent. 
An operation sequence \emph{outputs eagerly} if it contains neither
$I_1 O_2$ nor $I_2 O_1$ as a factor. In other words, if the next item
of the permutation which it is producing is already present in one of
the two stacks (necessarily at the top of the stack), then it is
output immediately, before any other input  
(necessarily to the other stack) is carried out. Such sequences
correspond to walks in the plane containing no $\EE\SS$ or  $\NN\WW$  factor and
to arch systems in which the left endpoint of an arch of one colour is
never followed immediately by the right endpoint of an arch of the
opposite colour --- a configuration that we call a 
  \emph{left-right pair} (see Figure~\ref{fig:ex}). 

The following lemma is due to Pratt~\cite{Pratt73} who stated it in a
somewhat more general context and with different terminology.

\begin{Lemma}\label{lem:eager} If a permutation can be produced by some
 operation sequence, then it can be produced by one that
outputs eagerly.
\end{Lemma}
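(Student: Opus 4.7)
The plan is to exhibit an explicit rewriting procedure on the word representation of the operation sequence. Starting from any operation sequence $w$ producing the permutation $\pi$, I would repeatedly replace any occurrence of the factor $I_1 O_2$ by $O_2 I_1$, and any occurrence of $I_2 O_1$ by $O_1 I_2$, until no such factor remains. The resulting sequence outputs eagerly by definition, so everything boils down to checking three properties: (i) each rewrite produces a valid operation sequence, (ii) each rewrite preserves the output permutation $\pi$, and (iii) the rewriting procedure terminates.

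For (i) and (ii), consider a swap $I_1 O_2 \to O_2 I_1$ at positions $k,k+1$ (the other case is symmetric). The prefix of length $k-1$ is unchanged, and so is every suffix starting from position $k+2$: indeed, after the two steps, the contents of both stacks are the same before and after the swap, since $I_1$ touches only stack 1 and $O_2$ only stack 2. Hence all prefix balance conditions still hold, provided stack 2 is non-empty at position $k$ after the swap; but this is immediate, because stack 2 had the same contents at position $k$ in the original sequence (the $I_1$ step did not touch it), and those contents had to be non-empty in order for the original $O_2$ at position $k+1$ to be legal. The item pushed by $I_1$ and the item popped by $O_2$ are the same in both orderings, so the output permutation is unchanged as well.

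For termination, I would use the monovariant
\[
\Phi(w)\;=\;\sum_{\substack{1\le i\le 2n\\ w_i\in\{O_1,O_2\}}} i,
\]
i.e.\ the sum of the positions of the output letters. Each rewrite moves exactly one $O$-letter one position to the left and one $I$-letter one position to the right, so $\Phi$ strictly decreases by $1$ at every step. Since $\Phi$ is a nonnegative integer, the process must halt, and the terminal word has no $I_1O_2$ or $I_2O_1$ factor, hence outputs eagerly while still producing $\pi$.

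The argument is almost entirely bookkeeping; the only real point requiring any care is verifying in (i) that the swap does not cause an output from an empty stack, which I handled above by observing that the offending $I$-step touches the other stack and therefore does not influence the legality of the adjacent $O$-step. I do not anticipate a genuine obstacle beyond making this legality check explicit.
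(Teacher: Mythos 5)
Your proof is correct and follows essentially the same route as the paper: repeatedly swap the forbidden factors $I_1O_2$ and $I_2O_1$, noting the swap preserves validity and the output permutation. The only (inessential) difference is the termination argument — your positional-sum monovariant versus the paper's lexicographic order with $O_1<O_2<I_1<I_2$ — and both work equally well.
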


\begin{proof} 
Assign the ordering  $O_1 < O_2 < I_1 < I_2$ to the operation
letters. If an operation sequence 
$v = s I_1 O_2 t$ (respectively $s I_2 O_1 t$)   produces a permutation
$\pi$, then  $v' = s O_2 I_1 t$ (respectively $s O_1 I_2 t$) is also
an operation sequence and produces $\pi$. The
 sequence $v'$ is in each case lexicographically smaller than $v$ so
after a finite number of transformations of this type, an operation sequence generating $\pi$ is obtained that contains none of the forbidden factors.
\end{proof}

More simply we could just say that
``it can't hurt to output an element as 
soon as it is possible to do so'', which is essentially the content of
Pratt's observation.

A second source of ambiguity in operation sequences is the possibility
of reflecting  one or several (well chosen) arches in the horizontal
line. For instance, reflecting all arches 
gives an equivalent arch system. The same holds if we reflect one arch joining two consecutive points of
the line. Which groups of arches can one
thus reflect?

We say  that two arches of different colours  \emm cross, if they cross once
the one below the line is reflected. We sometimes
consider the arches as vertices of a graph, two arches being adjacent if they
cross (Figure~\ref{fig:ex}, bottom). This graph  is then bipartite.
We refer to its connected components
 as the \emph{(connected) 
  components} of the arch system, and call a non-empty arch system
\emph{connected} if its corresponding graph is.  In terms of operation
sequences, or equivalently quarter plane loops, this means   that no
proper factor is an operation sequence (this may be already clear to
the reader, but will be proved when enumerating connected arch
systems in Section~\ref{sec:enumeration}).  Connected components were also considered by Tutte in a
planar map context~\cite[Sec.~8]{Tutte62}.

\begin{Definition}
  An arch system or its corresponding operation sequence is \emph{standard} if  the
  first arch of each component is red (that is, above the line). It is
   \emph{canonical} if, in addition, it outputs eagerly.
\end{Definition}

\begin{figure}[ht]
\begin{center}
{\includegraphics[scale=0.8]{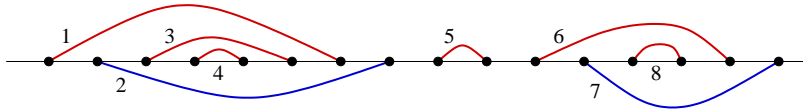}}
\caption{The canonical arch system that is equivalent to the arch system of
  Figure~\ref{fig:ex}. Note that the left-right pair created by edges
  2 and 5 in Figure~\ref{fig:ex} has disappeared (these edges
   do not cross any more). Also, the colours
  of the two rightmost components (edges 6, 7, 8) have 
  changed.  The  output permutation is still $43125867$.}
\label{fig:standard}
\end{center}
\end{figure}

The following lemma is illustrated by Figure~\ref{fig:standard}.
\begin{Lemma}\label{lem:canonical-existence} If a permutation $\pi$
is achievable, then it can be produced
  by a canonical  operation sequence.
\end{Lemma}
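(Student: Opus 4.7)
The plan is to invoke Lemma~\ref{lem:eager} to reduce to an eager operation sequence $S$ producing $\pi$, and then to make $S$ standard without destroying eagerness by ``flipping'' components whose first arch is blue. Concretely, decompose the arch system of $S$ into its connected components, and for every component whose leftmost arch lies below the line, reflect all of its arches through the horizontal line (equivalently, swap the colours of those arches). Let $S'$ be the resulting arch system. The claim is then that $S'$ is canonical and still produces~$\pi$.

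Four things must be verified: that $S'$ is still a valid arch system; that it produces the same permutation $\pi$; that it is still eager; and that the first arch of every connected component of $S'$ is red. The last point holds by construction. The preservation of the output permutation is immediate, since the permutation depends only on the left-to-right order of left and right endpoints of the arches, and flipping leaves all endpoints in place.

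For validity, the point is that flipping a component simply exchanges the two colour classes of its bipartite subgraph. A pair of arches inside a single flipped component is thus same-coloured in $S'$ if and only if it was same-coloured in $S$, and hence by validity of $S$ does not interlace. For a pair of same-coloured arches of $S'$ lying in two different components, the corresponding pair in $S$ was necessarily a non-edge of the bipartite graph---either it was already same-coloured, or it joined different components---so again the two arches do not interlace. An essentially identical analysis shows that the connected components of $S'$ coincide with those of $S$, which is implicitly used in the standardness statement.

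The main obstacle is preserving eagerness. The key observation is that in \emph{any} operation sequence, if an opening of some arch $B$ at time $t-1$ is immediately followed by the closing of a distinct arch $A\neq B$ at time $t$, then $A$ and $B$ must belong to different stacks: otherwise the pop at time $t$ would return the freshly pushed $B$. Consequently, in the eager sequence $S$, every adjacent (open, close) pair is in fact a \emph{trivial} arch joining two consecutive points. Such an arch interlaces no other arch and is therefore an isolated vertex of the bipartite graph, forming its own component; flipping either leaves such an arch alone or merely recolours it, which keeps it trivial, hence of the innocuous form $I_c O_c$. Since flipping does not change the linear order of events, no new adjacent (open, close) pair is created in $S'$, so $S'$ is still eager, and being also standard, canonical.
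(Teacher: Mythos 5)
Your proposal is correct and follows essentially the same route as the paper: reduce to an eager sequence via Lemma~\ref{lem:eager}, reflect the components whose first arch is blue, and check that the result is a valid, eager, standard sequence producing the same permutation. Your trivial-arch observation is a nice explicit justification of the eagerness-preservation step that the paper asserts without detail; otherwise the arguments coincide.
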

\begin{proof} By Lemma~\ref{lem:eager}, $\pi$ can be produced by a
  sequence that outputs eagerly. Let us take such a sequence $v$, and
  reflect the components that do not begin with a red arch. By
  definition of components, this does not create crossings between
  arches lying on the same side of the line, so that one obtains  another
  operation sequence $w$.  This sequence outputs eagerly since $v$ does.

It remains to prove that $w$ produces $\pi$.  But this is clear,
because the $k^{\mbox{\scriptsize th}}$ arch of $w$ moves item $k$ in and out exactly at the
  same time as the $k^{\mbox{\scriptsize th}}$ arch of $v$ does. In particular, items are
  output in the same order.  
\end{proof}

Let us now address the uniqueness of a canonical operation sequence for
each achievable \p.

\begin{Lemma} \label{lem:type}
If $v=v_1 \cdots v_{2n}$ and $w=w_1 \cdots w_{2n}$ are two 
equivalent operation sequences,
both of which output eagerly, then they have the same type. 
\end{Lemma}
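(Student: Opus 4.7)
The plan is to prove the stronger statement that for any eager operation sequence $v=v_1\cdots v_{2n}$ producing a given \p{} $\pi$, the number $i_t$ (resp.~$j_t$) of inputs (resp.~outputs) occurring in $v_1\cdots v_t$ depends only on $\pi$, for each $t\in\{0,1,\ldots,2n\}$. This immediately yields the lemma, since the type of $v$ has an $I$ at position $t$ precisely when $i_t=i_{t-1}+1$. I would argue by induction on $t$, starting from the trivial base $(i_0,j_0)=(0,0)$; the inductive step consists of showing that the type of $v_{t+1}$ (input or output) is forced by $\pi$ and $(i_t,j_t)$. Let $k=\pi_{j_t+1}$ denote the next required output element.

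The core observation, which is valid for \emph{any} operation sequence producing $\pi$ (not just eager ones), is that if $k$ has already been input at time $t$ (i.e.~$k\le i_t$), then $k$ must currently sit on top of one of the two stacks. Indeed, $k$ is in some stack since it has been input but not yet output; if any element lay above $k$, it would have to be output before $k$, contradicting that $k=\pi_{j_t+1}$ is the very next output. From this, the case $k>i_t$ is immediate: $k$ is not in either stack, and outputting any other stack top would produce a wrong element, so $v_{t+1}$ must be an input.

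The remaining case $k\le i_t$ is where eagerness enters. Let $s$ be the stack holding $k$ on top. I claim $v_{t+1}=O_s$. Suppose instead $v_{t+1}$ is an input. If it is $I_s$, then the fresh element $i_t+1$ buries $k$, contradicting that $k$ is the very next output. If it is $I_r$ with $r\ne s$, let $t'>t$ be the first time after $t$ at which an output occurs (such a $t'$ exists since $k$ is eventually output, and by the observation above this output is $O_s$ producing $k$). By the definition of $t'$, all intermediate steps $v_{t+1},\ldots,v_{t'-1}$ are inputs; and none of them can go to stack $s$, otherwise $k$ would be buried and could no longer be output at time $t'$. Hence they all go to $r$, and in particular $v_{t'-1}v_{t'}=I_rO_s$ with $r\ne s$, which is the forbidden factor ruled out by eagerness. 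This contradiction forces $v_{t+1}=O_s$ and closes the induction.

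The main obstacle is the last case above: one has to rule out simultaneously the two possible ways of \emph{not} outputting $k$ at time $t+1$, namely burying $k$ in stack $s$ versus postponing the output via inputs to the other stack. The first is ruled out by a direct order-of-output argument, the second by the eagerness constraint. Everything else is bookkeeping.
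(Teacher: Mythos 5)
Your proof is correct and follows essentially the same route as the paper's: an induction over positions showing that, given the prefix processed so far, eagerness forces an output exactly when the next required element of $\pi$ is already on top of a stack, and forces an input otherwise. The only differences are presentational --- you phrase it as the slightly stronger fact that the type is determined by $\pi$ alone rather than as a step-by-step comparison of $v$ and $w$, and you spell out in detail why the absence of $I_1O_2$ and $I_2O_1$ factors forces the immediate output, a point the paper treats as the verbal reformulation of eagerness given with its definition.
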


\begin{proof} 
Suppose that the letters $v_i$ and $w_i$ are of the same type, for
$1\le i \le j$, and let us prove that this is also true for $v_{j+1}$
and $w_{j+1}$. After the $j^{\mbox{\scriptsize th}}$ operation,  $v$ and $w$ have performed the same
number of input and output operations, and since they are equivalent,
the items that are 
currently in the stacks according to the $v$ sequence, are the same as
those currently in the stacks according to the $w$ sequence (though
their disposition between the stacks may differ).  The items that
have not been moved yet are also the same for both sequences. Since $v$ and $w$
output eagerly, if the next item to be output is already in the stacks
(for $v$ and $w$) it will be
output immediately by both operation sequences. If not,
both must perform an input operation at this point. In
either case, the  types of the next operation in $v$ and $w$
agree.
\end{proof}

\begin{Lemma} \label{lem:affect} If $v$ and $w$ are two 
equivalent operation
sequences having the same type,
then for each $i$, the $i^{\mbox{\scriptsize th}}$ operation in $v$
moves the same item as the $i^{\mbox{\scriptsize th}}$  operation in $w$.
\end{Lemma}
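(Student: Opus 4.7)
The plan is to argue directly from the definitions, without induction, exploiting that ``same type'' already synchronises where inputs happen and where outputs happen.

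Let $\pi$ denote the common permutation produced by $v$ and $w$. For any operation sequence $u = u_1 \cdots u_{2n}$, the $i$th operation $u_i$ moves an item determined solely by how many input and output letters appear among $u_1, \ldots, u_{i-1}$. Specifically, if $u_i$ is an input letter $I_\bullet$ and there are $k$ input letters among $u_1, \ldots, u_{i-1}$, then $u_i$ moves item $k+1$ (since items enter the system in order $1, 2, \ldots, n$). If instead $u_i$ is an output letter $O_\bullet$ and there are $j$ output letters among $u_1, \ldots, u_{i-1}$, then $u_i$ outputs the $(j+1)$st entry of the output permutation, which is $\pi_{j+1}$.

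Now fix $i$. Since $v$ and $w$ have the same type, $v_i$ and $w_i$ are either both input letters or both output letters, and moreover the number of input (resp.\ output) letters among the first $i-1$ positions of $v$ equals that for $w$. In the input case, the above shows that both $v_i$ and $w_i$ move the same item $k+1$. In the output case, both move the same item $\pi_{j+1}$, using the fact that $v$ and $w$ produce the same output permutation $\pi$ (which is the content of ``equivalent''). This handles both cases and proves the claim.

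There is no hard step here: once one unpacks what ``same type'' means (identical positions of inputs/outputs, hence identical counts of $I$'s and $O$'s in every prefix) and what ``equivalent'' means (identical output permutation $\pi$), the only observation needed is that inputs proceed through $1, 2, \ldots, n$ in order and outputs proceed through $\pi_1, \pi_2, \ldots, \pi_n$ in order, independently of the choice of stacks. The subscripts on $I$ and $O$ letters in $v$ and $w$ may still differ at position $i$ (they encode how the items are \emph{routed} through the two stacks), but the \emph{item} being moved at step $i$ is the same.
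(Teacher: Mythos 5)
Your proof is correct and is essentially the paper's own argument: both note that an input at position $i$ with $k$ prior inputs moves item $k+1$, an output with $j$ prior outputs moves $\pi_{j+1}$, and that ``same type'' plus ``same output permutation'' forces agreement at every position. No difference in substance.
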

\begin{proof}
Let $2n$ be the length of $v$ and $w$. Recall that the input of the
stack is the identity permutation $1 2 \cdots n$, and let us denote by $\pi_1
\cdots \pi_n$ the  permutation produced by $v$ (and $w$).  If the
$i^{\mbox{\scriptsize th}}$
operation has type $I$, and $k$ inputs have taken place before, then the
item moved by the $i^{\mbox{\scriptsize th}}$ operation is $k+1$. Similarly, if the
$i^{\mbox{\scriptsize th}}$ operation has type $O$, and $k$ outputs have taken place before, then the
item moved by the $i^{\mbox{\scriptsize th}}$ operation is
$\pi_{k+1}$. Hence $v$ and $w$ move the same item at each time.
\end{proof}

We can now conclude the discussion of this section.
\begin{Proposition}
\label{PROP:StacksAndArches}
Every achievable permutation $\pi$ is produced by a unique  canonical operation sequence.
\end{Proposition}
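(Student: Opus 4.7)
The plan is to exploit the three lemmas proved just above. Existence is already handled by Lemma~\ref{lem:canonical-existence}, so the task reduces to uniqueness. I take two canonical operation sequences $v$ and $w$ producing the same permutation $\pi$. Since both output eagerly, Lemma~\ref{lem:type} forces them to have the same type word over $\{I,O\}$; then Lemma~\ref{lem:affect} implies that at each time step the same item is moved in $v$ and in $w$. Reading this in the arch-system picture, it means that for every item $k$, its input time and its output time coincide in $v$ and in $w$. Hence the two arch systems have the \emph{same underlying arches as pairs of positions}; only the red/blue colouring of each arch is allowed to differ.

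The next step is to observe that the connected components of the crossing graph are actually determined by the arch pairs alone, not by the colouring. Indeed, in any valid arch system same-coloured arches above (resp.~below) the line cannot cross, so two arches whose intervals properly overlap must receive different colours; once they do, they form an edge of the graph, since overlapping arches of opposite colours cross after the lower one is reflected. Conversely, arches whose intervals are nested or disjoint never cross, no matter how they are coloured. Hence the edge set of the graph coincides with the interval-overlap graph of the arch pairs, and its connected components agree in $v$ and in $w$, together with the notion of ``first arch'' (smallest left endpoint) in each component.

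Finally, since any two proper $2$-colourings of a graph differ by flipping the colour classes on a union of connected components, the colouring of $w$ must be obtained from that of $v$ by reflecting some set of components. But reflecting a component changes the colour of every one of its arches, including the first one, which the standard condition requires to be red in both $v$ and $w$. Hence no component can be reflected, and $v=w$.

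The main obstacle is the middle step: one really needs to check that the crossing graph, nominally defined using the colouring since crossings are only declared between arches of different colours, is in fact determined by the arch pairs alone. Once this colour-independence of components is in place, together with the resulting colour-independence of ``first arch of each component'', the uniqueness argument collapses to the one-line observation about proper $2$-colourings of a graph.
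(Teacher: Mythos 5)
Your proof is correct and follows essentially the same route as the paper: existence via Lemma~\ref{lem:canonical-existence}, then uniqueness by combining Lemmas~\ref{lem:type} and~\ref{lem:affect} to fix the arches as position pairs, and finally the observation that the standardness condition pins down the colouring component by component. Your explicit verification that the crossing graph coincides with the interval-overlap graph (hence is independent of the colouring) is a point the paper leaves implicit, but it does not change the argument.
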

\begin{proof}
The existence of a canonical sequence producing $\pi$ is guaranteed by
Lemma~\ref{lem:canonical-existence}. 
Now suppose that two canonical 
 operation sequences $v$ and $w$ produce $\pi$. By
 Lemma~\ref{lem:type}, they have the same type, and by 
Lemma~\ref{lem:affect}, they move the same
element at time $i$, for each $i$. These two properties mean  that 
$v$ and $w$ only differ by the colouring of
some arches. However, once we 
colour the first arch in a component, the
colours of all 
the other arches of that component are fixed (because two 
arches that cross must have different colours). 
But $v$ and $w$ are standard, so that the first arch of each component
is red in $v$ and $w$. This implies that $v$ and $w$ coincide.
\end{proof}

It will be useful to define \emm primitive, objects. First, note
that the concatenation of two arch systems (or two operation
sequences)  $w_1$ and $w_2$ is an arch system $w$. Moreover, $w$ is
canonical if and only if $w_1$ and $w_2$ are canonical. We say that a
non-empty arch system (or 
operation sequence) is \emm primitive, if it cannot be written as a
non-trivial concatenation. This means that the corresponding quarter
plane walk only visits the origin of the lattice at the
beginning and at the end. Clearly, a 
connected arch system is primitive. An arch system
is  an  arbitrary sequence of primitive arch systems, and a similar
statement holds for canonical arch systems. The permutations produced by a primitive canonical arch system
are also said to be primitive.

{\bf Connection with results of Even \& Itai~\cite{even-itai}.} In 1971,  Even and
Itai gave the following  characterization
of permutations achievable with two parallel stacks. To a permutation $\pi=\pi_1 \cdots \pi_n$,
 associate a 
graph $G(\pi)$ with vertices $1,2, \ldots, n$ and an edge from $i$ to $j$
(with $i<j$) if there exists $k>j$ such that $kij$ is a subsequence of
$\pi$. Then $\pi$ is achievable if and only if this graph is
bicolourable. Moreover, Even \& Itai proved that in this case,  one
can produce $\pi$ by putting items out as soon as 
possible (eager output) and otherwise putting the first available item
from the input into the stack corresponding to its colour. This is
related to our results as follows: if one colours $G(\pi)$ in such a
way that the smallest element in 
each connected component is red, then the operation sequence
described by Even and Itai is exactly the canonical operation
sequence associated with $\pi$. Moreover, the graph associated with
this operation sequence (as in Figure~\ref{fig:ex}) coincides with
$G(\pi)$. Since an edge 
of $G(\pi)$ gives rise to a pair of crossing arches in \emm
any, operation sequence that produces $\pi$, this means  that canonical operation sequences 
minimise the number of arch crossings. 

\section{Exact enumeration}
\label{sec:enumeration}
In this section, we derive a system of functional equations that
characterises the length generating function $S(t)$ of achievable permutations 
by two stacks in parallel:
\[
S(t)= 1+t+2t^2+6t^3+23t^4+103t^5+513t^6+2760t^7+15741t^8+O(t^9).
\]
The first equation in this system characterises the generating function
$\Qa(a,u;x,y)$  of quarter plane walks,  when counted by the
length (variable $u$), the number of $\NW$ or $\ES$ corners (variable $a$), and the
coordinates of their endpoint (variables $x$ and $y$):
\begin{align*}
\Qa(a,u;x,y) = &1+(x+y)u + (2+2xy+x^2+y^2)u^2 \\ &+ \left( (a+4)(x+y) +
3x^2y +3xy^2+x^3+y^3\right)u^3 + O(u^4).
\end{align*}
By setting $x=y=0$, and replacing $u$ by $\sqrt u$, one obtains 
the generating function $Q(a,u)$ of quarter plane \emph{loops}, counted by
half-length ($u$) and $\NW$ or $\ES$ corners ($a$):
\[
Q(a,u) =1+2u+(8+2a)u^2+(44+24a+2a^2)u^3+O(u^4).
\]
Equivalently, $Q(a,u)$ counts arch systems by the number of 
arches $(u$) and the number of left-right pairs ($a$). The last
series involved in our
system  is the generating function of  standard connected arch systems, counted by the
number of arches ($v$) and the number of left-right pairs~($b$):
\[
C(b,v)= v+bv^2+ b(b+2)v^3+O(v^4).
\]
The reason why we have three different length  variables ($t$, $u$ and
$v$) and two different corner variables ($a$ and $b$) will be made clear below.

For a ring $\GK$, we denote by
$\GK[u]$ (resp.~$\GK[[u]]$) the ring of polynomials (resp.~formal power series) in
$u$ with coefficients in $\GK$. This notation is generalised to several
variables. For instance, 
$\Qa(a,u;x,y) \in \ns[a,x,y][[u]]$. 

\begin{Theorem}\label{thm:eqs}
The generating function  $\Qa(a,u;x,y)\equiv \Qa(x,y)$  of quarter plane walks is
characterised by the following equation: 
\begin{multline} \label{eq:Qa}
  (1-u(x+\bx+y+\by)-u^2(a-1)(x\by+y\bx))\Qa(x,y)=\\
1-u\by (1+ux(a-1))\Qa(x,0)-u\bx (1+uy(a-1))\Qa(0,y) ,
\end{multline}
where $\bx=1/x$ and $\by=1/y$.
The generating function for quarter plane loops is 
\[
\Q(a,u)=\Qa(a,\sqrt u; 0,0).
\]
  The generating function $C(b,v)$ for connected standard arch systems is
characterised by
 \begin{equation}
\label{FE:QC}
\Q(a,u) 
 = 
1 + 2C\left( 1 - \frac{1-a}{\Q} , \, u\Q^2\right),
\end{equation}
where $Q$ stands for $Q(a,u)$.
Finally, 
the generating function $S(t)\equiv S$ of permutations that can be produced by
two parallel stacks is characterised by
\begin{equation}
\label{FE:SC}
S(t) = 1 + C \left( 1 - \frac{1}{S}, \, tS^2 \right).
\end{equation}
\end{Theorem}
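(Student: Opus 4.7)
\textbf{Proof sketch of~\eqref{eq:Qa} and of the loop specialisation.}
The plan for~\eqref{eq:Qa} is the standard step-by-step construction, classifying quarter plane walks by their last step. Write $\Qa^\alpha(x,y)$ for the generating function of non-empty walks ending in step $\alpha$. Appending $\EE$ or $\NN$ is always legal and creates no $\NW$ or $\ES$ corner, so $\Qa^\EE = ux\,\Qa$ and $\Qa^\NN = uy\,\Qa$. Appending $\WW$ requires $x\ge 1$ (hence the subtraction $\Qa - \Qa(0,y)$) and creates an $\NW$ corner precisely when the previous step is $\NN$, which contributes an extra factor $a-1$ there; using $\Qa^\NN = uy\,\Qa$ one finds $\Qa^\WW = u\bx(1 + u(a-1)y)(\Qa - \Qa(0,y))$, and symmetrically for $\Qa^\SS$. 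Substituting into $\Qa = 1 + \Qa^\EE + \Qa^\NN + \Qa^\WW + \Qa^\SS$ and collecting the coefficient of $\Qa$ yields~\eqref{eq:Qa}. The identity $\Q(a,u) = \Qa(a,\sqrt{u};0,0)$ is then immediate: setting $x=y=0$ restricts to walks ending at the origin (loops), and replacing $u$ by $\sqrt{u}$ converts a length variable into a half-length (arch-count) variable.

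\textbf{Proof sketch of~\eqref{FE:QC} and~\eqref{FE:SC}.}
For both identities, the plan is to decompose a non-empty arch system by its \emph{first connected component} $K$ --- the component containing the first arch. Because $K$ is connected it is primitive, so the walk it defines on its own visits the origin only at its endpoints. If $K$ has $k$ arches with endpoints $1 = p_1 < p_2 < \dots < p_{2k}$, the rest of the loop fills $2k$ \emph{slots} (the $2k-1$ intervals strictly between consecutive $p_i$ and the tail after $p_{2k}$), each by an arbitrary sub-loop. Two combinatorial observations drive the derivation. First, an arch with both endpoints inside a single slot is nested inside the enclosing $K$-arches and hence cannot cross any of them, so arches in the slots lie in components disjoint from $K$ and the decomposition is bijective. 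Second, if positions $p$ and $p+1$ form a left-right pair, the two arches involved are interleaved and of opposite colour, so they cross and must lie in the same connected component; consequently every left-right pair of the full loop is either internal to one of the slot sub-loops, or is a local left-right pair of $K$ (in the standalone sense) whose slot happens to be empty. For~\eqref{FE:QC} the weight of a slot at a local-pair position is therefore $a\cdot 1 + 1\cdot (\Q-1) = \Q + a - 1$, while a generic slot weighs $\Q$; pulling out $\Q^{2k}$ turns a $K$ with $k$ arches and $j$ local pairs into the monomial $v^k b^j$ with $v = u\Q^2$ and $b = 1 - (1-a)/\Q$. Since a connected crossing graph admits exactly two valid bipartite colourings, summing over coloured connected $K$ gives twice the sum over standard ones, proving $\Q - 1 = 2 C(b,v)$. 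For~\eqref{FE:SC}, Proposition~\ref{PROP:StacksAndArches} identifies $S(t)$ with the generating function of canonical arch systems by number of arches, and the same decomposition under canonicity forces $K$ to be standard (no factor two), each slot sub-loop to be canonical (weight $S$ in place of $\Q$), and each local pair of $K$ to have a \emph{non-empty} slot (weight $S-1$ in place of $S$); the parallel computation yields $S - 1 = C(1-1/S,\,tS^2)$.

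\textbf{Main obstacle.}
The derivation of~\eqref{eq:Qa} and the loop specialisation are routine, so the substance of the theorem lies in~\eqref{FE:QC} and~\eqref{FE:SC}. I expect the main effort to go into justifying the first-component decomposition carefully: that arches in distinct slots, and arches in slots versus arches in $K$, can never cross, so that every non-empty loop is obtained uniquely in this way; and --- the key ingredient for the corner bookkeeping --- that the two arches of any left-right pair always cross and therefore lie in a single connected component, which rules out mixed left-right pairs between $K$ and a sub-loop. Once this combinatorial picture is secured, the substitutions $v$ and $b$ drop out by a direct weight computation, and the two functional equations become formally parallel, differing only in whether $K$ is coloured or standard, whether slots carry $\Q$ or $S$, and whether empty slots at local-pair positions are allowed (and weighted by $a$) or forbidden.
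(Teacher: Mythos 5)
Your proposal follows essentially the same route as the paper: the same last-step (corner-aware) construction yielding~\eqref{eq:Qa}, and the same decomposition of a loop by the connected component of its first arch, with insertion slots weighted $\Q$ (resp.\ $a+\Q-1$ at left-right pairs) giving~\eqref{FE:QC}, and the canonical analogue with weights $S$ and $S-1$ giving~\eqref{FE:SC}, including the factor $2$ for the two colourings of the connected core. The only point you leave untreated is the word ``characterised'': as in the paper, one should add the (routine) remark that each equation is equivalent to a recurrence determining the coefficients of $\Qa$, $C$ and $S$ respectively, so that the series are uniquely defined by them.
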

\begin{proof}
   The equation defining $\Qa(x,y)$ translates  a simple recursive
   description of quarter plane walks, according to which a walk is:
   \begin{itemize}
   \item either empty,
\item or obtained by adding an \EE\  (resp.~\NN) step at the end of
  another quarter plane   walk,
\item or obtained by adding an \ES\  (resp.~\NW) corner to a walk that does
  not end on the $x$- (resp.~$y$-) axis,
\item or obtained by adding a \SS\  (resp.~\WW) step to a walk that does
  not end on the $x$- (resp.~$y$-) axis and whose final step is not \EE\ 
  (resp.~\NN).
   \end{itemize}
Moreover, these four  cases are disjoint. We  now write the
contribution to $\Qa(x,y)$ of each case, using the following basic
remarks:
\begin{enumerate}
\item[--] the  generating function of walks ending with an \EE\ (resp.~\NN) step is
  $ux \Qa(x,y)$  (resp.~$uy \Qa(x,y)$),
\item[--] the generating function of walks ending on the $x$- (resp.~$y$-) axis is
  $\Qa(x,0)$ (resp.~$\Qa(0,y)$).
\end{enumerate}
 These two observations allow us to express
$\Qa(x,y)$ as follows:
 \begin{eqnarray*}
\Qa(x,y)&=& 1 + u (x+y) \Qa(x,y)\\
&+ &a u^2x \by
  \left(\Qa(x,y)-\Qa(x,0)\right)+ a u^2\bx y \left(\Qa(x,y)-\Qa(0,y)\right)  \\
&+&u\by \left( \Qa(x,y)-\Qa(x,0)-ux\Qa(x,y)+ux\Qa(x,0)  \right)\\
&& \hskip 20mm +u\bx \left( \Qa(x,y)-\Qa(0,y)-uy\Qa(x,y)+uy\Qa(0,y)  \right).
 \end{eqnarray*}
This gives the first equation of the proposition. It is equivalent to
a recurrence relation defining the coefficient of $u^n$ in $\Q(a,u)$, and
thus characterises this series.

\medskip
Let us now relate the series $Q(a,u)=\Qa(a,\sqrt u; 0,0)$ and $C(b,v)$.
Let $w$ be a non-empty quarter plane loop, or equivalently an arch
system. The first arch of $w$ belongs to some connected component $c$, which
may be standard or not. The arches of $w$ that do not belong to $c$ do
not cross the edges of $c$.   So the whole system $w$ is
obtained by inserting an arch system between each pair of
adjacent points of $c$, and after the last point of $c$
(Figure~\ref{fig:insertion}). If $c$ has $n$ arches then there are $2n$
positions to make such insertions. Ignoring the 
corner parameter for
the moment we obtain:  
\[
\Q(1,u) = 1 + 2C(1, uQ^2),
\] 
where $Q$ stands for $Q(1,u)$. On the right-hand side, the factor $2$
corresponds to the choice of colour 
for the first arch (since the series $C$ only counts \emm standard,
connected arch systems), the $u$ enumerates the arches of $c$, and the 
$Q^2$ allows for  the inserted arch systems. It remains
to account for 
the number of
left-right pairs. If an arch system $w_1$ is inserted between 
the endpoints of two arches of $c$ that do not form a left-right pair
in $c$, then the only 
left-right pairs it creates are those that are already present in
$w_1$. If $w_1$ is non-empty and inserted in a left-right pair of $c$,
then  it destroys that left-right pair,
but adds any that it might contain itself. Hence, a connected arch
system $c$ with $n$ arches and $k$ left-right pairs contributes $v^n b^k$
in $C(b,v)$ and gives rise, by insertion of $2n$ arch systems, to a set of
arch systems counted by
$$
 u^n Q(a,u)^{2n-k} \left( a+ \left(Q(a,u)-1\right)\right)^k.
$$
The term $Q(a,u)^{2n-k}$ corresponds to insertions in places that are
not left-right pairs, while each left-right pair gives rise to a term
$a$ (insertion of an empty system $w_1$) and a term $Q(a,u)-1$
(insertion of a non-empty $w_1$). This gives~\eqref{FE:QC} by summing
over all possible values of $n$ and $k$ and multiplying by 2 (since
$c$ is not necessarily standard).

In order to prove that this equation  uniquely defines
$C(b,v)=\sum_{k,n}c_{k,n}b^k v^n$, it
suffices to extract from~\eqref{FE:QC} the coefficient of $a^k u^n$:
this gives an expression of $c_{k,n}$ in terms of the coefficients $c_{\ell,
  m}$ for $m<n$ and of the coefficients of $Q$.

\begin{figure}[ht]
\begin{center}
{\includegraphics[scale=0.7]{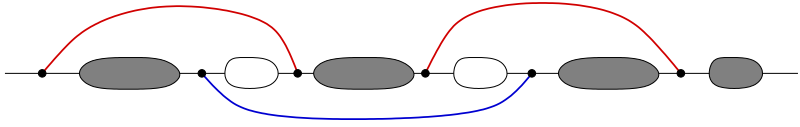}}
\caption{The structure of an arch system: a connected system $c$ with $n$
  arches (here, $n=3$), in which $2n$ arbitrary arch systems are inserted. Here, $c$
  has two left-right pairs. The arch systems that are inserted there
  (shown in white) destroy these left-right pairs, unless they are empty.} 
\label{fig:insertion}
\end{center}
\end{figure}

\medskip

Using the same argument we can finally derive a functional equation for
the generating function $S(t)$ of achievable permutations. Indeed, Proposition
\ref{PROP:StacksAndArches} tells us that they are in bijection with
canonical arch systems, that is, with standard arch systems having no
left-right pairs. Such systems $w$ are 
obtained  from a  (standard) connected system $c$ as  before, but all
inserted arch systems must be canonical;
moreover, one cannot insert an empty system in a left-right pair of
$c$. Hence a connected arch
system $c$ with $n$ arches and $k$ left-right pairs  gives rise to a set of
canonical arch systems  counted by
$$
 t^n S(t)^{2n-k} \left(S(t)-1\right)^k.
$$
This gives~\eqref{FE:SC} by summing over all possible values of $n$
and $k$. This equation is equivalent to a recurrence relation defining
the coefficient of $t^n$ in $S(t)$, and thus characterises the series $S(t)$.
\end{proof}

It will be convenient to relate the functional equation~\eqref{FE:QC}
to a compositional inversion in the ring $\qs[[a,t]]$ of bivariate
power series with rational coefficients. 

\begin{Proposition}
  Let $\Q(a,u)$ and $C(b,v)$ be defined as above,
  and define the bivariate series $ A, U, B $ and $V$ as follows:
\beq\label{UBAT}
\left\{ 
\begin{array}{lllllllllll}
A(b,v) &=&\displaystyle 1 + (1+2C(b,v))(b-1), 
\\
\\
U(b,v) &=& \displaystyle \frac{v}{(1+2C(b,v))^2}, 
\end{array}
\right. \hskip 8mm
\left\{ 
\begin{array}{lllllllllll}
B(a,u) &=& \displaystyle  1 - \frac{1-a}{Q(a,u)}. 
\\
\\
V(a,u) &=& \displaystyle u Q(a,u)^2.
\end{array}
\right.
\eeq
Then it follows from~\eqref{FE:QC} that 
$$
A(B(a,u),V(a,u))=a \quad \hbox{and} \quad U(B(a,u),V(a,u))=u,
$$
so that, by inversion in $\qs[[a,u]]$,
\beq\label{BAT}
B( A(b,v),U(b,v))=b \quad \hbox{and} \quad V(A(b,v),U(b,v))=v.
\eeq
 The identity  $B(A(b,v), U(b,v))=b $ can be rewritten as
\beq\label{CfromQ}
Q(A(b,v),U(b,v)) = 1 + 2C(b,v).
\eeq
\end{Proposition}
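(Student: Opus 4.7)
The plan is to establish the first pair of identities $A(B,V)=a$ and $U(B,V)=u$ by direct substitution using~\eqref{FE:QC}, then deduce the reverse pair by formal compositional inversion, and finally rewrite $B(A,U)=b$ in the form~\eqref{CfromQ}. The key observation is that equation~\eqref{FE:QC}, once its arguments are relabelled through the definitions of $B$ and $V$ in~\eqref{UBAT}, reads
\[
Q(a,u) \;=\; 1 + 2\,C\bigl(B(a,u),V(a,u)\bigr).
\]

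From this, together with the identity $B(a,u)-1 = -(1-a)/Q(a,u)$, substitution in $A(b,v) = 1 + (1+2C(b,v))(b-1)$ gives
\[
A(B,V) \;=\; 1 + Q(a,u)\,\bigl(B(a,u)-1\bigr) \;=\; 1 - (1-a) \;=\; a,
\]
and substitution in $U(b,v) = v/(1+2C(b,v))^2$ gives $U(B,V) = u\,Q(a,u)^2/Q(a,u)^2 = u$. To pass to the reverse identities, I would appeal to the standard formal compositional inversion theorem in $\qs[[a,u]]$: a pair of bivariate power series vanishing at the origin admits a unique two-sided compositional inverse as soon as its Jacobian at the origin is non-singular. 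Here $Q(a,u) = 1 + 2u + O((a,u)^2)$ yields $B(a,u) = a + 2(1-a)u + O(u^2)$ and $V(a,u) = u + O(u^2)$, so $B(0,0)=V(0,0)=0$ and the Jacobian of $(B,V)$ at the origin is upper-triangular with unit diagonal, hence invertible. The previous computation exhibits $(A,U)$ as a left compositional inverse of $(B,V)$, so by uniqueness $(A,U)$ is also the right inverse, which is exactly~\eqref{BAT}.

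Finally, to rewrite $B(A,U) = b$ as~\eqref{CfromQ}, I clear denominators in the defining relation for $B$, obtaining $Q(a,u)\,(1-B(a,u)) = 1-a$. Evaluating at $(a,u) = (A(b,v),U(b,v))$ and using $B(A,U) = b$ gives $Q(A,U)\,(1-b) = 1 - A(b,v)$, while the definition of $A$ in~\eqref{UBAT} reads $1 - A(b,v) = (1+2C(b,v))(1-b)$. Since $1-b$ is a unit in $\qs[[b,v]]$, dividing yields~\eqref{CfromQ}. The only mildly delicate step in the argument is the Jacobian verification needed to justify invoking formal inversion; every other step is a transparent substitution guided by~\eqref{FE:QC}.
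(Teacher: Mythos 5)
Your proof is correct and follows exactly the route the paper intends: the paper dismisses this as ``an elementary calculation'', and your substitutions using~\eqref{FE:QC} to get $A(B,V)=a$, $U(B,V)=u$, the appeal to formal inversion in $\qs[[a,u]]$, and the unit-$(1-b)$ cancellation to obtain~\eqref{CfromQ} are precisely that calculation spelled out. The Jacobian check (with $B=a+2(1-a)u+O(u^2)$, $V=u+O(u^2)$) correctly justifies the inversion step that the paper takes for granted.
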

The proof is an elementary calculation.
A consequence is that we can eliminate the series $C(b,v)$ from the system of
Theorem~{\rm\ref{thm:eqs}}, and thus obtain an equation defining $S(t)$ in terms
of $\Q(a,u)$. This relation looks nicer when we introduce the
generating function $\Sp(t)$ that counts \emm primitive, canonical operation sequences,
defined at the end of Section~\ref{sec:canonical}. 

\begin{Corollary}\label{cor:QS}
  The series  $\Q(a,u)$, $S(t)\equiv S$ and $\Sp(t)\equiv \Sp$ that
  count  quarter plane 
  loops, achievable permutations  and primitive
 achievable permutations respectively, are related by
 $$
S=\frac 1 {1-\Sp}
$$
and
$$
     \Q\left(-\Sp ,\frac t {(1+\Sp )^2}\right)= \frac{1+\Sp }{1-\Sp }.
$$
The second equation characterises $\Sp$ in terms of $Q$.
\end{Corollary}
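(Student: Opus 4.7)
The plan is to derive both identities from the structural decomposition established in Section~\ref{sec:canonical} together with the inversion relation~\eqref{CfromQ}.

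First I would handle the identity $S=1/(1-\Sp)$. By the end of Section~\ref{sec:canonical}, every canonical arch system is an ordered concatenation of primitive canonical arch systems, and this decomposition is unique. Via Proposition~\ref{PROP:StacksAndArches}, the same holds for achievable permutations: each one is uniquely the concatenation of its primitive canonical factors. The length generating function of a sequence of primitive objects is the geometric series $\sum_{k\ge 0}\Sp(t)^k$, giving $S=1/(1-\Sp)$. In particular $\Sp$ has no constant term, so $1+\Sp$ and $1-\Sp$ are units in $\qs[[t]]$ and all manipulations below take place in $\qs[[t]]$.

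Next I would plug the right pair $(b,v)$ into the inversion identity~\eqref{CfromQ}. The equation~\eqref{FE:SC} can be rewritten as $C(1-1/S,\,tS^2)=S-1$. Take $b=1-1/S$ and $v=tS^2$; then by~\eqref{CfromQ},
\[
Q(A(b,v),U(b,v))=1+2C(b,v)=2S-1.
\]
It remains to compute $A(b,v)$, $U(b,v)$ and $2S-1$ in terms of $\Sp$ using $S=1/(1-\Sp)$. Using $1/S=1-\Sp$ one finds $b-1=-1/S=\Sp-1$ and $1+2C(b,v)=2S-1=(1+\Sp)/(1-\Sp)$. Hence
\[
A(b,v)=1+(2S-1)(b-1)=1-(2S-1)/S=-1+1/S=-\Sp,
\]
and
\[
U(b,v)=\frac{tS^2}{(2S-1)^2}=t\cdot\frac{1/(1-\Sp)^2}{(1+\Sp)^2/(1-\Sp)^2}=\frac{t}{(1+\Sp)^2}.
\]
Substituting into $Q(A,U)=2S-1$ yields exactly
\[
Q\!\left(-\Sp,\,\frac{t}{(1+\Sp)^2}\right)=\frac{1+\Sp}{1-\Sp},
\]
which is the claimed identity.

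Finally I would verify that this second equation indeed characterises $\Sp$ given $Q$. Write $\Sp=\sum_{n\ge 1}s^\bullet_n t^n$ and expand both sides as power series in $t$. Since $\Sp(0)=0$, the substitution $u\mapsto t/(1+\Sp)^2=t+O(t^2)$ is well defined in $\qs[[t]]$, and a straightforward coefficient extraction at $t^n$ on each side expresses $s^\bullet_n$ as a polynomial in the previously determined coefficients $s^\bullet_1,\dots,s^\bullet_{n-1}$ together with the coefficients of $Q$. This inductively determines $\Sp$ uniquely. I expect the only place requiring any care is this last step, namely checking that the $t^n$-coefficient on the left genuinely contains $s^\bullet_n$ linearly (with an invertible coefficient) so that the recurrence is solvable; this follows because $Q(0,u)$ begins $1+2u+\cdots$ and the substitution introduces $\Sp$ non-trivially through both arguments.
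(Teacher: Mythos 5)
Your proof is correct and takes essentially the same route as the paper: the factorisation of canonical arch systems into primitive ones gives $S=1/(1-\Sp)$, and the second identity comes from specialising \eqref{CfromQ} at $b=1-1/S$, $v=tS^2$ and using \eqref{FE:SC} to evaluate $C$, $A$ and $U$ exactly as you do. One small slip in your closing remark: at order $t^n$ the left-hand side $Q\left(-\Sp,\,t/(1+\Sp)^2\right)$ involves only $s^\bullet_1,\dots,s^\bullet_{n-1}$ (since $\Sp$ enters either through $u$, hence multiplied by at least one extra power of $t$, or through $a$, which only appears attached to $u^2$), and it is the right-hand side $(1+\Sp)/(1-\Sp)$ whose $t^n$-coefficient equals $2s^\bullet_n$ plus terms in earlier coefficients --- which is precisely why the recurrence determines $\Sp$ uniquely.
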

\begin{proof}
 The first identity is a direct consequence of the definition of
 primitive  achievable  \ps. For the second one,
 specialise~\eqref{CfromQ} to $b=1-1/S$ and  $v=tS^2$,  
and use~\eqref{FE:SC}.
\end{proof}
The equation above is the most efficient way we have found to compute
the coefficients of $\Sp$ and $S$.

\section{Corners in square lattice walks}
\label{sec:conj}

Our analysis of the asymptotic behaviour of the number of 
achievable permutations of length $n$, performed in the next section,  relies on three
conjectures which have intrinsic combinatorial interest.   

\begin{Conjecture}\label{conj:a+1}
The series $\Q(a,u)$ is $(a+1)${\rm -positive}. That is, it can be expanded as
\[
\Q(a,u)= \sum_{n\ge 0} u^n P_n(a+1),
\]
where 
$P_n(x) \in \ns[x]$.
\end{Conjecture}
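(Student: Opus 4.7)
The plan is to attack the conjecture from both a combinatorial and a generating-function angle. First I would reformulate $(a+1)$-positivity in its inclusion-exclusion form: writing $P_n(a)=\sum_k f(n,k)\,a^k$, where $f(n,k)$ is the number of quarter plane loops of half-length $n$ with exactly $k$ $\NW$ or $\ES$ corners (equivalently, arch systems with $k$ left-right pairs), the conjecture becomes
\[
c_j(n)\;:=\;\sum_{k\ge j}(-1)^{k-j}\binom{k}{j}f(n,k)\;\ge\;0\qquad\text{for all }n,j\ge 0.
\]
Small cases $P_2(a)=8+2a$ and $P_3(a)=44+24a+2a^2$ yield shifted coefficients $(6,2)$ and $(22,20,2)$, confirming the conjecture and suggesting the stronger property that each $P_n$ is real-rooted with all roots $\le -1$; this stronger statement, if true, would be a more tractable target because real-rootedness often propagates through linear recursions.

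The main combinatorial strategy would be to exhibit a sign-reversing involution $\iota$ on pairs $(L,S)$, where $L$ is a quarter plane loop and $S$ is a $j$-subset of its $\NW$ or $\ES$ corners, carrying sign $(-1)^{|\text{corners of }L\setminus S|}$, whose fixed points are naturally positively weighted. The most promising recipe is a left-to-right scan: at the first corner not in $S$, perform a local modification — straightening the corner and recording the flipped step elsewhere, or swapping the corner with a nearby ``hidden'' configuration. To calibrate the right local move, I would first reverse-engineer the proofs of the half-plane and unrestricted cases from Section~\ref{sec:conj} and attempt to localise them: an involution that works whenever the loop stays strictly in the interior of the quadrant would reduce the problem to controlling a boundary contribution, which could then be handled by a reflection trick along the axes.

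A parallel attack uses the functional equation~\eqref{eq:Qa} directly. After substituting $a=X-1$, writing $\Qa(X-1,u;x,y)=\sum_{n,j}q_{n,j}(x,y)\,u^n X^j$ and matching coefficients yields a triangular recursion on the bivariate polynomials $q_{n,j}(x,y)$; one then wants to show by induction on $n$ that each $q_{n,j}(x,y)\in\ns[x,y]$, which after $x=y=0$ gives the conjecture. The hard part, and the reason the conjecture has remained open, is that the kernel after the substitution contains the factor $(X-2)$, which injects negative terms into the recursion, so a naive induction cannot absorb them. The likely main obstacle is therefore to find the right strengthening of the induction hypothesis — some pointwise inequality between the $q_{n,j}$ that both encodes quadrant confinement and survives the step through the kernel, analogous to the cancellations that make the half-plane case tractable but that quadrant walks are notorious for resisting.
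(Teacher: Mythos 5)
There is a genuine gap here, and it is the most basic one: the statement you are addressing is Conjecture~\ref{conj:a+1}, which the paper does not prove at all --- it is left open, supported only by computer verification up to half-length $n=100$ via~\eqref{eq:Qa} and by proofs of weaker or analogous statements (the special values $a=1$ and $a=-1$ in Proposition~\ref{prop:Qm1}, and $(a+1)$-positivity for \emph{unconfined} loops and \emph{half-plane} loops in Propositions~\ref{prop:a+1-R} and~\ref{prop:a+1-H}). Your text is likewise not a proof: it is a research plan. Both of your proposed routes end by naming, but not resolving, the decisive difficulty --- in the involution approach you have not specified the local move at a corner nor how to handle the boundary of the quadrant, and in the kernel approach you explicitly state that the factor $a-1=X-2$ injects negative terms and that the needed strengthening of the induction hypothesis is unknown. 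So nothing in the proposal establishes the nonnegativity of the coefficients of $(a+1)^k$; the conjecture remains exactly as open after your argument as before it.

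One concrete correction to the plan itself: you propose to ``reverse-engineer'' the half-plane and unrestricted cases into a local sign-reversing involution, but the paper's proofs of those cases are not combinatorial and contain no involution to localise. The unconfined case is proved algebraically, by showing that each factor of the closed form~\eqref{Aast} is $(a+1)$-positive using the algebraic series $T$ of~\eqref{eq:T}, and the half-plane case is then deduced by a term-by-term domination argument (the extra factor $\frac{1}{j+1}$ in Proposition~\ref{prop:a+1-H}, via the decomposition of Proposition~\ref{prop:ABC}); no analogous closed form or domination is available in the quadrant, which is precisely why the conjecture is hard. Your inclusion--exclusion reformulation of $(a+1)$-positivity and the observation about the kernel are correct, and the stronger real-rootedness guess is consistent with $P_2$ and $P_3$, but these are starting points, not results; if you want partial credit toward the conjecture, the realistic targets suggested by the paper are the refined Properties $(P_1)$ and $(P_2)$ of Section~\ref{subsec:more a+1}, which are also unproven.
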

Of course, it is combinatorially clear that $\Q(a,u)$ is a power
series in $u$ with coefficients in $\ns[a]$, and hence in
$\zs[a+1]$. What is not clear is why the coefficient of $(a+1)^k$
should be non-negative. This has been checked on a computer up to
half-length $n=100$, using the functional equation~\eqref{eq:Qa}.

Much of our analysis depends on being able to estimate the radius of convergence of various bivariate series as a function of one of the variables. In this context the name of the other variable is not important (and indeed we will most often be subsituting more or less complicated expressions for it) and so we generally suppress it, using a $\cdot$ instead, as in the conjecture below.

\begin{Conjecture}\label{prop:Q0}
For $a\ge -1$, the radius of convergence of 
$\Q(a,\cdot)$ is
\beq\label{radius}
\rho_\Q(a)= \left\{
\begin{array}{ll}
\displaystyle \frac 1 {(2+\sqrt{2+2a})^2} &\hbox{if } a\ge -1/2,
\\
\\
\displaystyle -\frac{{a}}{ 2 (a-1)^2} & \hbox{if } a \in [-1,-1/2].
\end{array} \right.
\eeq
\end{Conjecture}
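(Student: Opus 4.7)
My plan is to locate the dominant singularity of $u \mapsto \Q(a,u)$ by separating a \emph{bulk regime} for $a \ge -1/2$, where a $4$-state transfer-matrix eigenvalue dictates the growth, from a \emph{boundary regime} for $-1 \le a \le -1/2$, where a branch point of the kernel along the anti-diagonal $y = 1/x$ takes over.

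For the bulk regime I would first analyse the unrestricted signed walk on $\zs^2$ with steps $\EE,\NN,\WW,\SS$ and weight $a$ on each $\NW$ or $\ES$ corner, via a $4\times 4$ transfer matrix $M(u)$ indexed by the last step direction. Its characteristic polynomial factors as $\lambda^2(\lambda^2 - 4\lambda + 2-2a)$, so the spectral radius is $\lambda(a) := 2+\sqrt{2+2a}$ for $a\ge -1$, and unrestricted signed loops grow at rate $\lambda(a)^{2n}$ per half-length. For $a \ge 0$ this gives $\rho_\Q(a)\ge 1/\lambda(a)^2$ by coefficientwise domination; for $a \in [-1/2,0]$ signed cancellations in the quadrant need not be stronger than in the bulk, so the bound has to be argued via~\eqref{eq:Qa}, by a saddle-point analysis of the coefficient-extraction integral for $\Qa(a,\sqrt u;0,0)$ near $(x,y) = (1,1)$, which yields the same rate. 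For the matching lower bound I would use the Denisov--Wachtel framework for zero-drift walks in cones: concatenating an escape-from-the-axes block, a long bulk excursion sampling the stationary eigenvector of $M$, and a mirror return block exhibits quarter-plane loops growing at the bulk rate, so $\rho_\Q(a) = 1/(2+\sqrt{2+2a})^2$ throughout $a\ge -1/2$.

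For the boundary regime, restricting the kernel to the anti-diagonal yields $K(x, 1/x) = 1 - 2u(x+1/x) - u^2(a-1)\bigl((x+1/x)^2 - 2\bigr)$, a quadratic in $s := x + 1/x$ whose discriminant $4u^2\bigl(a + 2u^2(a-1)^2\bigr)$ vanishes precisely at $u^2 = -a/(2(a-1)^2)$, matching the conjectured radius exactly. The plan is to show that this branch point of the algebraic kernel roots propagates, via the standard kernel-method substitution (feeding the small root in $x$ into~\eqref{eq:Qa}), to a genuine singularity of the boundary traces $\Qa(x,0)$ and $\Qa(0,y)$, and hence of $\Q(a,u) = \Qa(a,\sqrt u;0,0)$, whenever $a \le -1/2$. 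The two candidate formulas agree in value \emph{and} first derivative at $a=-1/2$ (both equal $1/9$ with slope $-2/27$), so the transition between regimes is tangential and the two branches glue together smoothly.

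The main obstacle is that the step set $\{\EE,\NN,\WW,\SS,\NW,\ES\}$ has an \emph{infinite} group in the Bousquet-M\'elou--Mishna classification, so no finite-group decoupling of~\eqref{eq:Qa} is available to produce $\Qa(x,0)$ and $\Qa(0,y)$ in closed form. One must therefore identify \emph{which} kernel branch points actually become singularities of these boundary traces, rather than being cancelled by algebraic identities between them, and in particular explain why the anti-diagonal branch point at $u^2 = -a/(2(a-1)^2)$ takes over only for $a\le -1/2$ although it exists as a kernel branch point for every $a \le 0$. This analytic step, combined with the tangential matching at $a = -1/2$ that demands uniform rather than pointwise estimates across the transition, is where the hard work of the proof lies.
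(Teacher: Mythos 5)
First, a point of orientation: the statement you are proving is stated in the paper as Conjecture~\ref{prop:Q0} and is left open there. The paper only establishes partial evidence: the values $\rho_Q(1)=1/16$ and $\rho_Q(-1)=1/8$ from the explicit formulas of Proposition~\ref{prop:Qm1}, the lower bound~\eqref{radius-bound} for $a\ge 0$ by domination by unconfined walks (Proposition~\ref{prop:radiusQ}) --- which is exactly your ``coefficientwise domination'' step --- and the fact that the \emph{same} formula~\eqref{radius} gives the radius for unconfined loops and for half-plane loops (Propositions~\ref{prop:radius-general} and~\ref{prop:a+1-H}). So there is no proof in the paper to compare against, and your proposal would have to stand as a complete argument on its own. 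It does not: it is a programme whose two load-bearing steps are precisely the open parts.

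Concretely: (i) your lower bound on the quarter-plane growth (equivalently, the upper bound $\rho_Q(a)\le 1/(2+\sqrt{2+2a})^2$) is delegated to a Denisov--Wachtel concatenation argument, but that framework concerns random walks with a genuine (non-negative, i.i.d.) step distribution; here the weight $a$ sits on \emph{corners}, so even for $a\ge 0$ the model is Markov-modulated rather than i.i.d., and for $a\in[-1/2,0)$ the weights are signed, where no such probabilistic comparison is available. The paper cites the Denisov--Wachtel result only for unweighted loops, as a heuristic analogy. (ii) Your bulk picture is also misleading in the signed regime: the transfer-matrix spectral radius $2+\sqrt{2+2a}$ does \emph{not} control the growth of signed loops on all of $[-1,\infty)$ --- Proposition~\ref{prop:radius-general} shows that already unconfined loops switch to the radius $-a/(2(a-1)^2)$ on $[-1,-1/2]$, i.e.\ they grow \emph{faster} than $\lambda(a)^{2n}$ there, so any argument on $[-1/2,0)$ must quantify cancellations that the eigenvalue argument cannot see; ``a saddle-point analysis of the coefficient extraction'' is not available in closed form because, as you note yourself, the group of the weighted model is infinite and $\Qa(x,0)$, $\Qa(0,y)$ are unknown. (iii) For $a\in[-1,-1/2]$ your anti-diagonal branch point $u^2=-a/(2(a-1)^2)$ is a correct and suggestive computation (it also appears among the candidate singularities~\eqref{rho-cand} for unconfined loops, and your tangency check at $a=-1/2$ is right), but you explicitly concede that you cannot show this kernel branch point survives as a singularity of $Q(a,\cdot)$, nor why it dominates only for $a\le -1/2$ although it exists for all $a\le 0$. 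That is exactly the conjecture, restated; so the proposal, while a reasonable research plan consistent with the paper's evidence, contains no proof of the statement.
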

\begin{Conjecture}\label{conj:conv}
The series $Q'_2(a, u):= \frac{\partial Q}{\partial u}(a,u)$ is
convergent at $u=\rho_Q(a)$ for $a\ge -1/3$.
\end{Conjecture}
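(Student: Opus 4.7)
The plan is to reformulate the claim as a statement about the nature of the dominant singularity of $\Q(a,u)$ and then to analyse that singularity through the functional equation \eqref{eq:Qa}.

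First, I observe that if $\Q(a,u)$ admits at $u=\rho_\Q(a)$ a singular expansion of the form
\[
\Q(a,u)=g_{0}(a,u)+g_{1}(a,u)\bigl(1-u/\rho_\Q(a)\bigr)^{\alpha(a)}+\cdots,
\]
with $g_{0},g_{1}$ analytic at $\rho_\Q(a)$ and $\alpha(a)>1$ (possibly up to logarithmic corrections), then term-by-term differentiation shows that $\partial \Q/\partial u$ extends continuously to $u=\rho_\Q(a)$. Thus the conjecture reduces to proving $\alpha(a)>1$ for all $a\ge -1/3$. As a sanity check, at $a=1$ one recovers the unweighted simple walk in the quarter plane, with $\Q(1,u)=\sum_{n\ge 0}C_{n}C_{n+1}u^{n}$ and $n^{-3}$-type asymptotics, giving a singularity of the form $(1-16u)^{2}\log(1-16u)$, whose derivative indeed vanishes at $u=1/16$.

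Second, I would apply the kernel method to \eqref{eq:Qa}. After clearing denominators, the kernel
\[
K(x,y)=xy-u(x^{2}y+xy^{2}+x+y)-u^{2}(a-1)(x^{2}+y^{2})
\]
is symmetric under $x\leftrightarrow y$ and quadratic in each variable. The step set of the underlying walk is $\{\NN,\SS,\EE,\WW\}$, whose associated group is the dihedral group of order $8$, and the corner-weighting moves $x/y,y/x$ respect this symmetry. I would let $Y_{0}(x)$ be the branch of $K(x,Y)=0$ vanishing at $u=0$, substitute $y=Y_{0}(x)$ in \eqref{eq:Qa} to eliminate $\Qa(x,y)$, and exploit the symmetry $\Qa(x,0)=\Qa(0,x)$ together with the orbit of $Y_{0}$ under the group to derive an algebraic equation for $\Qa(x,0)$, and thence for $\Q(a,u)=\Qa(a,\sqrt{u};0,0)$.

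Third, the critical exponent $\alpha(a)$ can then be read off from the discriminant of that algebraic equation. Conjecture~\ref{prop:Q0} identifies $\rho_\Q(a)=1/(2+\sqrt{2+2a})^{2}$ for $a\ge -1/2$, corresponding to a point where the discriminant of the kernel (as a polynomial in $Y$) vanishes on the positive real axis. The remaining task is to show that this vanishing occurs with sufficient multiplicity to force $\alpha(a)>1$ throughout $a\ge -1/3$. The main obstacle is precisely this: one must track how the leading coefficients of the Puiseux expansion depend on $a$ and identify $a=-1/3$ as the value at which the dominant analytic factor $g_{1}(a,\rho_\Q(a))$ degenerates, causing $\alpha$ to drop through the divergence boundary $\alpha=1$. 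The half-plane and unrestricted analogues treated elsewhere in Section~\ref{sec:conj} admit explicit formulas in which one can check both the $3/2$-type (resp.\ $2\log$-type) exponent and the precise location of the threshold by direct computation. Adapting that verification to the quarter-plane case, where the algebraic equation for $\Qa(x,0)$ is substantially more intricate, is the reason the statement remains conjectural.
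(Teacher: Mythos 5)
This statement is one of the paper's open conjectures: the paper offers no proof of it, only the explicit verifications at $a=1$ and $a=-1$ (via Proposition~\ref{prop:Qm1} and the estimates in Proposition~\ref{prop:radiusQ}) and numerical evidence on the exponent $\gamma(a)$. Your text is likewise not a proof but a programme, and its decisive step is one that would fail. The kernel of~\eqref{eq:Qa} is $1-u(x+\bx+y+\by)-u^2(a-1)(x\by+\bx y)$, and the corner term $x\by+\bx y$ is \emph{not} invariant under $x\mapsto \bx$ or $y\mapsto\by$: the group attached to this weighted model is not the dihedral group of order $8$ of the simple walk. The involutions that fix one coordinate and preserve the kernel generate, for generic $a$, an infinite group — the paper has to work even at $a=-1$, where a special change of variables ($x=2us+X$, $y=2u+Y$ in Appendix~\ref{app:m1}) is needed to recover a group of order four. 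So there is no orbit-sum or algebraic-kernel argument producing an algebraic equation for $\Qa(x,0)$, and consequently nothing whose discriminant could yield the exponent $\alpha(a)$. Worse, the paper's remark after Proposition~\ref{prop:Qm1} (no linear differential equation could be guessed for $Q(0,u)$ from $300$ coefficients) and the apparently continuously varying exponent $\gamma(a)$ in Figure~\ref{fig:radius} strongly suggest that $Q(a,\cdot)$ is not even D-finite for generic $a$, so the object your third step would analyse most likely does not exist.

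Two further points. First, your reduction of the conjecture to ``$\alpha(a)>1$'' presupposes a singular expansion of algebraic--logarithmic type at $u=\rho_Q(a)$; for quarter-plane models with non-D-finite generating functions no such expansion is known, so even the reduction is itself conjectural. Second, the mechanism you propose for the threshold — a degeneration of the analytic prefactor at $a=-1/3$ — is not what the paper's evidence indicates: the expected divergence point of $Q'_2(a,\rho_Q(a))$ is $a=-1/2$ (where the conjectured radius formula~\eqref{radius} changes regime), and the value $-1/3$ in the statement is simply the bound needed for the application in Theorem~\ref{thm:radius} (via Corollary~\ref{cor:Sp13}), not a singular value of the model. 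A viable attack would more plausibly go through the analytic methods of~\cite{kurkova-raschel-nature,fayolle-raschel}, as the authors themselves suggest for Conjecture~\ref{prop:Q0}, rather than through a finite-group kernel computation.
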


We shall only use the first part of Conjecture~\ref{prop:Q0} (in fact,
for $a\ge -1/3$ only).   
The analytic techniques
of~\cite{kurkova-raschel-nature,fayolle-raschel} may open a way to its
proof. In fact,  
Kilian Raschel was able to predict these values for the radius from a
(not yet rigorous) application of these techniques. 
We have also checked this conjecture numerically, using the ratio test
(Figure~\ref{fig:radius}, left). Regarding Conjecture~\ref{conj:conv}, if we
assume that
$$
q_n(a):=[u^n]Q(a,u) \sim \kappa(u) \rho_Q(a)^{-n} n^{\gamma(a)}
$$
we would need $\gamma(a)<-2$. This is in good agreement with the
estimates of $\gamma(a)$ shown in Figure~\ref{fig:radius}, right. It
is likely that $Q'_2(a, \rho_Q(a))$ does not converge at $a=-1/2$.

\begin{figure}[ht]
\begin{center}
\includegraphics[width=5cm]{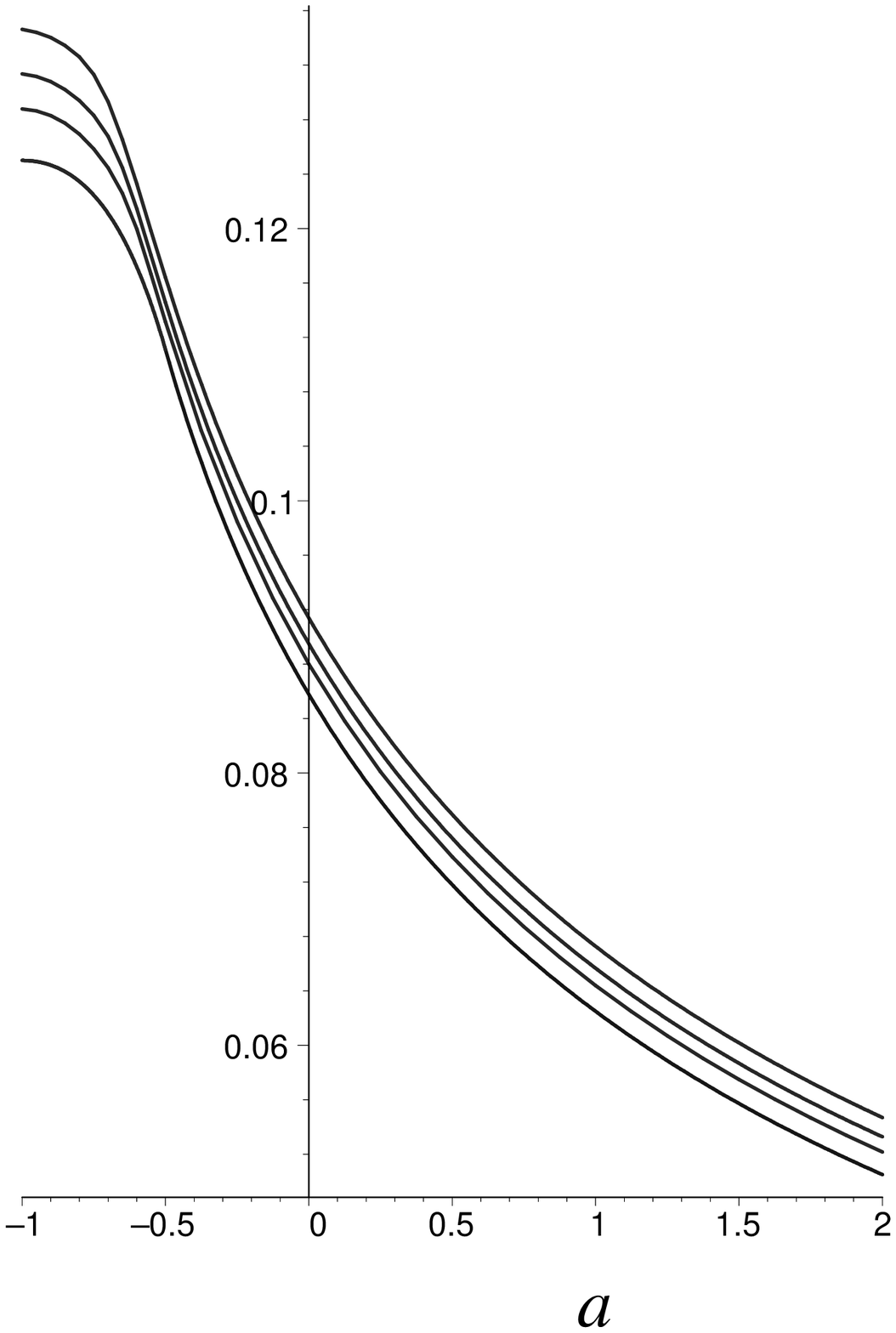}
\hskip 20mm\includegraphics[width=5cm]{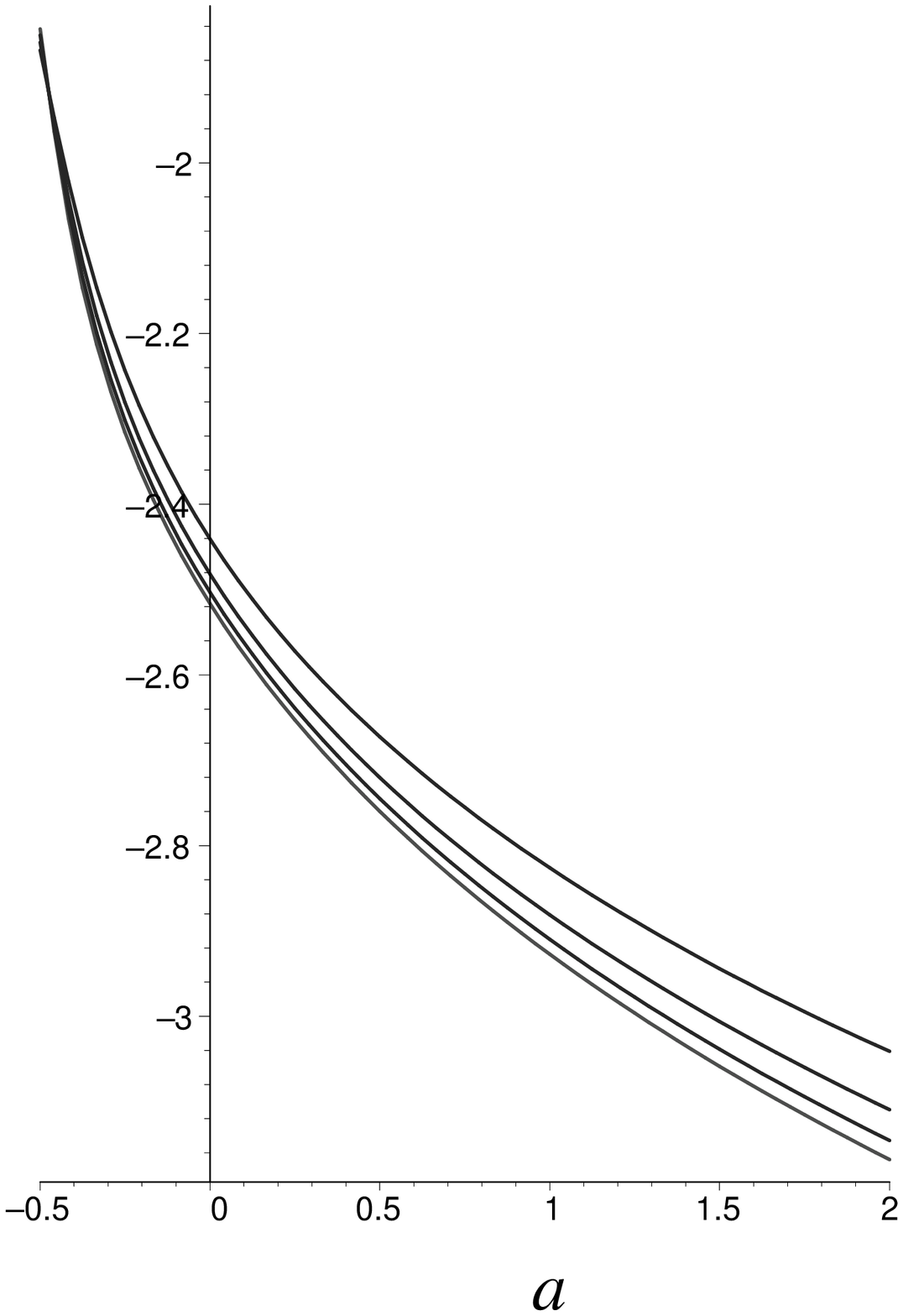}
\caption{Left: The three top curves show the ratio $q_{n-1}(a)/
  q_n(a)$, where $q_n(a)=[u^n] Q(a,u)$, for $n=40$,   $60$ and  $100$
  and $a\in [-1,2]$. The curves 
  seem to accumulate, as 
  $n$ grows, on the conjectured radius (bottom curve). Right: the
  curves $n^2(1- q_{n-1}(a)q_{n+1}(a)/q_n(a)^2)$, shown for $n=40$,
  $60$, $80$ and $100$, provide estimates for the exponent $\gamma(a)$.}
\label{fig:radius}
\end{center}
\end{figure}

In the following subsections, we gather more
 evidence for these
conjectures. In particular, we prove  Conjectures~\ref{conj:a+1}
and~\ref{prop:Q0} for  general
loops (Section~\ref{sec:a+1}), and for loops confined to the upper
half plane (Section~\ref{sec:half-plane}). Note that Conjecture~\ref{conj:conv}
does not hold for these more general loops.
The fact that Conjecture~\ref{prop:Q0}
holds for general loops and
half plane loops is reminiscent of a recent result according to which
the growth constant of (unweighted) loops confined to a wedge is
independent of this wedge~\cite[Sec.~1.5]{denisov-wachtel}.

We also prove the 
conjectures for $a=1$ and $a=-1$. In the latter case Conjecture~\ref{conj:a+1} then
simply  means that $\Q(-1,u)$ has non-negative
coefficients; these coefficients are in fact very nice, see
Proposition~\ref{prop:Qm1}.
It is interesting to refine the 
enumeration by taking
into account  the number of 
\EE\ steps with a new variable $s$. This gives rise to a
generating function denoted $\Q(a,s,u)$. In fact, we have the following refined conjecture.

\begin{Conjecture}\label{conj:a+1-ref}
The series $\Q(a,s,u)$ 
is
$(a+1)${-positive}, as well as the series $\Q^\bullet(a,s,u)$
counting  primitive quarter plane loops.
\end{Conjecture}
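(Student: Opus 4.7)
The plan is to extend the proofs of Conjecture~\ref{conj:a+1} carried out in Sections~\ref{sec:a+1} and~\ref{sec:half-plane} for walks without confinement and for half-plane walks, so that they also track the \EE-step statistic $s$. A refined functional equation for $\Qa(a,s,u;x,y)$ follows from~\eqref{eq:Qa} by the substitution $ux\mapsto usx$, which leaves the algebraic shape of the equation intact. I would therefore expect the existing arguments to lift essentially verbatim: in the unrestricted case the loop count is a trace of powers of a $4\times 4$ transfer matrix on $\{\EE,\NN,\WW,\SS\}$, and weighting the \EE-column by $s$ preserves the $(a+1)$-positive decomposition; in the half-plane case the reflection principle fixes the \EE-direction, so the refinement poses no extra difficulty.

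For the quarter-plane case, which is open already at $s=1$, I would handle the two halves of the conjecture jointly via the factorisation of loops into primitive ones,
\[
\Q(a,s,u)\;=\;\frac{1}{1-\Q^\bullet(a,s,u)},
\]
so that $(a+1)$-positivity of $\Q^\bullet$ automatically implies it for $\Q$. To establish it for $\Q^\bullet$, one promising route is the inclusion--exclusion identity $a^k=((a+1)-1)^k$: consider pairs $(w,T)$ where $w$ is a primitive quarter-plane loop and $T$ is a subset of its $\NW/\ES$ corners, weighted by $(a+1)^{|T|}(-1)^{|\mathrm{corners}(w)\setminus T|}s^{|\EE(w)|}u^{|w|}$, and build a sign-reversing involution pairing up the configurations with at least one unmarked corner. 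The natural toggle straightens the first unmarked corner according to a canonical reading order ($\NN\WW\leftrightarrow\WW\NN$ or $\EE\SS\leftrightarrow\SS\EE$); the fixed points are primitive loops in which every corner is marked, giving a manifestly $(a+1)$- and $s$-positive contribution. An alternative is a recursive first-return decomposition of primitive loops in the arch-system view of Section~\ref{sec:canonical}, in which $(a+1)$-positivity would be preserved at every inductive step.

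The main obstacle is the quarter-plane confinement. Straightening a corner can push the subsequent portion of the walk outside the quadrant, can change whether the loop remains primitive, and can create or destroy adjacent corners, simultaneously altering the sign and the unmarked-corner count of the involution; similar issues beset any naive recursive decomposition. A workable construction will need to select the toggled corner through a carefully designed scan order, and possibly transfer marking data between corners to cancel the boundary effects while remaining compatible with the \EE-tracking. The transfer-matrix arguments that succeed in the half-plane and unrestricted settings do not survive here because of the position-dependent boundary, but the analytic predictions underlying Conjecture~\ref{prop:Q0} suggest that there is enough hidden structure for one of these combinatorial strategies to go through.
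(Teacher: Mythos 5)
This statement is not a theorem of the paper but one of its open conjectures: the authors offer no proof, only a numerical verification up to half-length $n=40$ based on the refined functional equation~\eqref{eq:Qa-ref}, together with proofs of the \emph{analogous} $(a+1)$-positivity statements for unconfined loops and half-plane loops (Propositions~\ref{prop:a+1-R} and~\ref{prop:a+1-H}, which already track horizontal steps with the variable $s$) and exact evaluations at $a=\pm 1$ (Proposition~\ref{prop:Qm1}). Your proposal likewise does not prove the statement. The parts of your plan that are solid are exactly the parts that are already in the paper or trivial: the unconfined and half-plane cases with the $s$-refinement, and the remark that $\Q(a,s,u)=1/(1-\Q^\bullet(a,s,u))$, so that $(a+1)$-positivity of $\Q^\bullet$ implies it for $\Q$ (the paper states this implication explicitly). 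The genuinely new content that would be needed --- positivity for quarter-plane loops, and moreover for \emph{primitive} quarter-plane loops --- is only sketched as a research programme.

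Concretely, the gap is that neither of your two proposed mechanisms is constructed. For the sign-reversing involution coming from $a^k=((a+1)-1)^k$, you would need a toggle on unmarked $\NW/\ES$ corners that (i) keeps the walk in the quadrant, (ii) preserves the total length and the number of horizontal steps, (iii) preserves primitivity, and (iv) does not create or destroy other corners or change their marked/unmarked status in a way that breaks the sign-reversal; you yourself note that straightening a corner can violate (i), (iii) and (iv) simultaneously, and no scan order or marking-transfer rule is exhibited that repairs this. Similarly, the ``first-return decomposition preserving $(a+1)$-positivity at every step'' is asserted but not given, and it is not clear that any such decomposition exists: corners can straddle the decomposition points, which is precisely why the quarter-plane case resists the arguments that work without confinement or with only a half-plane constraint (there the paper does not use a reflection principle but an explicit algebraic expression, via Proposition~\ref{prop:ABC} and the auxiliary series $T$, and this machinery has no known quarter-plane analogue). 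As it stands, your text is a reasonable discussion of possible attack routes on Conjecture~\ref{conj:a+1-ref}, but it establishes neither half of the conjecture.
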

This has been checked on a computer up to half-length $n=40$,  using the
following refinement of the functional equation~\eqref{eq:Qa}:
\begin{multline}
  \label{eq:Qa-ref}
 (1-u(sx+s\bx+y+\by)-u^2s(a-1)(x\by+y\bx))\Qa(x,y)=\\
1-u\by (1+usx(a-1))\Qa(x,0)-us\bx (1+uy(a-1))\Qa(0,y) .
\end{multline}
This equation characterises the series $\Qa(x,y)\equiv \Qa(a,s,u;x,y)$ that
counts quarter plane walks by \NW\ and \ES\  corners ($a$), horizontal
steps ($s$), total length ($u$) and coordinates of the endpoint ($x,y$). In
particular, the above defined series $Q(a,s,u)$ is $\Qa(a,\sqrt s,
\sqrt u; 0,0)$. Of course, $\Q(a,s,u)=1/(1-\Q^\bullet(a,s,u))$, and
the second part of  Conjecture~\ref{conj:a+1-ref} implies the first 
one. We discuss in Section~\ref{sec:final} further investigations on
these conjectures.

Before we embark on our results, we want to report an observation,
due (independently) to Olivier Bernardi 
and Julien Courtiel, which might be useful to prove the above
conjectures. It tells that the pair $(\NW,\ES)$ can be replaced by other
pairs of corners. Let us say that two words on the alphabet $\{\NN,\SS,\EE,\WW\}$ are \emm
shuffle-equivalent,  if they have the same  projections on $\{\NN,\SS\}$,
and also on $\{\EE,\WW\}$. For instance, the words {\sf NEWSSWWNES} and
{\sf ENWWSSNWES} are shuffle-equivalent.  A \emm shuffle class, is an
equivalence class for this relation.

\begin{Proposition}\label{cor:equidistributed}
  There exists an involution $\Phi$ on square lattice walks that
  exchanges the number of \NW\ and \WN\   factors,  fixes
  the number of \ES\  and \SE\ factors and acts inside shuffle classes.

 Consequently, in every shuffle class, the following bi-statistics of
 corners are   equidistributed: $(\NW,\ES)$, $(\WN,\ES)$, $(\WN,\SE)$ and
 $(\NW,\SE)$. 
\end{Proposition}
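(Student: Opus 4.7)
My plan is to define $\Phi$ by decomposing $w$ into maximal substrings of letters in $\{\NN,\WW\}$, which I will call \emph{NW-blocks}, separated by letters in $\{\SS,\EE\}$; then $\Phi(w)$ is obtained by reversing each NW-block independently. That $\Phi^2=\mathrm{id}$ is immediate, so the real work is in checking the three announced properties.

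For \emph{stability of shuffle classes}, I will show that the projection of $w$ onto $\{\NN,\SS\}$ is unchanged by $\Phi$. Each NW-block contributes only its N's to that projection (its W's disappear), and since the block contains no S, those N's occupy a contiguous interval of positions in the projected word; reversing the block merely permutes N's among themselves within this interval, leaving the projected word intact. The same argument applies to the projection onto $\{\EE,\WW\}$.

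For the \emph{statistics}, every $\NW$ or $\WN$ factor consists of two consecutive letters both lying in $\{\NN,\WW\}$, and is therefore strictly inside a single NW-block; reversing a block interchanges its $\NW$ factors with its $\WN$ factors, so that summing over blocks swaps the total counts. On the other hand, every $\ES$ or $\SE$ factor consists of letters from $\{\SS,\EE\}$, which sit entirely outside the NW-blocks, and is therefore untouched by $\Phi$.

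The consequence on equidistribution will follow by composing $\Phi$ with the symmetric involution $\Phi'$ obtained by reversing each maximal $\{\SS,\EE\}$-block: by the same reasoning, $\Phi'$ exchanges $\ES\leftrightarrow\SE$ and fixes the $\NW$ and $\WN$ counts. Applying $\Phi$, $\Phi'$ and $\Phi\circ\Phi'$ in turn transports any of the four bi-statistics to any other. The only subtle point is the shuffle-class invariance: it rests entirely on the fact that an NW-block contains no letter of $\{\SS,\EE\}$, which prevents any reshuffling between the projected N's (resp.~W's) and the S's (resp.~E's).
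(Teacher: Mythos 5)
Your proposal is correct and is essentially the paper's own proof: reversing each maximal $\{\NN,\WW\}$-block is exactly the construction used there, and your companion involution $\Phi'$ on maximal $\{\SS,\EE\}$-blocks is the ``simple variant'' the paper invokes for the remaining pairs of bi-statistics. You simply spell out the verifications (shuffle-class invariance via the two projections, block-internal location of the $\NW$/$\WN$ factors) that the paper leaves implicit.
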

\begin{proof}
  To construct $\Phi(w)$, read backwards every maximal factor of $w$ consisting of \NN\ and \WW\ steps:
  this transforms every \NW\ factor into a \WN\ factor, and vice-versa. For
  instance, the word {\sf ENWWSSNWES} becomes {\sf EWWNSSWNES}. This is clearly an
  involution, which satisfies the announced properties.

 The equidistribution of $(\NW,\ES)$ and $(\WN,\ES)$ follows. The
 equivalence with the other pairs follows from simple variants of the
  involution $\Phi$. 
\end{proof}

\subsection{Some results on quarter plane loops}
\begin{Proposition}
\label{prop:Qm1}
 The series $Q(1,u)$ counting quarter plane loops by the
 half-length is
$$
Q(1,u)= 
\sum_{i, j \ge 0}  {2i+2j \choose {2i}} C_i C_{j}u^{i+j}
=\sum_{n\ge 0} 
 C_nC_{n+1}
u^n ,
$$
where $C_i={2i\choose i}/(i+1)$ is the $i^{\mbox{\scriptsize th}}$ Catalan number. This can be refined by taking
into account  the number of \EE\ steps (with a variable $s$):
\beq\label{Q1-ref}
Q(1,s,u)=
\sum_{i, j \ge 0}  {2i+2j \choose {2i}} C_i C_{j} s^i u^{i+j}.
\eeq
The value of $Q(a,s, u)$ at $a=-1$ is just as remarkable:
\beq\label{expr-Qm1}
Q(-1,s,u)= 
\sum_{i, j \ge 0} {i+j \choose {i}} C_i C_{j} s^i  u^{i+j}.
\eeq
\end{Proposition}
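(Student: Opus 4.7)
My plan is to prove the three displayed identities in sequence, with the first two classical and the third requiring a delicate combinatorial lemma.

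For the $a=1$ identities, I would use the classical decomposition of a quarter plane loop $w$ with $i$ \EE-steps, $i$ \WW-steps, $j$ \NN-steps and $j$ \SS-steps into a triple $(H,V,\pi)$: $H\in\{\EE,\WW\}^{2i}$ is the horizontal subsequence (a Dyck word, since the $x$-coordinate stays non-negative and returns to $0$; $C_i$ choices), $V\in\{\NN,\SS\}^{2j}$ is the vertical subsequence ($C_j$ choices), and $\pi\in\binom{[2i+2j]}{2i}$ records the positions of the horizontal steps ($\binom{2i+2j}{2i}$ choices). Since the quadrant constraint decouples along the two axes, any such triple reconstructs a valid loop, which yields~\eqref{Q1-ref}. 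Setting $s=1$ and using the classical identity $\sum_{i+j=n}\binom{2n}{2i}C_iC_j=C_nC_{n+1}$ produces $Q(1,u)=\sum_n C_nC_{n+1}u^n$.

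For~\eqref{expr-Qm1}, the same horizontal/vertical decomposition reduces the problem to the following key lemma:
\[
\Sigma(H,V):=\sum_{\pi}(-1)^{\#\NW(\pi)+\#\ES(\pi)}=\binom{i+j}{i}
\]
for every $H\in\{\EE,\WW\}^{2i}$ with $i$ letters of each kind and every $V\in\{\NN,\SS\}^{2j}$ with $j$ letters of each kind, $\pi$ ranging over all shuffles with no quadrant restriction. Summing over the $C_iC_j$ Dyck pairs then yields~\eqref{expr-Qm1}, because any shuffle of two Dyck words automatically lies in the quadrant. Small-case verification ($i+j\le 3$) confirms the lemma and, strikingly, shows that $\Sigma(H,V)$ depends only on $i$ and $j$, not on the specific letter sequences.

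The proof of the key lemma is the principal obstacle. I would proceed in two steps. First, establish the invariance $\Sigma(H,V)=\Sigma(H',V)$ whenever $H'$ is obtained from $H$ by swapping two adjacent letters (and symmetrically in $V$): for a fixed shuffle $\pi$, swapping $h_k$ and $h_{k+1}$ modifies the walk at the two positions $p_k<p_{k+1}$ these letters occupy, and the change in the number of bad corners depends on whether $p_{k+1}=p_k+1$ and on the neighbouring letters; a sign-compatible rearrangement of shuffles should yield the required cancellation. Second, evaluate $\Sigma$ on the canonical pair $H=\EE^i\WW^i$, $V=\NN^j\SS^j$, where one can classify shuffles by the positions of the vertical steps; a direct calculation I have carried out for $j=1$ gives $\Sigma=i+1=\binom{i+1}{i}$, and the general case should follow by induction on $j$. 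A cleaner but contingent alternative is to guess a closed form for the full bivariate series $\Qa(-1,s,u;x,y)$, inspired by the factorisation in~\eqref{expr-Qm1}, and verify that it satisfies the functional equation~\eqref{eq:Qa-ref} at $a=-1$, thereby reducing the proof to an explicit polynomial identity. The main difficulty is the invariance step, where the interaction between the adjacent transposition and the shuffle structure makes the sign-cancellation non-trivial.
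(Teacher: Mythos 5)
Your treatment of the $a=1$ identities is correct and is exactly the paper's argument: a quarter plane loop is an unrestricted shuffle of a horizontal and a vertical Dyck word, which gives \eqref{Q1-ref}, and Chu--Vandermonde gives $\sum_n C_nC_{n+1}u^n$. The $a=-1$ case, however, is where the real content lies, and there your proposal has a genuine gap. Your ``key lemma'' --- that for any fixed bilateral Dyck words $H$ and $V$ the signed sum over all shuffles, $\sum_{\pi}(-1)^{\#\NW+\#\ES}$, equals $\binom{i+j}{i}$ --- is precisely Property $(P_1)$ stated in Section~\ref{sec:final} of the paper, which the authors record only as an \emph{observed, unproved} property (attributed to Julien Courtiel, verified by computer for small $i,j$) and explicitly say \emph{would} give a new proof of \eqref{expr-Qm1} \emph{if true}. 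Your sketch does not close this: the adjacent-transposition invariance step is only asserted (``a sign-compatible rearrangement of shuffles should yield the required cancellation''), and this is exactly where the difficulty sits, since swapping two adjacent letters of $H$ changes the corner count of a given shuffle in a way that depends on the interleaved vertical letters, and no sign-preserving bijection is exhibited; likewise the evaluation on $\EE^i\WW^i$, $\NN^j\SS^j$ is carried out only for $j=1$, with the induction on $j$ left unproved. Small-case verification is evidence, not proof, so as written the $a=-1$ identity is not established.

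For comparison, the paper's actual proof of \eqref{expr-Qm1} (Appendix~\ref{app:m1}) is analytic rather than bijective: starting from the functional equation \eqref{eq:Qa-ref} at $a=-1$, the natural symmetries of the kernel generate an infinite group, but after the change of variables $x=2us+X$, $y=2u+Y$ a group of order four reappears; the alternating (orbit) sum over this group eliminates the boundary unknowns, the series $(x-2su)(y-2u)\Qa(x,y)$ is identified as the non-negative part in $x,y$ of an explicit rational function, and the constant-term extraction is completed by a partial-fraction computation (Lemma~\ref{lem:CT}). Your fallback idea of guessing a closed form for the full series $\Qa(-1,s,u;x,y)$ and checking it against \eqref{eq:Qa-ref} is contingent on such a closed form existing and being guessable; the paper never needs one, extracting only the $x^0y^0$ coefficient. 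If you can actually prove the shuffle-class invariance, your route would be a genuinely interesting combinatorial alternative (and would prove more than \eqref{expr-Qm1}, namely $(P_1)$ itself, hence also \eqref{expr-Rm1} and \eqref{expr-Hm1}); at present it is a conditional argument resting on an open conjecture of the paper.
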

In particular, the coefficients of $Q(-1,s,u)$ are non-negative, which
is a very weak form of Conjecture~\ref{conj:a+1-ref}.
\begin{proof}
When $a=1$, we do not take corners into account.  The results dealing
with $Q(1,u)$ and $Q(1,s,u)$ are well-known and 
  easy to prove: it suffices to observe that a quarter plane loop
  is obtained by shuffling two Dyck paths, one on the alphabet
  $\{\NN,\SS\}$ and the other on the alphabet $\{\EE,\WW\}$. Since there are
  $C_i$ Dyck paths of length $2i$, this gives directly the expression
  of $Q(1,s,u)$, and hence the first expression of $Q(1,u)$. The
  second one follows using the Chu-Vandermonde summation. See
  also~\cite{cori-baxter} for a (recursive) bijective proof,
  and~\cite{bernardi-tree-rooted} for a  non-recursive one.

For the case $a=-1$, we  work from the
functional equation~\eqref{eq:Qa-ref}. The proof, inspired by recent
progress of general quarter plane walks~\cite{mbm-mishna},  is a bit
long. It is given in Appendix~\ref{app:m1}.
\end{proof}

\textbf{Remark}. The above expressions imply that $Q(a,u)$ is
\emm D-finite, for $a=1$ and $a=-1$. That is, it satisfies a linear
differential equation (LDE) in $u$ with polynomials in $\qs[u]$. We suspect
that  $Q(a,u)$ does not satisfy
any LDE with coefficients in $\qs[a,u]$. Using the Maple package
gfun~\cite{gfun}, we have tried in vain to guess an LDE for $Q(0,u)$ from the
first 300 coefficients.

Let us now discuss the radius of convergence of $Q(a, \cdot)$. We
begin with a simple lemma, which is often used in a
statistical physics context.
\begin{Lemma}\label{lem:cont}
  Let $F(a,u)=\sum_{n\ge 0} f_n(a)u^n$ be a \fps\ in $u$ with
  coefficients in $\ns[a]$, such that $f_n(a)$ has degree at most $n$.
Assume that $F$ is not a polynomial.

For $a\ge 0$, let $\rho(a)$ be the radius of convergence of the series
$F(a, \cdot)$.  Then 
 $\rho$ is a non-increasing  function on $[0,+\infty)$, which is finite
 and continuous on  $(0,+\infty)$.
\end{Lemma}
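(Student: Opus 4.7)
The plan is to work with $r(a) := 1/\rho(a) = \limsup_n f_n(a)^{1/n}$ and extract everything from two structural facts: non-negativity of the coefficients of each $f_n$, and the degree bound $\deg f_n \le n$. Monotonicity is immediate from the first fact: for $0 \le a' \le a$ we get $f_n(a') \le f_n(a)$, hence $r(a') \le r(a)$ and $\rho$ is non-increasing on $[0, \infty)$.

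The core estimate I would then prove is the two-sided bound
$$r(a') \le r(a) \le \frac{a}{a'}\, r(a') \qquad (0 < a' \le a).$$
The left inequality is just monotonicity. For the right one, write $f_n(a) = \sum_k c_{n,k} a^k$ with $c_{n,k} \ge 0$ and $k \le n$. The degree bound lets one factor $a^k = (a')^k (a/a')^k \le (a/a')^n (a')^k$, sum over $k$ to obtain $f_n(a) \le (a/a')^n f_n(a')$, then take $n$th roots and pass to the limsup.

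Finiteness of $\rho$ on $(0, \infty)$ follows by bounding $r$ from below. Since $F$ is not a polynomial, infinitely many $f_n$ are nonzero, and each such $f_n \in \ns[a]$ has some monomial $c_{n,k} a^k$ with $c_{n,k} \ge 1$ and $k \le n$. This forces $f_n(a) \ge a^n$ for $a \in (0,1]$ and $f_n(a) \ge 1$ for $a \ge 1$, so $r(a) \ge \min(a,1) > 0$, i.e., $\rho(a) < \infty$ on $(0, \infty)$. Continuity now drops out of the two-sided bound: right-continuity of $r$ at any $a' > 0$ comes from letting $a \searrow a'$, and left-continuity at $a_0 > 0$ from letting $a' \nearrow a_0$ in $r(a') \le r(a_0) \le (a_0/a') r(a')$. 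Since $r > 0$ on $(0, \infty)$, $\rho = 1/r$ inherits continuity there.

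I do not foresee a serious obstacle. The whole argument hinges on a single mechanism: the degree bound $\deg f_n \le n$ converts a multiplicative perturbation $a/a'$ into the factor $(a/a')^n$, which collapses back to $a/a'$ after taking $n$th roots. The only place the "$F$ is not a polynomial" hypothesis is used is the lower bound on $r$, which prevents $\rho$ from being $+\infty$ and so makes the inversion $\rho = 1/r$ safe for transferring continuity.
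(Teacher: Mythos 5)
Your proposal is correct and follows essentially the same route as the paper: monotonicity from non-negativity of the coefficients, finiteness from $f_n(a)\ge\min(1,a^n)$ at the infinitely many nonzero indices, and continuity from the degree-bound estimate $f_n(a)\le (a/a')^n f_n(a')$ yielding the two-sided sandwich; phrasing it via $r=1/\rho$ (Cauchy--Hadamard) instead of $\rho$ itself is only a cosmetic difference.
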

\begin{proof}
  Since $F$ is not a polynomial, there exist infinitely many $n$ such
  that $f_n(a)\not = 0$. In this case, we have, for $a>0$:
$$
f_n(a)\ge \min(1, a^n).
$$
This shows that $\rho(a)\le \max(1, a^{-1})$, and is, in particular,
finite for $a>0$.

That $\rho(a)$ is non-increasing comes from the fact that $f_n(a)$ is
non-decreasing. Finally, if $0<a\le a'$, we have
$$
f_n(a') \le \left(\frac {a'}a\right)^n f_n(a)
$$
(since $f_n$ has degree at most $n$), which gives
$$
\rho(a') \ge \frac a {a'}\, \rho(a),
$$
and, together with $\rho(a') \le \rho(a)$, establishes the continuity
of $\rho$ in $(0, +\infty)$.
\end{proof}

\begin{Proposition}
\label{prop:radiusQ}
  For fixed $a$, let $\rho_\Q(a)$ be the radius of convergence of
  $\Q(a, \cdot)$. Then
$$
\rho_Q(-1)= \frac 1  8, \quad 
 \quad \rho_Q(1)= \frac 1 {16},
$$
and $\rho$ is a non-increasing function on $[0, +\infty)$,  continuous
on $(0, +\infty)$. Moreover, for $a\ge 0$, 
\beq\label{radius-bound}
\rho_Q(a) \ge \frac 1 {(2+\sqrt{2+2a})^2}.
\eeq
The series $\Q'_2(a, \rho_Q(a))$ converges for $a=-1$ and $a=1$.

\smallskip
 If $\Q(a,u)$ is $(a+1)$-positive,  then $\rho_\Q$
  is non-increasing  on $[-1, +\infty)$ and continuous on
  $(-1, +\infty)$. 
\end{Proposition}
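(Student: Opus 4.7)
My plan is to split the statement into four parts and handle them in turn. The structural monotonicity and continuity claims follow from a direct application of Lemma~\ref{lem:cont}. A quarter plane loop of half-length $n$ has at most $n$ corners of any given type, so $[u^n]\Q(a,u)\in\ns[a]$ has degree at most $n$, and $\Q(a,\cdot)$ fits the hypotheses of Lemma~\ref{lem:cont} on $[0,+\infty)$; this gives the monotonicity on that interval and continuity on $(0,+\infty)$. For the last sentence, assuming $(a+1)$-positivity, I would set $\tilde \Q(b,u):=\Q(b-1,u)=\sum_n P_n(b)u^n$; its coefficients lie in $\ns[b]$ with degrees bounded by $n$, so Lemma~\ref{lem:cont} applied to $\tilde \Q$ gives the claim on $b\ge 0$, which after translation becomes the statement on $[-1,+\infty)$.

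Next I would read off $\rho_\Q(\pm 1)$ from the explicit formulas of Proposition~\ref{prop:Qm1}. At $a=1$, the classical Catalan asymptotic $C_nC_{n+1}\sim 16^n/(\pi n^3)$ immediately gives $\rho_\Q(1)=1/16$, and the decay $\sim \rho^{-n}/n^3$ makes $\Q'_2(1,1/16)=\sum_n nC_nC_{n+1}/16^{n-1}$ converge as a $\sum 1/n^2$ series. At $a=-1$, setting $s=1$ in~\eqref{expr-Qm1} gives $[u^n]\Q(-1,u)=\sum_{i+j=n}\binom{n}{i}C_iC_j$; the crude upper bound $\binom{n}{i}C_iC_{n-i}\le 2^n\cdot 4^n=8^n$ yields $\le (n+1)\,8^n$, while the central term $i=j=\lfloor n/2\rfloor$ combined with $C_{n/2}\gtrsim c\cdot 4^{n/2}/n^{3/2}$ provides a matching lower bound $\gtrsim c\cdot 8^n/n^4$. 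Hence $\rho_\Q(-1)=1/8$.

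For the lower bound $\rho_\Q(a)\ge 1/(2+\sqrt{2+2a})^2$ when $a\ge 0$, I would compare quarter plane loops with unrestricted square lattice walks of the same length, weighted by $\NW$ and $\ES$ corners. This larger family is counted by a rational generating function obtained from a $4\times 4$ transfer matrix $M$ indexed by the four cardinal directions of the last step, with entry $a$ at the positions $(N,W)$ and $(E,S)$ and $1$ elsewhere. For $a\ge 0$ the matrix is non-negative, and it is invariant under the involution $(E\leftrightarrow N,\,W\leftrightarrow S)$, so one may restrict to eigenvectors of the form $(x,x,y,y)$; this reduces the problem to a $2\times 2$ system with characteristic equation $(\lambda-2)^2=2(1+a)$, so the Perron--Frobenius eigenvalue is $\lambda=2+\sqrt{2+2a}$. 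The weighted number of walks of length $2n$ is then $O(\lambda^{2n})$, and since quarter plane loops form a sub-family with the same weight, the radius of $\Q(a,\cdot)$ in the half-length variable is at least $1/\lambda^2$. The bound is sharp at $a=1$, consistent with $\rho_\Q(1)=1/16$.

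The delicate point is the refined asymptotic $[u^n]\Q(-1,u)\sim c\cdot 8^n/n^3$ required for the convergence of $\Q'_2(-1,1/8)$: the crude bound $(n+1)\,8^n$ is too weak. I plan to obtain the sharper estimate by a saddle-point analysis, approximating $\binom{n}{i}$ near $i=n/2$ by the Gaussian $\sqrt{2/(\pi n)}\,2^n\exp(-2(i-n/2)^2/n)$ and using $C_iC_{n-i}\sim 4^n/(\pi (n/2)^3)$ in the bulk; summation of the Gaussian over $i$ contributes $\sqrt{\pi n/2}$ and produces the announced $n^{-3}$ polynomial correction. The convergence of $\Q'_2$ at $u=\rho_\Q(-1)$ then reduces to $\sum_n n\cdot n^{-3}<\infty$, which completes the proof.
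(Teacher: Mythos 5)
Your proposal is correct and follows essentially the same route as the paper: Lemma~\ref{lem:cont} (applied after the shift $b=a+1$ for the final claim), the explicit formulas of Proposition~\ref{prop:Qm1} with Stirling/Laplace-type asymptotics at $a=\pm 1$ giving the $n^{-3}$ decay needed for the convergence of $\Q'_2$, and domination of quarter plane loops by unrestricted corner-weighted walks for the bound~\eqref{radius-bound}. The only cosmetic difference is that you obtain the radius $1/(2+\sqrt{2+2a})$ of the dominating series via a Perron--Frobenius transfer-matrix computation, whereas the paper simply quotes its rational generating function $1/(1-4u-2u^2(a-1))$; both yield the same bound.
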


Note that Conjecture~\ref{prop:Q0} says that the
bound~\eqref{radius-bound} is tight. 
\begin{proof}
  The first two results follow from the explicit expressions of
  Proposition~\ref{prop:Qm1}. At $a=1$, we simply apply Stirling's
  formula to the second expression of $Q(1,u)$ to obtain the radius. More
  precisely, we find
$$
[u^n] Q(1,u) \sim \frac 4 \pi 16^n n^{-3}.
$$
At $a=-1$, we have to determine the asymptotic behaviour of a sum of
positive terms. We use the approach described 
in~\cite[Section~3]{bender}, and find
$$
[u^n] Q(-1,u) \sim \frac{1}\pi 8^{n+1}n^{-3}.
$$
These estimates imply the convergence of $Q'_2(a,\rho_Q(a))$ at $a=1$
and $a=-1$. 

Since a walk of half-length $n$ has at most $(n-1)$ \NW\ or \ES\
factors, the properties of $\rho_Q$ on $[0, +\infty)$ are a direct
application of Lemma~\ref{lem:cont}. 

The lower bound of $\rho_Q(a)$ for $a\ge 0$ follows from the fact that
$\Q(a,u ^2)$ is dominated by the series  counting \emm all,
square lattice walks by the length and the number of \NW\ and \ES\ 
corners. This series is easily seen to be
$$
\frac 1 {1-4u -2{u}^{2}(a-1) },
$$
 and its radius is
$ 1/( {2+\sqrt{2+2a} })$. See the proofs of Propositions~\ref{prop:general}
 and~\ref{prop:radius-general} for details. This shows that the radius
 of $\Q(a,u ^2)$ is at least $ 1/( {2+\sqrt{2+2a} })$, which is
 equivalent to~\eqref{radius-bound}.
\end{proof}

\subsection{General loops: Generating functions}
\label{sec:general}

We  now address the enumeration of general loops according to the
number of \ES\  and \NW\ corners.  Their generating function can be obtained by two
successive coefficient extractions in a rational \gf. This will allow
us to prove that Conjectures~\ref{conj:a+1} ($(a+1)$-positivity) and~\ref{prop:Q0}
(radius of convergence) hold for general loops.

\begin{Proposition}\label{prop:general}
  The generating function $\Ra(a,s,t;x,y)$ counting  square lattice walks  by the
  number of horizontal steps ($s$), the 
number of vertical steps $(t)$,  the number of \ES\ 
  and \NW\ corners ($a$) and the   coordinates of the endpoint ($x,y$)
  is rational, and given by
\beq\label{W}
\Ra(a,s,t;x,y)=
%
\frac 1{1-{ {s \left( {x}+\bx \right) }}-{t { \left( {y}+\by
 \right) }} -{st \left( a-1 \right) { \left( {x}\by+\bx{y} \right) }}}.
\eeq
The generating function that only counts walks ending on the $x$-axis is algebraic,
and given by
\begin{align}
  \Ra_{-,0}(a,s,t;x) &:=[y ^0]\Ra(a,s,t;x,y) \nonumber \\
& \label{W0}=\frac 1 {\sqrt{(1-s(x+\bx))^2-4t^2(1+sx(a-1))(1+s\bx(a-1))}}.
\end{align} The generating function that only counts loops is D-finite, and given by
\beq\label{g-00}
\Ra_{0,0}(a,s,t):=[x^0y ^0]\Ra(a,s,t;x,y)= \sum_{j\ge 0} {2j\choose j}
t^{2j} \Ra_{0,0,j}(a,s)
\eeq
where 
\beq\label{W00j}
\Ra_{0,0,j}(a,s)= [x^0]
\frac{(1+sx(a-1))^j(1+s\bx(a-1))^j}{(1-s(x+\bx))^{2j+1}}. 
\eeq
The generating function 
\beq\label{A-def}
A(a,s,t):=\sum_{j\ge 0} t^{2j}\Ra_{0,0,j}(a,s)
\eeq
 is biquadratic, and can be written as 
\beq\label{Aast}
  A(a,s,t)=
\frac{1+t^2(1-a^2)T}
{\left(1-t^2(a^2-1)T   \right)^2 - t^2(1+2(a+1)T)^2}
\ \sqrt{\frac{1+4T-t^2(a-1)^2T}{1-t^2(a-1)^2T}},
\eeq
where $T\equiv T(a,s,t)$ is the unique series in
  $\qs[a][[s,t]]$  that satisfies   $T(a,0,t)=0$ and
\beq\label{eq:T}
T = s^2\, \frac{1 + 4T - t^2 (a-1)^2 T}{1-t ^2-t^2(a+1)^2T}.
\eeq
\end{Proposition}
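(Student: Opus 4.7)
For the rational formula \eqref{W}, I will use a direct combinatorial argument. The key observation is that in any square lattice walk the set of $\ES$ and $\NW$ corners is automatically pairwise disjoint: if the consecutive pair $(i,i+1)$ is an $\ES$ corner, then step $i+1$ is $S$, which rules out an $\ES$ or $\NW$ corner at $(i+1,i+2)$ since both of these begin with $E$ or $N$; the three remaining cases are analogous. Consequently, any walk decomposes freely into ``atoms'' that are either single steps (weighted $sx$, $s\bar x$, $ty$, $t\bar y$) or distinguished $\ES$ or $\NW$ corners (weighted $(a-1)st\,x\bar y$ or $(a-1)st\,\bar x y$). The sum $\sum_i \binom{k}{i}(a-1)^i = a^k$ over the $2^k$ choices of distinguished corners in a walk with $k$ such corners recovers exactly the weight $a^k$ of that walk, so $\Ra$ equals the geometric series over this alphabet of atoms, i.e.\ exactly \eqref{W}.

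Formulas \eqref{W0} and \eqref{g-00}--\eqref{W00j} follow by two classical constant-term extractions. Writing $\Ra = 1/(\alpha - \beta y - \gamma\bar y)$ with $\alpha = 1 - s(x+\bar x)$, $\beta = t(1+s\bar x(a-1))$, $\gamma = t(1+sx(a-1))$, the geometric expansion of $\Ra$ in powers of $(\beta y + \gamma \bar y)/\alpha$ combined with the identity $[y^0](\beta y + \gamma\bar y)^{2j} = \binom{2j}{j}(\beta\gamma)^j$ yields
\[
\Ra_{-,0}(a,s,t;x) = \sum_{j\ge 0}\binom{2j}{j}\frac{(\beta\gamma)^j}{\alpha^{2j+1}} = \frac{1}{\sqrt{\alpha^2 - 4\beta\gamma}},
\]
which is \eqref{W0} after substitution; applying $[x^0]$ term by term to this sum is exactly \eqref{g-00}--\eqref{W00j}.

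The heart of the proof is the biquadratic formula \eqref{Aast}. Exchanging $\sum_j$ with $[x^0]$ and summing the resulting geometric series in $t^2$ transforms \eqref{A-def} into
\[
A(a,s,t) = [x^0]\frac{1 - s(x+\bar x)}{\bigl(1 - s(x+\bar x)\bigr)^2 - t^2 P(x)}, \qquad P(x) := (1 + sx(a-1))(1 + s\bar x(a-1)).
\]
The integrand depends on $x$ only through $Z := x + \bar x$, and its denominator factorises as $s^2(Z-Z_+)(Z-Z_-)$ with explicit symmetric functions
\[
Z_+ + Z_- = \frac{2 + t^2(a-1)}{s}, \qquad Z_+ Z_- = \frac{1 - t^2 - t^2 s^2(a-1)^2}{s^2}.
\]
Partial fractions in $Z$ together with the identity $[x^0]\, 1/(Z - r) = -1/\sqrt{r^2 - 4}$ (valid as a formal Laurent series, or as the residue $\tfrac{1}{2\pi i}\oint\cdot\,dx/x$) reduce $A$ to a rational combination of $\sqrt{Z_+^2 - 4}$ and $\sqrt{Z_-^2 - 4}$.

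The main obstacle is to collapse this combination of two square roots into the single nested radical appearing in \eqref{Aast}. I plan to achieve this through the auxiliary series $T$ defined by \eqref{eq:T}, which rationally parametrises the relevant degree-four algebraic extension: once $T$ is introduced, one of $\sqrt{Z_\pm^2 - 4}$ becomes rational in $T$ while the other produces the radical in $T$ displayed in \eqref{Aast}, and the rational prefactor simplifies to the stated one. The final step is then a purely algebraic identity: clearing denominators using \eqref{eq:T} to eliminate powers of $T$ and verifying that both sides agree; this is a heavy but elementary computation that I would delegate to a computer algebra system.
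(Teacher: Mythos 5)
Your proposal is correct in substance, and for the hardest part it ends exactly where the paper does: in a computer-algebra elimination plus a branch-selection argument. The differences are as follows. For \eqref{W}, the paper reruns the step-by-step (last-step) recursion of Theorem~\ref{thm:eqs}, noting that without a boundary the terms $\Qa(x,0)$, $\Qa(0,y)$ disappear; you instead observe that \NW\ and \ES\ corners never share a step, mark an arbitrary subset of them, and use $a^k=((a-1)+1)^k$ to get a free decomposition into six atoms. That is a valid and arguably slicker derivation (the paper also offers a second combinatorial proof, via Proposition~\ref{prop:ABC}, which is yet another route). Your derivations of \eqref{W0} and \eqref{g-00}--\eqref{W00j} coincide with the paper's expansion \eqref{W0-exp}. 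For \eqref{Aast}, the paper sets $s\mapsto su$, $t\mapsto u$, does partial fractions in $x$ (four roots $X_1,X_2,1/X_1,1/X_2$), eliminates $X_1,X_2$ to get the minimal polynomial of $A$ (quadratic in $A^2$), then substitutes $s^2$ in terms of $T$, factors, and picks the branch lying in $\qs[a][[s,t]]$; you exploit the symmetry $x\leftrightarrow\bx$ from the start, reduce to a quadratic in $Z=x+\bx$, and obtain $A$ explicitly as a combination of $1/\sqrt{Z_\pm^2-4}$ before handing the identification with \eqref{Aast} to a CAS. Both routes need the same amount of machine computation, the same care with branches (pinned down by power-series expansions), and both implicitly use that \eqref{eq:T} determines $T$ uniquely in $\qs[a][[s,t]]$, which is immediate by coefficient extraction.

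One detail of your sketch is inaccurate, though not fatally so: it is not true that one of $\sqrt{Z_\pm^2-4}$ becomes rational in $T$. Specialising $a=0$, $t=1/2$, $T=1$ (so $s^2=2/19$), one finds $Z_+Z_-=55/8$, $(Z_++Z_-)^2=931/32$, hence $(Z_+^2-4)(Z_-^2-4)=121/64=(11/8)^2$, while $Z_\pm^2-4$ individually involve $\sqrt{969}$ and are irrational; for a generic $s$ (e.g.\ $s^2=1/10$, same $a,t$) the product is $65/16$, not a square. So what the quadratic extension $\qs(a,t,T)\supset\qs(a,t,s^2)$ rationalises is the \emph{product} of the two outer radicals, and the collapse to the single radical in \eqref{Aast} also uses the invariance of $A$ under the conjugation exchanging $Z_+$ and $Z_-$ (sign change of the inner radical); neither radical collapses on its own. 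Since your actual final step is to verify the complete identity by CAS, with branches fixed by the explicit expansions of $Z_\pm$ and $T$, this misstatement does not invalidate the plan — it is on a par with the paper's own computation — but the claimed mechanism should be corrected before you write it up.
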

As explained in the proof below,  finding the
expression of $\Ra(a,s,t;x,y)$ is simple, and then the rest of the proposition
follows using two consecutive coefficient extractions. We will however
give at the end of this subsection an alternative, more combinatorial proof
of~(\ref{W}-\ref{W00j}), which explains in particular the factor
${2j\choose j}$ occurring in~\eqref{g-00}.
\begin{proof}
To establish the expression of $\Ra(a,s,t;x,y)$, we use the
step-by-step construction of walks that was used in Theorem~\ref{thm:eqs} to
establish an equation for $\Qa(x,y)$. The argument is simplified by
the fact that we have no boundary. That is, the terms
$\Qa(x,0)$ and $\Qa(0,y)$ that occur in~\eqref{eq:Qa} disappear, and one
obtains~\eqref{W}.

There are several ways to obtain the expression~\eqref{W0} for walks
ending on the $x$-axis. One can for instance write a system of
algebraic equations by decomposing walks at their first visit on the
$x$-axis, in the spirit of what one usually does to count Dyck paths
(see e.g.~\cite[Sec.~V.4]{flajolet-sedgewick}).  We can also
expand~\eqref{W} directly in $y$:
\begin{multline}
\label{W0-exp}
  [y^0]\Ra(a,s,t;x,y) =  [y^0] \frac 1{\left(1-s(x+\bx)\right) 
\left( 1-\frac{ty(1+s(a-1)\bx)}{1-s(x+\bx)}
-\frac{t\by(1+s(a-1)x)}{1-s(x+\bx)}\right)}
\\
=  [y^0]\sum_{i,j\ge 0} {i+j \choose i}
\frac{t^{i+j}y^{j-i}(1+s(a-1)\bx)^j(1+s(a-1)x)^i}{\left(1-s(x+\bx)\right)
  ^{i+j+1}}\\
=\sum_{j\ge 0} {2j \choose j}
\frac{t^{2j}(1+s(a-1)\bx)^j(1+s(a-1)x)^j}{\left(1-s(x+\bx)\right)
  ^{2j+1}},
\end{multline}
which is equivalent to~\eqref{W0} since $\sum_{j\ge 0}  {2j \choose j}v^j=(1-4v)^{-1/2}$.

Let us now count loops. Equations~\eqref{g-00} and~\eqref{W00j} are easily
obtained by extracting the coefficient of $x^0$ in~\eqref{W0-exp}.
We now want to obtain an expression for 
$$
A(a,s,t):= \sum_{j\ge 0} t^{2j}\Ra_{0,0,j}
= [x^0] R(a,s,t;x),
$$
where 
$$
R(a,s,t;x)=\frac{1-s(x+\bx)}{(1-s(x+\bx))^2-t^2(1+sx(a-1))(1+s\bx(a-1))}.
$$
As in Appendix~\ref{app:m1}, we want to extract a coefficient in a
rational fraction (the above series $R$, specialised to $a=-1$, is in
fact related to the series~\eqref{R-def} considered in the appendix). Even though
this is not vital, we find it convenient 
to have one main 
length variable $u$, that is, to replace $s$ by $su$ and $t$ by
$u$.  The denominator of $R(a,su,u;x)$ is a Laurent polynomial in $x$, symmetric in $x$ and
$\bx$, of degree 2. It has four roots, which  are Laurent 
series in $u$ with coefficients in $\qs(\sqrt a, s)$ (we refer  to~\cite[Chapter~6]{stanley-vol2} for generalities on
solutions of polynomial equations with  coefficients in $\GK(u)$, for
a field $\GK$ of characteristic $0$). Two of the roots,
denoted $X_1$ and $X_2$,  are
actually power series in $u$, and they vanish at $u=0$:
$$
X_{1,2}=
su\pm \sqrt a s{u}^{2}+\frac s 2  \left( a +1+2{s}^{2} \right)
{u}^{3}
+O\left( {u}^{4} \right) .
$$
The other two are $\bX_1:=1/X_1$ and  $\bX_2:=1/X_2$.
We now perform a partial fraction expansion of
$R(a,su,u;x)$ with respect to $x$:
\begin{eqnarray}
  R(a,su,u;x) &=&
\frac{X_1X_2(1-su(x+\bx))}{s^2u^2(1-xX_1)(1-xX_2)(1-\bx X_1)(1-\bx X_2)}
\nonumber
\\
&=&\frac{\alpha_1}{1-xX_1} + \frac{\alpha_2}{1-xX_2} +
\frac{\alpha_1\bx X_1}{1-\bx X_1}  +
\frac{\alpha_2\bx X_2}{1-\bx X_2} 
\label{R-parfrac}
\end{eqnarray}
where
$$
\alpha_1=\frac{ 1-su(X_1+\bX_1)}{s^2u^2
  (X_1-\bX_1)(1-\bX_1X_2)(X_1-\bX_2)}
$$
and symmetrically for $\alpha_2$. Since $X_{1}$ and $X_2$ are
multiples of $u$, we can read off from~\eqref{R-parfrac} the coefficient of $x^0$ in $R$:
\begin{align*}
A(a,su,u) &=[x^0]R(a,su,u;x) = {\alpha_1} + {\alpha_2} \\
&=
\frac{1-2su(X_1+X_2)+X_1X_2}{s^2u^2(X_1-\bX_1)(X_2-\bX_2) (1-X_1X_2)}.
\end{align*}
We finally eliminate $X_1$ and $X_2$ using the algebraic equations
they satisfy (recall that they cancel the denominator of
$R(a,su,u;x)$). This gives an algebraic equation for $A(a,su,u)$. This 
equation has two distinct factors, both of degree 2 in $A^2$. Only one
of these factors has some roots in $\qs[a,s][[u]]$ (where we expect
$A(a,su,u)$ to be): this factor is the minimal algebraic equation
satisfied by $A(a,su,u)$. After replacing $s$ by $s/u$ and then $u$ by
$t$, we obtain the minimal algebraic equation satisfied by $A(a,s,t)$.

 This equation only involves even powers of $s$ and $t$. Let us now replace $s^2$ by its
expression in terms of $a,t$ and $T$ derived from~\eqref{eq:T}. The
resulting equation factors into two terms, each of   
degree one in $A(a,s,t)^2$. Only one of these terms has a solution in
$\qs[a][[s,t]]$ with constant term~1, and solving it for $A(a,s,t)$
gives~\eqref{Aast}. 
\end{proof}

Another proof of~(\ref{W}-\ref{W00j}) can be given by considering another
pair of corners, as allowed to us by
Proposition~\ref{cor:equidistributed}. The following proposition explains
in particular the factor ${2j\choose j}$ occurring in~\eqref{g-00}.
\begin{Proposition}\label{prop:ABC}
  Let $v$ be a word on $\{\NN,\SS\}$. The generating function of walks whose vertical
  projection is $v$, counted by the number of horizontal steps ($s$),
  the abscissa of the endpoint ($x$) and the number of \NW\ and \SE\
  factors (or any equivalent statistic  from
  Proposition~\ref{cor:equidistributed}; variable $a$) 
  only depends on $|v|_\NN$ and $|v|_\SS$. Its value is
\beq\label{v-fixed}
\A \B^{|v|_\NN}\C^{|v|_\SS},
\eeq
where
$$
\A=\frac 1 {1-s(x+\bx)}, \quad \B=
\frac{1+ s\bx(a-1)}{1-s(x+\bx)}
\quad \hbox{and} \quad  \C= 
\frac{1+ sx(a-1)}{1-s(x+\bx)}.
$$
\end{Proposition}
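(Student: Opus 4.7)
The plan is to decompose each walk with vertical projection $v = v_1 v_2 \cdots v_m$ uniquely as $h_0\, v_1\, h_1\, v_2 \cdots v_m\, h_m$, where each $h_i$ is the (possibly empty) maximal factor over $\{\EE,\WW\}$ immediately following $v_i$ (and $h_0$ is the initial horizontal factor). By Proposition~\ref{cor:equidistributed}, I may replace the statistic $(\NW,\ES)$ by the equivalent pair $(\NW,\SE)$; this is the crucial choice, since it is these two corner types that interact cleanly with the decomposition.

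First I would observe that a \NW\ corner (an \NN\ immediately followed by \WW) can only occur at a junction where some $v_i = \NN$ and the first letter of $h_i$ is \WW; similarly a \SE\ corner occurs precisely when $v_i = \SS$ and the first letter of $h_i$ is \EE. No such corner can live strictly inside an $h_i$ (which is purely horizontal), nor at a reverse junction last-letter-of-$h_{i-1}$ followed by $v_i$ (these give HV factors, not VH factors of the required type). This localises the corner statistic entirely at the first letter of each block $h_1,\ldots,h_m$, and in particular no corner contribution attaches to $h_0$.

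Next I would compute the three block generating functions. The block $h_0$ carries no corner weight, so it contributes exactly $\A = 1/(1-s(x+\bx))$, the usual generating function of arbitrary words over $\{\EE,\WW\}$ by length and abscissa. For $i\ge 1$ with $v_i = \NN$, I would condition on whether $h_i$ is empty, starts with \WW\ (weight $as\bx$ followed by an arbitrary horizontal word, of generating function $\A$), or starts with \EE\ (weight $sx$ followed by an arbitrary horizontal word). This yields $1 + s(a\bx + x)\A$, which a one-line simplification reduces to $\B$. The symmetric case $v_i = \SS$ gives $1 + s(ax + \bx)\A = \C$.

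Finally, since the contributions of successive blocks are independent, the generating function of walks with vertical projection $v$ factors as $\A\prod_{i=1}^m H_{v_i}$ with $H_\NN = \B$ and $H_\SS = \C$, which is precisely $\A\, \B^{|v|_\NN}\, \C^{|v|_\SS}$ and depends only on $|v|_\NN$ and $|v|_\SS$. I do not anticipate any serious obstacle: the argument is essentially bookkeeping. The only subtle point is to invoke Proposition~\ref{cor:equidistributed} at the outset so that every corner attaches to the start of an $h_i$; without that switch, a \NW\ corner involving the last letter of some $h_{i-1}$ would also have to be tracked and the factorisation would be spoilt.
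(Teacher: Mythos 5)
Your proof is correct and follows essentially the same route as the paper's: decompose the walk as $w_0v_1w_1\cdots v_mw_m$ with horizontal blocks, observe that \NW\ and \SE\ corners can only arise at the junction between a vertical step $v_i$ and the first letter of $w_i$, and compute the block contributions $\A$, $1+s(a\bx+x)\A=\B$ and $1+s(ax+\bx)\A=\C$. The preliminary appeal to Proposition~\ref{cor:equidistributed} is harmless but not needed here, since the statement is formulated directly for the pair $(\NW,\SE)$.
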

\begin{proof}
  Write $v=v_1 \cdots v_n$. The walks we want to count read $w_0v_1
  w_1 \cdots v_n w_n$, where the $w_i$'s are words on the alphabet
  $\{\EE, \WW\}$. Observe that \NW\ and \SE\ factors can only be created just
  after a \NN\ or \SS\ step. In particular, the contribution of $w_0$ is
  $\A$. After a \NN\ step $v_i$, a \NW\ factor is created if
  and only if $w_i$ begins with the letter \WW; this shows that the
  contribution of $w_i$ is 
$$
1+s\, \frac{a\bx+x}{1-s(x+\bx)} ,
$$
which is precisely $\B$. Similarly, the factors $w_i$ following
  a step $v_i=\SS$ contribute the series $\C$. 
\end{proof}
\textbf{Application}. One can now rederive
the expressions~\eqref{W} and~\eqref{W0} of $\Ra(a,s,t;x,y)$
and $\Ra_{-,0}(a,s,t;x,y)$  by summing~\eqref{v-fixed},
respectively over all walks on $\{\NN, \SS\}$ and over walks on $\{\NN, \SS\}$
ending at ordinate $0$. Moreover, the series $A(a,s,t)$ given
by~\eqref{Aast} can now be understood as the generating function of loops which have
vertical projection {\sf NSNSNS...}

\subsection{General loops: $\boldsymbol {(a+1)}$-positivity and radius
  of convergence}
\label{sec:a+1}
\begin{Proposition}\label{prop:a+1-R}
  The series $\Ra_ {0,0}(a,s,t)$ that counts general loops is
  $(a+1)$-positive: for $i, j \ge 
  0$, the coefficient of $s^i t^j$ in this series is a polynomial in
  $(a+1)$ with non-negative coefficients.

Moreover,
$$
\Ra_{0,0}(1,s,t)= \sum_{i, j \ge 0} s^{2i} t^{2j}{2i+2j \choose 2i}
{2i \choose i } {2j\choose j},
$$
while
\beq\label{expr-Rm1}
\Ra_{0,0}(-1,s,t)= \sum_{i, j \ge 0} s^{2i} t^{2j}{i+j \choose i}
{2i \choose i } {2j\choose j}.
\eeq
When $s=t=u$,
$$
\Ra_{0,0}(1,u,u)= \sum_{n \ge 0} u^{2n}{2n\choose n}^2.
$$
\end{Proposition}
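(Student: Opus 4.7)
The plan is to exploit the rational expression $\Ra(a,s,t;x,y) = 1/(1 - s(x+\bx) - t(y+\by) - st(a-1)(x\by+y\bx))$ given in Proposition~\ref{prop:general}. Writing $a-1 = (a+1)-2$, the denominator becomes $P - (a+1)R$ where
\[
P = 1-s(x+\bx)-t(y+\by)+2R, \qquad R = st(x\by+y\bx),
\]
so a geometric expansion yields
\[
\Ra(a,s,t;x,y) = \sum_{n\ge 0} (a+1)^n \frac{R^n}{P^{n+1}}, \qquad \Ra_{0,0}(a,s,t) = \sum_{n\ge 0}(a+1)^n\, [x^0y^0]\frac{R^n}{P^{n+1}}.
\]
The $(a+1)$-positivity will then follow once I show that $[x^0y^0]R^n/P^{n+1}$ is a non-negative formal power series in $s$ and $t$ for every $n\ge 0$.

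To establish this non-negativity, I would use the following combinatorial interpretation: $1/P$ expands as a signed sum over \emph{super-walks} whose moves are the four unit steps $\EE,\WW,\NN,\SS$ (each of weight $1$) or combined $\NW$ or $\ES$ moves (each of weight $-2$), while $R^n/P^{n+1}$ inserts $n$ distinguished (``marked'') $\NW/\ES$ combined moves between $n+1$ super-walks. Taking $[x^0y^0]$ restricts to loops, and summing over all ways to mark $n$ corners and over the combined-versus-split representation of each remaining $\NW/\ES$ corner gives, for every underlying loop $W$ with $c(W)$ such corners, the contribution $\binom{c(W)}{n}(-1)^{c(W)-n}$. The main obstacle is to construct a sign-reversing involution on the set of decorated loops $(W,S)$ with $|S|=n$ that cancels the negative contributions; a natural approach is to locally exchange $\NW\leftrightarrow\WN$ or $\ES\leftrightarrow\SE$ at the first unmarked corner, controlled by the involution $\Phi$ of Proposition~\ref{cor:equidistributed}.

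The two explicit values of $a$ are handled by direct calculation. At $a=1$, $\Ra(1,s,t;x,y) = 1/(1-s(x+\bx)-t(y+\by))$ is the generating function of arbitrary square-lattice walks, and a multinomial extraction of $[x^0y^0]$ directly gives $\sum_{i,j\ge 0}\binom{2i+2j}{2i}\binom{2i}{i}\binom{2j}{j}s^{2i}t^{2j}$; setting $s=t=u$ and grouping by $n=i+j$, the Vandermonde-type identity $\sum_{i+j=n}\binom{2n}{2i}\binom{2i}{i}\binom{2j}{j} = \binom{2n}{n}^2$ produces the claimed closed form. At $a=-1$, starting from $\Ra_{0,0,j}(-1,s) = [x^0](1-2sx)^j(1-2s\bx)^j/(1-s(x+\bx))^{2j+1}$ and using $(1-2sx)(1-2s\bx) = (1-s(x+\bx))^2 - s^2(x-\bx)^2$ together with $(x-\bx)^2 = (x+\bx)^2 - 4$, the constant-term extraction reduces to a computation in the single variable $u = x+\bx$; routine simplification yields $[s^{2i}]\Ra_{0,0,j}(-1,s) = \binom{i+j}{i}\binom{2i}{i}$, and multiplying by $\binom{2j}{j}t^{2j}$ and summing over $j$ gives the stated formula. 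The hardest step of the whole plan is the construction of the sign-reversing involution needed to establish $(a+1)$-positivity; all remaining steps are routine algebraic or multinomial manipulations.
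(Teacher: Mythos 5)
Your treatment of the three explicit evaluations is sound. The $a=1$ formulas are exactly the paper's shuffle/Chu--Vandermonde argument. For $a=-1$, your route --- expanding $(1-2sx)^j(1-2s\bx)^j=\bigl((1-s(x+\bx))^2-s^2(x-\bx)^2\bigr)^j$ and extracting constant terms via $[x^0](x-\bx)^{2k}(x+\bx)^{2l}=(-1)^k\frac{(2k)!\,(2l)!}{k!\,l!\,(k+l)!}$, then applying Vandermonde --- does give $[s^{2i}]\Ra_{0,0,j}(-1,s)=\binom{2i}{i}\binom{i+j}{i}$ and is arguably more elementary than the paper's derivation, which instead specialises the biquadratic closed form~\eqref{Aast} and the algebraic series $T$ of~\eqref{eq:T} at $a=-1$.

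The central claim, however --- the $(a+1)$-positivity --- is not proved. Your geometric expansion in $(a+1)$ and the super-walk bookkeeping merely restate what must be shown: writing $a^{c(W)}=((a+1)-1)^{c(W)}$ already gives that the coefficient of $(a+1)^n$ is $\sum_W\binom{c(W)}{n}(-1)^{c(W)-n}$, so the entire content of the proposition sits in the sign-reversing involution that you acknowledge you have not constructed. The suggested rule (``exchange $\NW\leftrightarrow\WN$ or $\ES\leftrightarrow\SE$ at the first unmarked corner'') is far from automatic: the swap produces a different walk, in which the ``first unmarked corner'' need not be the image of the one you acted on, and the operation can create or destroy corners adjacent to the modified block, so that neither well-definedness, nor the involutive property, nor the positivity of the fixed-point contributions is evident. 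The paper avoids this entirely and argues algebraically: it reduces, via~\eqref{g-00}, to the $(a+1)$-positivity of the series $A(a,s,t)$ of~\eqref{Aast}, proves that the auxiliary series $T$ is $(a+1)$-positive by rewriting its defining equation~\eqref{eq:T} as a manifestly positive recursion, and then exhibits each factor of~\eqref{Aast} as a non-negative series in $s$, $t$, $T$ and $(a+1)$. As it stands, your argument for the first (and main) assertion is a plan, not a proof.
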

\begin{proof}
  By~\eqref{g-00}, the first statement means that for all $j$, the
  series $\Ra_{0,0,j}(a,s)$ is $(a+1)$-positive, or, equivalently,
  that the quartic series given in~\eqref{Aast} is
  $(a+1)$-positive.  Let us first prove that $T$, given by~\eqref{eq:T}, is $(a+1)$-positive. This
follows by observing that~\eqref{eq:T} can be written as
$$
T= s^2\left( 
\frac{4t^2(a+1)T}{1-t^2-t^2(a+1)^2T} + 
\frac{4T} {1- t^2(a+1)^2 \frac T{1-t^2}} + \frac 1 {1- \frac {t^2}{1-
    t^2(a+1)^2 T}}
\right).
$$
Indeed, this equation is equivalent to a recurrence relation defining the coefficient of $s^{2i}t^{2j}$ in $T$ by
recurrence on $i+j$. 
This recurrence expresses this coefficient, denoted $T_{i,j}$, as a
polynomial in $(a+1)$ and the $T_{k,\ell}$ for $k+\ell <i+j$  \emm with non-negative
coefficients,, and thus
proves $(a+1)$-positivity of $T$.

Now the rational factor in~\eqref{Aast}, one converted in partial
fractions of $T$, reads 
$$
\frac 1 {2\left(1-t-2t(a+1)T-t^2(a^2-1)T\right)}+\frac 1{
2\left(1+t+2t(a+1)T-t^2(a^2-1)T\right)}.
$$
As a rational series in $t$ and $T$ (with coefficients in $\qs[a]$),
it is thus the even part in $t$ of the series  
$$
\frac 1 {1-t-2t(a+1)T-t^2(a^2-1)T}
= \displaystyle  \frac 1 {1-t}\times \frac 1 { 1- t(a+1) T \left(2+ \frac
      {t(a+1)}{1-t}\right)} ,
$$
which can be expanded in $t$, $T$ and $(a+1)$ with non-negative
coefficients. This proves the  $(a+1)$-positivity of the rational
factor in~\eqref{Aast}.  Finally, using the equation~\eqref{eq:T}
satisfied by $T$, the square root factor in~\eqref{Aast} can be
written
$$
\displaystyle \frac 1{\displaystyle \sqrt{1- \frac {4s^2}{1-t^2-t^2(a+1)^2T}}},
$$
which is an $(a+1)$-positive series in $s$, $t$ and $T$, and thus an
$(a+1)$-positive series in $s$ and $t$. This proves finally that the series $A(a,s,t)$ is $(a+1)$-positive, as well as the
generating function $\Ra_{0,0}(a,s,t)$ of general loops.

\medskip

The rest of the proof is now easier. The expression of
$\Ra_{0,0}(1,s,t)$ follows from the fact that square lattice
loops are just shuffles of one-dimensional loops. The expression of
$\Ra_{0,0}(1,u,u)$  follows by the Chu-Vandermonde
identity. Alternatively, it can be proved combinatorially by
projecting loops on the diagonals $x= \pm y$. The
expression of $\Ra_{0,0}(-1,s,t)$ is of course more surprising, but it comes
out easily from the work we have already done. By comparing the
expressions~\eqref{g-00} of $\Ra_{0,0}$ 
and~\eqref{A-def} of $A(a,s,t)$, we see that what we have to prove reads
\begin{eqnarray*}
A(-1,s,t)&=& \sum_{i,j\ge 0} s^{2i} t^{2j} {i+j\choose i} {2i
  \choose i}
\\
&=& \sum_{i\ge 0} \frac{s^{2i}}{(1-t^2)^{i+1}} {2i  \choose i}
\\
&=&\frac 1 {\sqrt{(1-t^2)(1-4s^2-t^2)}}.
\end{eqnarray*}
This is readily proved by specializing~\eqref{eq:T} and~\eqref{Aast}
to $a=-1$. In particular, $T$ becomes rational for this value of $a$.
\end{proof}
\begin{Proposition}
\label{prop:radius-general}
Let $a\ge -1$.  The series $\Ra(a,u,u;1,1)$ that counts square lattice
walks by the length
  (variable $u$) and the number of \NW\ and \ES\  corners ($a$) has radius
  of convergence 
$$
\frac 1 {2+\sqrt{2+2a}}.
$$
The same holds for the series $\Ra_{-,0}(a,u,u;1)$ that counts walks
ending on the $x$-axis.

The series $W(a,u):=\Ra_{0,0}(a,\sqrt u,\sqrt u;1)$ that counts loops
by their half-length and number of \NW\ and \ES\  corners radius of
convergence given by~\eqref{radius}.
\end{Proposition}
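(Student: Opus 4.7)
The first two statements are routine. Setting $x=y=1$, $s=t=u$ in~\eqref{W} yields $R(a,u,u;1,1)=1/(1-4u-2u^2(a-1))$, and the smallest positive root of the quadratic denominator is $1/(2+\sqrt{2+2a})$ for $a\geq -1$. For $R_{-,0}(a,u,u;1)$, setting $x=1$, $s=t=u$ in~\eqref{W0} and factoring the expression under the square root gives $(1-4u-2u^2(a-1))(1+2u^2(a-1))$: the first factor contributes the same root, while for $a\in(-1,1)$ the second has smallest positive root $1/\sqrt{2(1-a)}$, strictly larger since $\sqrt{2(1-a)}\leq 2+\sqrt{2(1+a)}$ on $[-1,1]$ (equality only at $a=-1$), and for $a\geq 1$ the second factor is always positive.

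For $W(a,u)$, I would extract $[x^0y^0]$ from~\eqref{W} for $R(a,\sqrt u,\sqrt u;x,y)$ as a double contour integral on the torus $|x|=|y|=1$. Writing $x=e^{i\theta}$, $y=e^{i\phi}$, $v=\sqrt u$,
\[
W(a,u)=\frac{1}{(2\pi)^2}\int_0^{2\pi}\!\!\int_0^{2\pi}\frac{d\theta\,d\phi}{D(\theta,\phi)},\qquad D:=1-2v(\cos\theta+\cos\phi)-2v^2(a-1)\cos(\theta-\phi).
\]
Let $u_c(a)$ denote the smallest $u>0$ for which $D$ vanishes somewhere on the torus. For $u<u_c(a)$, $D$ is uniformly positive, so the integral converges absolutely and equals the series $W(a,u)$, giving $\rho_W(a)\geq u_c(a)$. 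The plan is then to identify $u_c(a)$ with~\eqref{radius} and to establish the reverse inequality.

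To compute $u_c(a)$, maximise $G(\theta,\phi):=2v(\cos\theta+\cos\phi)+2v^2(a-1)\cos(\theta-\phi)$ over the torus and find the smallest $v$ such that $\max G=1$. Summing the two first-order conditions gives $\sin\theta+\sin\phi=0$, so $\phi=-\theta$ or $\phi=\pi+\theta$. The second case produces $G=2v^2(1-a)$, yielding only the subordinate candidate $u=1/(2(1-a))$ (always dominated by the candidates below). In the case $\phi=-\theta$, the remaining condition factors as $\sin\theta\bigl(1+2v(a-1)\cos\theta\bigr)=0$: the branch $\theta=0$ gives $4v+2v^2(a-1)=1$, with smallest positive root $v=1/(2+\sqrt{2+2a})$, hence $u_1=1/(2+\sqrt{2+2a})^2$, valid for all $a\geq -1$; the interior branch $\cos\theta=1/(2v(1-a))$, valid only when $|\cos\theta|\leq 1$, substituted into $G=1$ collapses to $a+2v^2(a-1)^2=0$, giving $u_2=-a/(2(a-1)^2)$, and the constraint $|\cos\theta|\leq 1$ translates into $a\leq -1/2$. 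A direct comparison shows $u_2\leq u_1$ precisely on $[-1,-1/2]$ (with equality at $a=-1/2$), reproducing the piecewise formula~\eqref{radius}.

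For the reverse inequality $\rho_W(a)\leq u_c(a)$, I would invoke the $(a+1)$-positivity of Proposition~\ref{prop:a+1-R}: for each $a>-1$ the series $W(a,u)$ has non-negative coefficients, so by Pringsheim's theorem $\rho_W(a)$ is a singularity of $W(a,\cdot)$ on the positive real axis. A Laplace-type expansion at the dominant critical point $(\theta^*,\phi^*)$ shows the Hessian of $D$ there is positive definite (one saddle for $a>-1/2$, a pair of symmetric saddles for $a\in(-1,-1/2)$), hence $1/D$ behaves like $c/r^2$ in polar coordinates at $u=u_c$, which is non-integrable in two dimensions; this forces $W(a,u)\to\infty$ as $u\to u_c(a)^-$, giving $\rho_W(a)\leq u_c(a)$. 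The boundary case $a=-1$ is handled directly from the explicit expansion~\eqref{expr-Rm1} and Stirling. The main technical obstacle is the bifurcation at $a=-1/2$, where the two interior saddles coalesce with the boundary saddle at $\theta=0$ and the Hessian degenerates; the resulting singularity of $1/D$ is still non-integrable (with a higher-order exponent), but requires a separate local analysis.
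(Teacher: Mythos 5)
Your treatment of the first two series is essentially the paper's: read off the poles of the rational series and the branch points of the algebraic one from~\eqref{W} and~\eqref{W0} and compare them (you should still note that for $a>1$ the second root of $1-4u-2u^2(a-1)$ is negative with larger modulus, so no complex or negative singularity comes closer; this is the role of the case analysis and Pringsheim in the paper's proof). For $W(a,u)$, however, your route is genuinely different. The paper obtains, by creative telescoping, a second-order linear ODE for $W(a,u)$, so that the radius must be one of seven explicit candidate functions of $a$; it then pins down which candidate by computing the radius exactly at $a=\pm1$ from Proposition~\ref{prop:a+1-R} and by using $(a+1)$-positivity (via Lemma~\ref{lem:cont}) to get continuity and monotonicity of the radius, plus Pringsheim. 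You instead represent $W$ as a constant-term integral over the torus and locate the first value $u_c(a)$ at which the denominator $D$ vanishes, recovering the piecewise formula~\eqref{radius} from the bifurcation of critical points at $a=-1/2$; this is computer-algebra-free and explains the crossover structurally, which is attractive. Your critical-point computation is correct and does reproduce~\eqref{radius}.

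The gap is in the pivotal identification ``for $u<u_c$ the integral converges absolutely and equals the series, giving $\rho_W\ge u_c$''. The termwise geometric expansion of $1/D$ under the integral requires $\sup_{\mathrm{torus}}|G|<1$, not merely $\max G<1$; for $a<1$ the minimum of $G$ (attained near $(\theta,\phi)=(\pi,\pi)$, where $G=-4v-2v^2(1-a)$) reaches $-1$ strictly before the maximum reaches $+1$, so in exactly the regime near $u_c$ your justification breaks down, and ``integral $=$ series'' is no longer automatic. What the integral does give on all of $[0,u_c)$ is an analytic continuation of $W(a,\cdot)$ along the positive real axis; to convert that into $\rho_W(a)\ge u_c(a)$ you must invoke non-negativity of the coefficients of $W(a,\cdot)$ for $a\ge-1$ (trivial for $a\ge0$, but for $a\in(-1,0)$ this is precisely the $(a+1)$-positivity of Proposition~\ref{prop:a+1-R}) together with Pringsheim's theorem — i.e.\ the same positivity input the paper uses, which you cite only for the upper bound. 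The upper bound then also rests on this identification (plus Fatou or monotone convergence to pass the blow-up of the integral at $u_c$ to $W$), and on carrying out the degenerate local analysis at $a=-1/2$, which you flag but do not do; with the quartic behaviour of $D$ along the direction $\phi=-\theta$ the singularity is indeed still non-integrable, but this needs to be written out. With these repairs your argument goes through and constitutes a valid alternative proof; as submitted, the lower-bound step is not justified.
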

\begin{proof}
It follows from Proposition~\ref{prop:general} that
$$
\Ra(a,u,u;1,1)= \frac 1 {1-4u -2{u}^{2}(a-1) }.
$$
This rational series has two poles,
$$
\rho_1=\frac 1 {2+\sqrt{2+2a} }\quad \hbox{and} \quad \rho_2=\frac 1 {2-\sqrt{2+2a}
},
$$
the latter being only defined for $a\not=1$ (recall that we already
assume that $a\ge -1$). For $a \in [-1, 1)$, both
poles are real and positive, with $\rho_1 \le \rho_2$, and thus the
radius is $\rho_1$. For $a\ge 1$, $\rho_1$ is the only positive
singularity, and hence must be the radius by Pringsheim's
theorem~\cite[Thm.~IV.6, p.~240]{flajolet-sedgewick} (we could alternatively
invoke the continuity Lemma~\ref{lem:cont}).

Let us now consider walks ending on the $x$-axis, with \gf\
$$
\Ra_{-,0}(a,u,u;1)= \frac 1 {\sqrt{(1-4u -2{u}^{2}(a-1))(1+2u^2(a-1)) }}.
$$
This series has four singularities, namely $\rho_1$ and $\rho_2$ given
above, as well as
$$
\rho_{3,4}= \pm \frac 1 {\sqrt{2-2a}}
$$
which are undefined if  $a=1$. For $a \in [-1, 1)$, all
singularities are real, and $\rho_1$ has minimal modulus, and hence is
the radius. For $a\ge 1$,  the only real positive
singularity is $\rho_1$, which must be the radius by  Pringsheim's
theorem.

Finally, the length generating function of loops satisfies
$$
W(a,u^2)=\Ra_{0,0}(a,u,u)=[x^0y^0] \Ra(a,u,u;x,y).
$$
The Mathematica package HolonomicFunctions~\cite{koustchan-creative}
allows one to construct a linear differential equation (DE) in $u$
satisfied by this series, starting 
from a system  of DEs satisfied by the rational series
$\Ra(a,u,u;x,y)$ (one with respect to $u$, one with
respect to $x$, one with respect to $y$).
The DE that we obtain for $W(a,u^2)$ has order two. It translates into a DE of
order 2 for  $W(a,u)$, in which the coefficient of the
second derivative is
\begin{align*}
& u\left(1+2u(a-1) \right)  \left(a+2u (a-1)^2 \right)  \big(1-4u(a+3)+
4{(a-1)}^{2}{u}^{2} \big) \times \\ & \left( a+ u (a-1)(a-3) +2u^2(a-1)^3
\right) .
\end{align*}
The general theory of linear DEs~\cite[p.~519]{flajolet-sedgewick} tells us that the singularities of
$W(a, u)$ are found among the seven roots of this
polynomial, namely, with the above notation:
\begin{align}
& 0, \quad \rho_{1,2}^2=\frac 1 {(2\pm\sqrt{2a+2})^2},\quad
\rho_3^2=\frac 1{2(1-a)}, \quad -\frac a{2(a-1)^2}, \nonumber \\
\label{rho-cand}
& \frac{3-a\pm\sqrt{(9-7a)(1+a)}}{4(a-1)^2}.
\end{align}
It follows from the expressions of $\Ra_{0,0}(a,u,u)$ at $a=1$ and
$a=-1$ given in Proposition~\ref{prop:a+1-R} that the radius of $W(a,
u)$ is $1/16$ at 
$a=1$ and $1/8$ at $a=-1$ (the proof is similar to the proof of the
first part of Proposition~\ref{prop:radiusQ}). Moreover, since $W(a, u)$ is $(a+1)$-positive, the radius is a continuous function of
$a$ on $(-1,+\infty)$, non-increasing on $[-1, +\infty)$. And by
  Pringsheim's theorem, the radius is one of the singularities for
  $a\ge -1$. It then follows from an elementary study of the
  functions~\eqref{rho-cand} that the radius of $W(a,u)$ is 
$\rho_1^2$
at $a=1$, and
  then, by continuity, for $a\in [-1/2, +\infty)$ 
(since $\rho_1^2$   does not meet any other root in $(-1/2,+\infty)$). 
For $a\in[-1, -1/2]$, we
  have three candidates that would satisfy continuity at $-1/2$ (see Figure~\ref{fig:singularities}), but
  only $-\frac a{2(a-1)^2}$ remains below the value $1/8$, and there are no further intersection points with the
  other two candidates in the
  interval $[-1, -1/2)$.
\end{proof}

\begin{figure}[ht]
\begin{center}
\includegraphics[width=5cm]{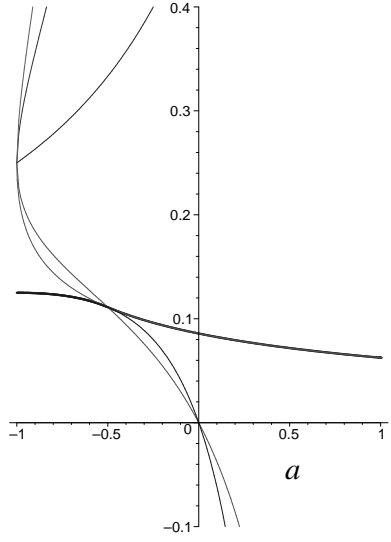}
\caption{The seven candidates for the radius of $W(a,u)$, shown on
  the interval $[-1,1]$. The thick line is the radius.} 
\label{fig:singularities}
\end{center}
\end{figure}

\subsection{Half plane walks}
\label{sec:half-plane}
We obtain similar results for loops confined to
the upper half plane 
$\{(x,y): y\ge 0\}$.
\begin{Proposition}\label{prop:a+1-H}
  The generating function of half plane loops, counted by horizontal steps $(s)$,
  vertical steps ($t$), and \NW\ and \ES\  factors ($a$), is
$$
\Ha_{0,0}(a,s,t)= \sum_{j\ge 0} \frac 1 {j+1}{2j\choose j}
t^{2j} \Ra_{0,0,j}(a,s),
$$
where $ \Ra_{0,0,j}(a,s)$ is given by~\eqref{W00j}.
This series is $(a+1)$-positive.
Moreover,
$$
\Ha_{0,0}(1,s,t)= \sum_{i, j \ge 0} s^{2i} t^{2j}\frac 1{j+1} {2i+2j \choose 2i}
{2i \choose i } {2j\choose j},
$$
while
\beq\label{expr-Hm1}
\Ha_{0,0}(-1,s,t)= \sum_{i, j \ge 0} s^{2i} t^{2j}\frac 1{j+1}{i+j \choose i}
{2i \choose i } {2j\choose j}.
\eeq
For  $a\ge -1$, 
the series $\Ha_{0,0}(a,\sqrt u,\sqrt u)$ that counts half plane loops
 by half-length and corners  has radius of convergence given
by~\eqref{radius}. 
\end{Proposition}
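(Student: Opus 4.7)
The plan is to mirror the treatment of general loops in Section~\ref{sec:general}, replacing the sum over all vertical projections by a sum over Dyck paths.

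Step 1 (formula). A half plane loop is determined by its vertical projection $v$, which must be a Dyck word on $\{\NN,\SS\}$, together with the interlaced horizontal content. By Proposition~\ref{prop:ABC}, which applies equally well to the $(\NW,\ES)$ statistic thanks to Proposition~\ref{cor:equidistributed}, the generating function of walks with a fixed vertical projection $v$, counted by horizontal steps, endpoint abscissa and corners, equals $\A\B^{|v|_\NN}\C^{|v|_\SS}$; in particular it depends on $v$ only through $|v|_\NN$ and $|v|_\SS$. Summing over the $C_j$ Dyck paths with $j$ up-steps, weighting by $t^{2j}$, and extracting $[x^0]$ to enforce the loop condition gives
$$\Ha_{0,0}(a,s,t) = \sum_{j\ge 0} C_j\, t^{2j}\,[x^0]\A\B^j\C^j = \sum_{j\ge 0} \frac{1}{j+1}\binom{2j}{j} t^{2j} \Ra_{0,0,j}(a,s),$$
since a direct calculation identifies $\A\B^j\C^j$ with the integrand of~\eqref{W00j}.

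Step 2 ($(a+1)$-positivity and the special values). The proof of Proposition~\ref{prop:a+1-R} established that $A(a,s,t)=\sum_j t^{2j}\Ra_{0,0,j}(a,s)$ is $(a+1)$-positive, so each $\Ra_{0,0,j}(a,s)$ is $(a+1)$-positive with coefficients in $\ns[s^2,a+1]$, and multiplying by the positive integer $C_j$ preserves this property. The formulas at $a=\pm 1$ follow by extracting $[t^{2j}]$ from the closed forms of $\Ra_{0,0}(\pm 1,s,t)$ in Proposition~\ref{prop:a+1-R}: this yields
$\Ra_{0,0,j}(1,s)=\sum_i s^{2i}\binom{2i+2j}{2i}\binom{2i}{i}$ and $\Ra_{0,0,j}(-1,s)=\sum_i s^{2i}\binom{i+j}{i}\binom{2i}{i}$, which, combined with the expression of $\Ha_{0,0}$ from Step~1, give the two announced identities.

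Step 3 (radius). For $a\ge -1$, set $w_n:=[u^n]W(a,u)$ and $h_n:=[u^n]\Ha_{0,0}(a,\sqrt u,\sqrt u)$. From the respective decompositions,
$$w_n=\sum_{j=0}^n \binom{2j}{j}\Ra_{0,0,j,n-j}(a),\qquad h_n=\sum_{j=0}^n C_j\,\Ra_{0,0,j,n-j}(a),$$
where $\Ra_{0,0,j,i}(a):=[s^{2i}]\Ra_{0,0,j}(a,s)\ge 0$ for $a\ge -1$ by the $(a+1)$-positivity proved in Step~2. Since $C_j\le \binom{2j}{j}=(j+1)C_j\le (n+1)C_j$ for $0\le j\le n$, we obtain the sandwich
$$h_n\le w_n\le (n+1)\,h_n,$$
so $\limsup w_n^{1/n}=\limsup h_n^{1/n}$ and the radii of $W(a,u)$ and $\Ha_{0,0}(a,\sqrt u,\sqrt u)$ coincide. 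By Proposition~\ref{prop:radius-general} this common value is the one given in~\eqref{radius}. The main obstacle, modest given the machinery already developed, is precisely this squeeze: it rests essentially on the non-negativity of $\Ra_{0,0,j,i}(a)$ for $a\ge -1$, which is the substance of $(a+1)$-positivity. Everything else is routine bookkeeping.
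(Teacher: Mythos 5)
Your proposal is correct and follows essentially the same route as the paper: the half-plane formula comes from Proposition~\ref{prop:ABC} by summing over the $C_j$ Dyck vertical projections, $(a+1)$-positivity is inherited from that of the $\Ra_{0,0,j}(a,s)$ established in Proposition~\ref{prop:a+1-R}, the $a=\pm 1$ values are read off from the general-loop counterparts, and the radius is obtained by exactly the same coefficient sandwich (the paper writes it as $\frac{1}{n/2+1}[u^n]\Ra(a,u,u)\le[u^n]\Ha(a,u,u)\le[u^n]\Ra(a,u,u)$) combined with Proposition~\ref{prop:radius-general}.
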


\begin{proof}
 The expression of $\Ha_{0,0}(a,s,t)$ follows from the analogous
 expression~\eqref{g-00} obtained for $\Ra_{0,0}(a,s,t)$ by applying
 Proposition~\ref{prop:ABC}. The same proposition allows us to derive the
 expressions of $\Ha_{0,0}(1,s,t)$ and $\Ha_{0,0}(-1,s,t)$ from their
 counterparts of Proposition~\ref{prop:a+1-R}.

The $(a+1)$-positivity of $\Ra_{0,0,j}(a,s,t)$ implies the
$(a+1)$-positivity of $\Ha_{0,0}(a,s,t)$.

As far as the radius of convergence is concerned, we have for half plane loops
$$
\Ha_{0,0}(a,u,u)= \sum_{j\ge 0} \frac 1 {j+1} {2j\choose j} u^{2j}
\Ra_{0,0,j}(a,u),
$$
while   for general loops,
$$
\Ra_{0,0}(a,u,u)= \sum_{j\ge 0} {2j\choose j} u^{2j}
\Ra_{0,0,j}(a,u).
$$
We have proved in Proposition~\ref{prop:a+1-R} that $\Ra_{0,0,j}(a,u)$ is
$(a+1)$-positive. Hence for $a\ge -1$,
$$
\frac 1{n/2+1} [u^n] \Ra(a,u,u)\le [u^n] \Ha(a,u,u) \le [u^n]
\Ra(a,u,u),
$$
and this proves that $\Ha(a,u,u)$ has the same radius of convergence as $\Ra(a,u,u)$.
\end{proof}

\noindent
\textbf{Remark}. One can also construct the (algebraic) generating functions
$\Ha(a,s,t;x,y)$ (resp.~$\Ha_{-,0}(a,s,t;x)$)  that count
walks confined to the upper half plane (resp.~and ending on the
$x$-axis). When $x=y=1$ and $s=t=u$, the radius of each of these series is
found to be $
\frac 1 {2+\sqrt{2+2a}},
$
as in the unconfined case
(Proposition~\ref{prop:radius-general}). This confirms that the
transition found at $a=-1/2$ is really a property of loops.

\section{Asymptotic Analysis}
\label{sec:asympt}

\subsection{Statement of the results}
Recall the relationship between the series $Q(a,u)$ and $S\equiv S(t)$
established in Corollary~\ref{cor:QS}: 
\beq\label{QPbis}
     \Q\left(-\Sp ,\frac t {(1+\Sp )^2}\right)= \frac{1+\Sp }{1-\Sp }
\eeq
with $S=1/(1-\Sp)$. Our main theorem below tells us that $S(t)$
reaches its radius of convergence when the pair 
$(-\Sp , t (1+\Sp )^{-2})$ reaches the \emm critical curve,
$\{(a, \rho_Q(a)), a \ge -1\}$, where $\rho_Q(a)$ denotes the radius of the
series $\Q(a, \cdot)$. See Figure~\ref{fig:asympt} for an
illustration. However, this theorem relies on the conjectures
studied in the previous section.

\begin{Theorem}\label{thm:radius}
 Assume that the series $Q(a,u)$ is $(a+1)$-positive, and that
 $Q'_2(a,\rho_Q(a))<\infty$ for $-1/3\le a\le 0$. 
Let $\xc$ be the
 radius of convergence of $S=1/(1-\Sp)$. Then 
 $t/(1+\Sp(t))^2$ increases   on the interval $[0, \xc]$,  and on
 this interval,
\beq\label{bound}
\frac t{(1+\Sp(t))^2} \le \rho_Q(-\Sp(t)),
\eeq
with equality if and only if  $t=\xc$. Moreover, $\Sp(\xc)\le 1/3$.
\end{Theorem}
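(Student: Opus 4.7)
The plan is to study the parametric curve
\[
t \mapsto \bigl(a(t),\, u(t)\bigr) := \bigl(-\Sp(t),\, t/(1+\Sp(t))^2\bigr)
\]
inside the domain of analyticity of $Q$, and to relate the singularity of $\Sp$ at $t=\xc$ to the critical curve $\{(a,\rho_\Q(a)):a\ge -1\}$. For $t\in[0,\xc)$, the series $\Sp(t)$ converges and is real-analytic with nonnegative coefficients, so $0\le\Sp(t)<1$ and both $a(t)$ and $u(t)$ are well-defined and continuous on $[0,\xc]$.

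I would first establish the monotonicity of $u(t)$ by differentiating the functional equation~\eqref{QPbis} implicitly, which yields
\[
u'(t)\, Q'_2(-\Sp,u) \;=\; \Sp'(t)\left( Q'_1(-\Sp,u) + \frac{2}{(1-\Sp)^2}\right).
\]
Writing $Q(a,u)=\sum_n P_n(a+1)u^n$ via the $(a+1)$-positivity assumption (Conjecture~\ref{conj:a+1}) with $P_n\in\ns[x]$, both $Q'_1$ and $Q'_2$ are power series in the variables $(a+1,u)$ with nonnegative coefficients; hence their values at $(-\Sp(t),u(t))$ are nonnegative on $[0,\xc]$, with $Q'_2>0$ as soon as $u>0$. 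Combined with $\Sp'(t)\ge 0$, this yields $u'(t)\ge 0$, as claimed.

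The inequality $u(t)\le\rho_\Q(-\Sp(t))$ is then immediate: for $t<\xc$, the right-hand side $(1+\Sp)/(1-\Sp)$ is finite, so the series $Q(-\Sp(t),u(t))$ must converge, forcing $u(t)\le\rho_\Q(-\Sp(t))$; the inequality extends to $t=\xc$ by continuity of $\Sp$, $u$, and $\rho_\Q$ (the latter via Proposition~\ref{prop:radiusQ} plus $(a+1)$-positivity). To rule out strict inequality at $t=\xc$, I would apply the implicit function theorem to
\[
F(p,t) \;:=\; Q\bigl(-p,\,t/(1+p)^2\bigr) - \frac{1+p}{1-p},
\]
whose partial derivative
\[
F_p \;=\; -Q'_1 - \frac{2t}{(1+p)^3}\,Q'_2 - \frac{2}{(1-p)^2}
\]
is strictly negative at $(\Sp(\xc),\xc)$ by the same positivity argument. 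Hence if $u(\xc)<\rho_\Q(-\Sp(\xc))$ then $F$ would be analytic in a neighbourhood of this point, and $\Sp$ would extend analytically past $\xc$, contradicting the definition of $\xc$.

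The final and most delicate claim, $\Sp(\xc)\le 1/3$, is where the assumption $Q'_2(a,\rho_\Q(a))<\infty$ for $a\in[-1/3,0]$ must come into play, the threshold $1/3$ matching precisely the range $[-1/3,0]$ of that hypothesis. The strategy would be a proof by contradiction: suppose $\Sp(\xc)>1/3$, set $a^*:=-\Sp(\xc)<-1/3$, and use the equality from the previous paragraph together with the explicit form of $\rho_\Q$ on $[-1,-1/2]$ given in Conjecture~\ref{prop:Q0} to derive a contradiction. The main obstacle is the absence of an explicit expression for the critical-curve value $Q(a,\rho_\Q(a))$ as a function of $a$; one must reason indirectly via analytic continuation, comparing the values of $(1-a)/(1+a)$ along the critical curve with the qualitative change of singularity type of $Q(a,\cdot)$ at $u=\rho_\Q(a)$ as $a$ crosses $-1/3$. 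This last step appears to be the hardest, as it uses finely the full strength of Conjecture~\ref{conj:conv} and the regime in which it is assumed.
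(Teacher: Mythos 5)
There is a genuine gap, and it sits at the heart of the theorem. Your claim that the bound~\eqref{bound} is ``immediate'' for $t<\xc$ because the right-hand side of~\eqref{QPbis} is finite does not follow: \eqref{QPbis} is an identity of \emph{formal} power series in $t$, and to evaluate it at a numerical $t$ and read off convergence of $Q$ you must already know that the pair $\left(-\Sp(t),\, t(1+\Sp(t))^{-2}\right)$ lies in the convergence domain of $Q$ --- which is exactly what is in question. (Your monotonicity step, which evaluates $Q'_1$ and $Q'_2$ at that pair, presupposes the same containment, so the order of your argument is also circular.) A positive-term rearrangement that would legitimise such an evaluation works for $C$ but not directly for $Q$, since $(1-\Sp(t))^k$ and $(1+\Sp(t))^{-2n}$ have coefficients of mixed sign; this is why the paper first proves that $(\Sp(t), tS(t)^2)$ stays strictly inside the convergence domain of $C$ (Lemma~\ref{lem:insideC}) and only then transfers information to the $(a,u)$-plane through the maps~\eqref{UBAT} and the open mapping theorem. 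Moreover, your implicit-function argument at $(\Sp(\xc),\xc)$ only reproves the easy half (the paper's Lemma~\ref{lem:tight}): that the curve must meet the critical curve no later than $\xc$. The hard half of the theorem --- that equality cannot occur at any $t_1<\xc$, i.e.\ strictness on $[0,\xc)$ --- is entirely missing from your proposal; in the paper it requires analytically continuing $u\mapsto Q(a_1,u)$ past $u_1=\rho_Q(a_1)$ via the $Q$--$C$ relation~\eqref{CfromQ}, the analyticity of $C$ at $(b_1,v_1)$ guaranteed by Lemma~\ref{lem:insideC}, and the hypothesis $Q'_2(a,\rho_Q(a))<\infty$, contradicting Pringsheim's theorem.

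You also misattribute the origin of the bound $\Sp(\xc)\le 1/3$. In the paper it has nothing to do with the finiteness of $Q'_2$, nor with Conjectures~\ref{prop:Q0} or~\ref{conj:conv}: it is purely combinatorial, obtained by inserting the inequality $C(b,v)<\tfrac12$ (Lemma~\ref{lem:C<1/2}, proved in Appendix~\ref{APP:CSmall}) into $S=1+C(\Sp,tS^2)$, which yields $S\le \tfrac32$, hence $\Sp\le\tfrac13$ (Corollary~\ref{cor:Sp13}). The logical flow is the reverse of yours: the $1/3$ bound is established first, and it is what guarantees $a_1=-\Sp(t_1)\in[-1/3,0]$, the precise range in which the hypothesis $Q'_2(a,\rho_Q(a))<\infty$ is then invoked in the continuation argument. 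Your proposed route to the $1/3$ bound invokes the conjectured formula for $\rho_Q$ on $[-1,-1/2]$, i.e.\ Conjecture~\ref{prop:Q0}, which is not among the theorem's hypotheses, and you acknowledge yourself that this step is not carried out; without it, and without the material of the first paragraph, the proposal does not prove the statement.
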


\begin{figure}[ht]
\begin{center}
\includegraphics[height=6cm]{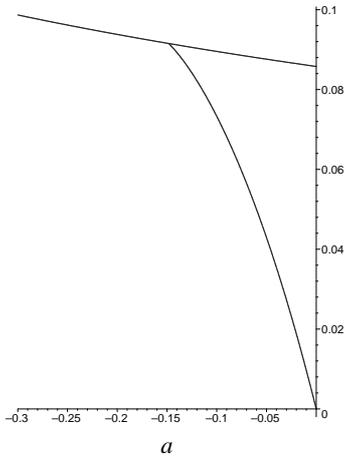}
\caption{The top curve shows the conjectured radius of $Q(a,
  \cdot)$. The bottom curve shows the points $\left(-\Sp(t), t/(1+\Sp(t))^2\right)$ (estimated from the first 70 coefficients of $\Sp$)
as $t$ grows from 0 to $t_c$.} 
\label{fig:asympt}
\end{center}
\end{figure}

This section is devoted to the proof of this theorem. Before we begin
with the proof, let us make the value of $\xc$ more
explicit thanks to the conjectured expression of $\rho_Q(a)$
(Conjecture~\ref{prop:Q0}). 
\begin{Corollary}
Assume that the assumptions of the above theorem hold, as well as (the
first part of) Conjecture~\ref{prop:Q0}.   Then the radius of
convergence of $S$ is 
\beq\label{tc-a}
\xc= \left(1-\frac{\sqrt{2+2a}}2\right)^2,
\eeq
where $a=-\Sp(\xc)$ satisfies
$$
Q\left(a,\frac 1 {(2+\sqrt{2+2a})^2}\right)= \frac{1-a}{1+a}.
$$
\end{Corollary}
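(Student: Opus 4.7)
The plan is to read off the corollary directly from the equality case of Theorem~\ref{thm:radius} combined with the explicit form of $\rho_Q$ supplied by Conjecture~\ref{prop:Q0}. Set $a:=-\Sp(\xc)$. Since Theorem~\ref{thm:radius} guarantees $\Sp(\xc)\le 1/3$, we have $a\ge -1/3$, which is safely inside the regime $a\ge -1/2$ where Conjecture~\ref{prop:Q0} asserts
$$
\rho_Q(a)=\frac{1}{(2+\sqrt{2+2a})^2}.
$$
This placement into the first branch of~\eqref{radius} (rather than the $a\in[-1,-1/2]$ branch) is really the only conceptual step; everything else is bookkeeping.

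Next I would use the equality case of~\eqref{bound} at $t=\xc$, which asserts
$$
\frac{\xc}{(1+\Sp(\xc))^2}=\rho_Q(-\Sp(\xc))= \frac{1}{(2+\sqrt{2+2a})^2}.
$$
Since $1+\Sp(\xc)=1-a$, this gives $\xc=(1-a)^2/(2+\sqrt{2+2a})^2$. Setting $b=\sqrt{2+2a}$ so that $1-a=(4-b^2)/2=(2-b)(2+b)/2$, the factor $(2+b)$ cancels and one obtains
$$
\xc=\left(\frac{2-b}{2}\right)^2=\left(1-\frac{\sqrt{2+2a}}{2}\right)^2,
$$
which is~\eqref{tc-a}.

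Finally, I would determine $a$ by taking the limit $t\uparrow \xc$ in the functional equation~\eqref{QPbis} of Corollary~\ref{cor:QS}. For $t<\xc$ the left-hand argument $u=t/(1+\Sp(t))^2$ lies strictly below $\rho_Q(-\Sp(t))$, so the identity holds termwise. Because $Q(a,u)$ has non-negative coefficients in $u$ and, by hypothesis, $Q'_2(a,\rho_Q(a))<\infty$ for $a\ge -1/3$, Abel's theorem (applied in $u$ with $a$ fixed at $-\Sp(\xc)\ge -1/3$) implies that $Q(a,u)$ is continuous up to its radius $\rho_Q(a)$; also $\Sp(t)$ extends continuously to $t=\xc$ by the monotonicity provided in Theorem~\ref{thm:radius}. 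Passing to the limit in~\eqref{QPbis} yields
$$
Q\!\left(a,\frac{1}{(2+\sqrt{2+2a})^2}\right)=\frac{1-a}{1+a},
$$
which is the announced equation for $a$. The main obstacle is precisely the Abel-type continuity at the boundary $u=\rho_Q(a)$, where one needs the assumption that $Q'_2(a,\rho_Q(a))$ is finite; this is the only place where the hypothesis on $Q'_2$ (as opposed to merely the radius) is used.
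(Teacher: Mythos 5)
Your proposal is correct and follows essentially the same route as the paper: use $\Sp(\xc)\le 1/3$ to place $a=-\Sp(\xc)$ in the first branch of Conjecture~\ref{prop:Q0}, use the equality case of~\eqref{bound} to get $\xc=(1-a)^2\rho_Q(a)$ and simplify, and then evaluate~\eqref{QPbis} at $t=\xc$. The only difference is presentational: the boundary evaluation of~\eqref{QPbis} at $t=\xc$ that you justify with an Abel-type continuity argument (using the finiteness of $Q'_2$ at the radius) is, in the paper, already built into the proof of Theorem~\ref{thm:radius} (via Lemma~\ref{lem:tight}), so the paper's proof of the corollary reduces to a direct substitution.
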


\begin{proof}
  Write $a=-\Sp(\xc)$. By Theorem~\ref{thm:radius}, we have $\Sp(\xc)
  \le 1/3 <1/2$, and so by Conjecture~\ref{prop:Q0},
$$
\rho_Q(a)= \frac 1 {(2+\sqrt{2+2a})^2}.
$$
Since~\eqref{bound} is an equality at $t=\xc$, 
$$
\xc= (1-a)^2\rho_Q(a)
$$
and this gives~\eqref{tc-a}. The second identity of the corollary is
obtained by setting $t=\xc$ in~\eqref{QPbis}. 
\end{proof}

Using the first 100 terms of the expansion of $Q(a,u)$ in $u$, 
we estimate $a$ between $-0.15$ and   $-0.148$, which
would give
$$
1/\xc = \limsup s_n^{1/n} \in [8.25, 8.29].
$$
This should be compared with two natural upper bounds on $s_n$:
 the number of operation
sequences  of length $2n$ that output eagerly (that is, have no \NW\
nor \ES\ corner), and the number of  standard operation sequences of length $2n$. According to
Conjecture~\ref{prop:Q0},  the growth constant for  operation
sequences that output eagerly would be $1/\rho_Q(0)= (2+\sqrt 2)^2\simeq 11.6$. Now  the arguments of
Theorem~\ref{thm:eqs} imply that the
generating function 
$\tilde S(t)$ of standard operation sequences satisfies
\beq\label{S-tilde}
\tilde S(t)= 1+ C(1, t\tilde S(t)^2)= 1+ C( t\tilde S(t)^2)
\eeq
if we abbreviate $C(1,v)$ by $C(v)$.  With the same convention,
$$
Q(u)=1+2C( uQ(u)^2).
$$
Recall that $Q(u)\equiv Q(1,u)$ has radius $1/16$. Moreover,
$Q_c:=Q(1/16)=8-64/(3\pi)$. One derives from this that the radius of
$C(v)$ is $Q_c^2/16$, and that at this point $C$ takes the value
$(Q_c-1)/2$. 
Returning to~\eqref{S-tilde}, this implies that $\tilde S$ equals
$(Q_c+1)/2$ at its radius, and that this radius is
$$
\tilde t_c= \frac{Q_c ^2}{4(Q_c+1)^2}.
$$
Taking the reciprocal, this gives the estimate $13.3$ for the growth
constant of standard operation sequences, which is larger than the
growth constant obtained for sequences that output eagerly.

We can also obtain lower bounds on $1/t_c$ directly using the fact
that $(s_n)_{n \geq 0}$ is a super-multiplicative sequence, so
$s_n^{1/n}$ is increasing. At $n = 100$ this gives a bound $7.2 <
t_c$. On the other hand we can do a bit better using $S = 1/(1 -
\Sp)$. Specifically, 
on the right hand side we can replace $\Sp$ by a
polynomial truncation of its Taylor series to obtain a power series
dominated term by term by $S$ whose radius of convergence therefore is
not smaller than $t_c$. This approximation gives $7.38 < 1/t_c$, using
the truncation of $\Sp$ of degree 100.  
%

\subsection{Relating  the singularities of  $S$ and $C$}
We begin with a simple lemma.
\begin{Lemma}\label{lem:S}
The series $\Sp(t)$ and $S(t)=1/(1-\Sp(t))$ have the same radius of
convergence $\xc$. Moreover,  $\Sp(\xc) <1$, so that  $S(\xc)< \infty$.
\end{Lemma}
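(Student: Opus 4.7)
The plan is to first establish the nontrivial claim $\Sp(\xc) < 1$ (equivalently $S(\xc) < \infty$) and then deduce equality of radii from it.

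I would first observe that expanding $S = 1 + \Sp + \Sp^2 + \cdots$ gives $[t^n] S \ge [t^n] \Sp$ for $n \ge 1$, so the radius of $\Sp$ is at least $\xc$. Consequently both series converge on $[0, \xc)$, and since $S(t) \ge s_0 = 1$, the inversion $\Sp(t) = 1 - 1/S(t)$ yields $\Sp(t) \in [0, 1)$. As $\Sp$ is non-decreasing on $[0,\xc)$, its limit $\ell := \Sp(\xc^-)$ exists in $[0, 1]$, with $\ell = 1$ if and only if $S(\xc) = +\infty$.

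To rule out $\ell = 1$, I argue by contradiction. Corollary~\ref{cor:QS} gives, for $t \in [0,\xc)$,
\[
Q\!\left(-\Sp(t),\, \frac{t}{(1+\Sp(t))^2}\right) = \frac{1+\Sp(t)}{1-\Sp(t)},
\]
whose right-hand side would tend to $+\infty$ as $t \to \xc^-$. Under the $(a+1)$-positivity hypothesis of Theorem~\ref{thm:radius}, $Q(a,u)$ is non-negative and non-decreasing in $a$ on $[-1, +\infty)$, so setting $u(t) := t/(1+\Sp(t))^2$ one obtains $0 \le Q(-\Sp(t), u(t)) \le Q(0, u(t))$. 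Now $u(t) \to \xc/4$ as $t \to \xc^-$. Since every 312-avoiding permutation is achievable by two parallel stacks (by using only one of them), $s_n \ge C_n$, whence $\xc \le 1/4$ and $\xc/4 \le 1/16 < 1/(2+\sqrt{2})^2 \le \rho_Q(0)$, the last inequality from Proposition~\ref{prop:radiusQ}. Thus $Q(0,\cdot)$ is analytic at $\xc/4$, so $Q(0, u(t))$ stays bounded as $t \to \xc^-$, contradicting the divergence of the right-hand side. This proves $\ell < 1$, so $\Sp(\xc) < 1$ and $S(\xc) < \infty$.

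For the equality of radii, note that if the radius of $\Sp$ were strictly larger than $\xc$, then $\Sp$ would be analytic in a neighbourhood of $\xc$ with value $< 1$, forcing $S = 1/(1-\Sp)$ to extend analytically past $\xc$, contradicting that $\xc$ is its radius. The main obstacle is the bound on the left-hand side of the functional equation as $-\Sp(t) \to -1$: the $(a+1)$-positivity hypothesis is precisely what allows one to reduce the two-variable estimate to the one-variable radius $\rho_Q(0)$, and the strict inequality $1/16 < 1/(2+\sqrt{2})^2$ provides just enough slack to close the argument using only the crude Catalan bound $\xc \le 1/4$.
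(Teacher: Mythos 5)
Your opening reductions (radius of $\Sp$ at least $\xc$, monotone limit $\ell\le 1$) are fine, but the contradiction step has a genuine gap: you read Corollary~\ref{cor:QS} as a \emph{numerical} identity valid for all $t\in[0,\xc)$, whereas it is only an identity of formal power series in $t$. To equate the number $Q\bigl(-\Sp(t),\,t(1+\Sp(t))^{-2}\bigr)$ with $(1+\Sp(t))/(1-\Sp(t))$ you need either (i) that the point $\bigl(-\Sp(t'),\,t'(1+\Sp(t'))^{-2}\bigr)$ lies in the domain of convergence of $Q$ for \emph{every} $t'\in[0,t]$, so that the identity, valid near $t=0$, propagates by analytic continuation along the segment, or (ii) an absolute-convergence rearrangement at the given $t$; the latter fails because after expanding in $t$ the terms have mixed signs, and the natural domination requires roughly $tS(t)^2<\rho_Q(\Sp(t))$, which is exactly what blows up as $t\to\xc^-$ under your hypothesis $\Sp(t)\to 1$. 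Option (i) amounts to the strict form of inequality~\eqref{bound} on $[0,\xc)$, i.e.\ the main content of Theorem~\ref{thm:radius}, which the paper obtains only after Lemmas~\ref{lem:insideC} and~\ref{lem:tight} and a delicate continuation argument through $C(b,v)$ --- and whose proof in turn \emph{uses} the present lemma ($\Sp(\xc)<1$ enters the implicit-function step of Lemma~\ref{lem:tight}), so importing that machinery here would be circular. Your estimate $0\le Q(-\Sp(t),u(t))\le Q(0,u(t))<\infty$ near $\xc$ (via $s_n\ge C_n$, $\xc\le 1/4$ and Proposition~\ref{prop:radiusQ}) is correct, but it bounds the left-hand side only once the numerical identity is already known, which is precisely the missing point.

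Note also that the lemma is stated, and proved in the paper, unconditionally, while your argument assumes the $(a+1)$-positivity of Conjecture~\ref{conj:a+1}; even if completed it would prove a weaker, conditional statement. The paper's proof is purely combinatorial and much shorter: primitive achievable permutations are achievable, so $\coSp_n\le s_n$, and wrapping a canonical operation sequence $w$ as $\EE w\WW$ yields a primitive canonical operation sequence, so $s_n\le \coSp_{n+1}$. These two inequalities give equality of the radii immediately; then $S=1/(1-\Sp)$ forces $\Sp(\xc)\le 1$, so $\Sp$ converges at $\xc$, the inequality $s_n\le\coSp_{n+1}$ then shows that $S$ converges at $\xc$, and $S=1/(1-\Sp)$ finally gives $\Sp(\xc)<1$. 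You should look for an argument of this combinatorial kind rather than one that passes through the analytic functional equation.
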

\begin{proof}
  Let $s_n$ (resp.~$\coSp_n$) denote the coefficient of $t^n$ in $S(t)$
  (resp.~$\Sp(t)$). Then 
$
\coSp_n \le s_n
$, since $\coSp_n$ counts primitive achievable permutations (of size $n$), while $s_n$ counts
  all achievable \ps. Recall that
  $s_n$ also counts  canonical operation
  sequences.  If $w$ is a canonical operation
  sequence (seen as a word on $\{\NN,\SS,\EE,\WW\}$), then $\EE w\WW$ is a
  primitive canonical operation  sequence. This shows that $s_n \le \coSp_{n+1}$.

It follows from these inequalities that $S$ and $\Sp$ have the same
radius of convergence $\xc$. The identity $S=1/(1-\Sp)$ then gives
$\Sp(\xc) \le 1$ (otherwise the radius of $S$ would be smaller than
that of $\Sp$). In particular, the series $\Sp(t)$ is convergent at
$t=\xc$. The inequality $s_n \le \coSp_{n+1}$ then implies that also
$S(t)$ is convergent at $t=\xc$. This implies in turn that $\Sp(\xc)<1$.
%
\end{proof}

Our next lemma exploits the connection between the series $C(b,v)$ and
$S(t)$ established in Theorem~\ref{thm:eqs}, which can be written as:
\begin{equation}
\label{FE:SCbis}
S(t) = 1 + C \left( \Sp, tS^2 \right).
\end{equation}

\begin{Lemma}\label{lem:insideC}
    Let $\xc$ be the radius of convergence of $S=1/(1-\Sp)$, and
    $\rho_C(b)$ the radius of convergence of $C(b, \cdot)$. Then 
    $S(t)$ increases   on the interval $[0, \xc)$,  and on this interval,
$$
tS(t)^2 < \rho_C(\Sp(t)).
$$
\end{Lemma}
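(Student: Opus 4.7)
The plan is to prove the inequality by contradiction, using the functional equation~\eqref{FE:SCbis} together with simple monotonicity of both sides. Throughout, write $h(t) := t S(t)^2$ and $g(t) := \rho_C(\Sp(t))$.

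First I would establish the (essentially trivial) monotonicity facts. The series $S(t) = \sum_{n \ge 0} s_n t^n$ has $s_0 = 1$ and non-negative coefficients, with $s_n > 0$ for all $n$, so $S$ is strictly increasing on $[0,\xc)$; hence so are $\Sp(t) = 1 - 1/S(t)$ and $h(t) = tS(t)^2$. This already delivers the first assertion of the lemma. On the other side, the coefficient $c_n(b) := [v^n] C(b,v)$ is a polynomial in $b$ with non-negative coefficients (it counts standard connected arch systems on $n$ arches by the number of left-right pairs), so coefficient-wise $c_n(b) \le c_n(b')$ whenever $0 \le b \le b'$. Passing to radii of convergence, this forces $\rho_C(b') \le \rho_C(b)$, i.e.\ $\rho_C$ is non-increasing on $[0,\infty)$. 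Combined with the non-decrease of $\Sp$, this makes $g(t) = \rho_C(\Sp(t))$ non-increasing on $[0,\xc)$.

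Next I would argue that $\rho_C(b) > 0$ for every $b \ge 0$, so that $g$ is finite and positive on $[0,\xc)$. The key point is that $c_n(b)$ has degree at most $n$ in $b$, because each left-right pair is anchored at one of the $n$ input operations of the sequence; together with the crude exponential bound on the total number of connected arch systems of half-length $n$ (dominated by $C_n C_{n+1}$), this gives $c_n(b) \le K^n \max(1,b)^n$ for some constant $K$, hence $\rho_C(b) > 0$. In particular $h(0) = 0 < \rho_C(0) = g(0)$.

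The core of the proof is a one-line contradiction. Suppose the strict inequality fails, so that $h(t_1) \ge g(t_1)$ for some $t_1 \in [0,\xc)$; necessarily $t_1 > 0$. Then for every $t \in (t_1, \xc)$, strict monotonicity of $h$ and non-increase of $g$ give
$$
h(t) \;>\; h(t_1) \;\ge\; g(t_1) \;\ge\; g(t).
$$
Since the series $C(\Sp(t), v)$ has non-negative coefficients in $v$, evaluating it at a point strictly greater than its radius of convergence yields $C(\Sp(t), h(t)) = +\infty$. But the functional equation~\eqref{FE:SCbis} then gives $S(t) = +\infty$, contradicting $t < \xc$. Hence $h(t) < g(t)$ for all $t \in [0,\xc)$, as required.

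The only step requiring any real attention is the positivity and monotonicity of $\rho_C$; everything else is a direct exploitation of~\eqref{FE:SCbis} and the positivity of coefficients. I expect the mild verification that $\deg_b c_n(b) \le n$ (and the consequent lower bound on $\rho_C$) to be the one place a careful reader would want spelled out.
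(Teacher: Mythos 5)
Your proof is correct and follows essentially the same route as the paper: assume $tS(t)^2 \ge \rho_C(\Sp(t))$ at some $t_1<\xc$, pass to a slightly larger $t$ where the inequality becomes strict (strict increase of $tS(t)^2$, weak decrease of $\rho_C(\Sp(t))$), and derive a contradiction from~\eqref{FE:SCbis} and non-negativity. The one step you leave implicit --- that the formal identity~\eqref{FE:SCbis} transfers to numerical values, so that the divergence $C\left(\Sp(t), tS(t)^2\right)=+\infty$ really forces $S(t)=+\infty$ --- is precisely where the paper spends its effort, writing $s_n=\sum_{k,m}c_{k,m}a_{n,k,m}$ and rearranging the resulting double sum of non-negative terms; with the positivity you already invoke this is a routine (Tonelli-type) justification, so your argument is complete in substance.
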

\begin{proof}
  That $S(t)$ increases is obvious since the series $S$ has
  non-negative coefficients. We now argue \emm ad absurdum,. Assume that
  there exists $t_1<\xc$ such   that $
t_1S(t_1)^2 \ge \rho_C(\Sp(t_1)).
$
Let $t_2\in (t_1, \xc)$.  Since $S(t)$ increases strictly with $t$ while
$\rho_C(\Sp(t))$ decreases weakly,  $t_2S(t_2)^2> \rho_C(\Sp(t_2)).
$  
Let us write $C(b,v)= \sum_{k \ge 0, m\ge 1} c_{k,m} b^k v^m$. The
identity~\eqref{FE:SCbis}  
gives, for $n\ge 1$,
\beq\label{SC-co}
\begin{array}{l} \displaystyle{s_n :=[t^n] S(t)= \sum_{k \ge 0, m\ge 1} c_{k,m} a_{n,k,m}} \quad
\hbox{where} \\
\displaystyle{a_{n,k,m}:=[t^n] \left(
  \Sp(t)^k t^m S(t)^{2m}\right) \ge 0.}
\end{array}
\eeq
Let us now evaluate the series 
$C(b,v)$ at $b=\Sp(t_2)$ and $v=t_2S(t_2)^2$. Since $t_2S(t_2)^2>
\rho_C(\Sp(t_2))$
and $S$ has non-negative coefficients, this series should be
infinite. However,
\begin{eqnarray*}
 C(b,v) \ = \  \sum_{k \ge 0, m\ge 1} c_{k,m} b^ k v^m&=& \sum_{k \ge 0, m\ge 1}
c_{k,m}\Sp(t_2)^k t_2^m S(t_2)^{2m}\\
&= &\sum_{k \ge 0, m\ge 1} c_{k,m}\sum_ {n\ge 1} {t_2^n} a_{n,k,m}
\\
&= &\sum_ {n\ge 1} {t_2^n} \sum_{k \ge 0, m\ge 1} c_{k,m} a_{n,k,m}
\\
&=&\sum_ {n\ge 1} {t_2^n}s_n \hskip 15mm  \hbox{by}~\eqref{SC-co}\\
&=& S(t_2) - 1 <\infty \hskip 12mm  \hbox{since } t_2 < \xc.
\end{eqnarray*}
In the third line, we have used the fact that all terms in the sum are
non-negative, so that the value of the series is unchanged if we perform
any rearrangement of terms. 

We have thus obtained a contradiction, and the lemma is proved.
\end{proof}

The next lemma deals with the series $C(b,v)$ and its radius
$\rho_C(b)$. The proof is given in Appendix~\ref{APP:CSmall}. It is purely
combinatorial and in particular,  does not use the
equations of Section~\ref{sec:enumeration}. 

\begin{Lemma} \label{lem:C<1/2}
Let $b >0$. Then $\rho_C(b) \le 1/4$ and for
   $v \in  [0, \rho_C(b))$, 
$$
v  < C(b,v) <  \frac{1}{2}.
$$
The series $A(b,\cdot)$ and $U(b,\cdot)$ defined
  by~\eqref{UBAT}  have radius of  convergence at least $\rho_C(b)$.\\
 The series $C(0, \cdot)$, $A(0, \cdot)$ and $U(0, \cdot)$ have
 respectively radius $+\infty$, $+\infty$ and $1/2$.
\end{Lemma}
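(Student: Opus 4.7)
I attack the five assertions of the lemma in increasing order of difficulty, keeping the arguments purely combinatorial as indicated. Two assertions are essentially immediate. Since $C(b,v)$ has no constant term and non-negative coefficients, the series $1+2C(b,v)$ has constant term $1$, hence is invertible in $\qs[[b,v]]$; so $A(b,v)=1+(1+2C)(b-1)$ and $U(b,v)=v/(1+2C)^2$ are analytic wherever $C$ is, giving radii at least $\rho_C(b)$. For $v>0$ and $b>0$, the expansion $C(b,v)=v+bv^2+b(b+2)v^3+\cdots$ with non-negative coefficients in $b$ and $v$ immediately yields $C(b,v)\ge v+bv^2>v$.

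For the case $b=0$, I would prove the structural fact: \emph{every connected standard arch system with $n\ge 2$ arches has at least one left-right pair.} Let $k$ be the position of the first closing letter in such a system, closing an arch $A$ of colour $c$. The letter at position $k-1$ is an opening. By the LIFO stack discipline, the arch closed at $k$ is the most recently opened arch of colour $c$; if the opening at $k-1$ had colour $c$, the arch it opens would itself be the one closed at $k$, hence would occupy only the positions $(k-1,k)$ and cross no other arch, contradicting connectedness for $n\ge 2$. So position $k-1$ opens an arch of the opposite colour, and $(k-1,k)$ is a left-right pair. Consequently $C(0,v)=v$, $A(0,v)=-2v$ and $U(0,v)=v/(1+2v)^2$, with radii $+\infty$, $+\infty$ and $1/2$ respectively.

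The combinatorial core of the lemma is the uniform bound $C(b,v)<1/2$ on $[0,\rho_C(b))$. My plan is to exhibit an injection $\Phi$ from ordered pairs of non-empty (not necessarily standard) connected arch systems into single (not necessarily standard) connected arch systems, with generating functions $(2C(b,v))^2$ and $2C(b,v)$ respectively. A natural candidate would canonically splice one connected system inside the other, forcing two of their boundary arches to cross and thereby merging the two components into one. Properly set up, $\Phi$ controls both the arch count and the left-right pair statistic, and yields $(2C(b,v))^2\le 2C(b,v)$ pointwise for $v\in[0,\rho_C(b))$, whence $C\le 1/2$; strict inequality follows because $\Phi$ cannot hit the two single-arch systems. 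For $\rho_C(b)\le 1/4$ with $b\ge 1$, one uses $[v^n]C(b,v)\ge[v^n]C(1,v)$ and the identity $Q(1,u)=1+2C(1,uQ(1,u)^2)$: with $Q_c=Q(1,1/16)=8-64/(3\pi)<2$, this gives $\rho_C(1)=Q_c^2/16<1/4$.

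The main obstacle is twofold. First, setting up the injection $\Phi$ so that it is canonical and simultaneously preserves the three statistics (arch count, left-right pair count, and connectedness of the image) is delicate; the splice position must be recoverable from the image to ensure injectivity. Second, establishing $\rho_C(b)\le 1/4$ for arbitrarily small $b>0$ must contend with the discontinuity of $\rho_C$ at $b=0$: one must exhibit, for each $b>0$, a family of standard connected arch systems of size $n$ with $k(n)$ left-right pairs, whose cardinality is at least $c\cdot 4^n$ and where $k(n)$ grows slowly enough (say $O(\log n)$) that $b^{k(n)}\cdot 4^n$ still has exponential growth rate $4$. Producing such a family of "slowly-zigzagging" building blocks is the other delicate ingredient I would pursue.
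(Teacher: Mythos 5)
The central step of your plan does not work as stated. You propose an injection $\Phi$ from \emph{all} ordered pairs of non-empty bicoloured connected systems (generating function $(2C)^2$) into single connected systems (generating function $2C$), ``controlling both the arch count and the left-right pair statistic''. No such statistic-preserving injection can exist: the four pairs consisting of two single arches have total arch count $2$ and zero left-right pairs, whereas \emph{every} connected two-arch system consists of two crossing arches of opposite colours and therefore has exactly one left-right pair (its $v^2$-coefficient in $2C$ is $2b$, with no $b^0$ term). For the same reason a pointwise version fails: to deduce $(2C)^2\le 2C$ from an injection you need the weight of each pair to be at most the weight of its image, and for $0<b<1$ the pair of two single arches has weight $v^2$ while any image has weight at most $bv^2<v^2$. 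This is exactly the subtlety the paper's proof is built around: it only injects $(\C-v)\times\C$ and $\C\times(\C-v)$ (excluding the single arch from one factor, which is what makes the left-right pair count additive under the splicing), proves the \emph{coefficient-wise} inequality $C-v-bv^2-2C(C-v)\ge 0$, and then divides by $C-v$ to get $1-2C\ge bv^2/(C-v)>0$. Your ``splice and force a crossing'' idea is the right spirit, but without the exclusion of the single arch and the resulting corrected inequality, the argument collapses precisely on the small cases, and the strictness bookkeeping you sketch does not rescue it.

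Two further points. First, your bound $\rho_C(b)\le 1/4$ is only carried out for $b\ge 1$ (via $C(b,v)\ge C(1,v)$ and the functional equation), and you explicitly leave the case $0<b<1$ open, hoping for families with $O(\log n)$ left-right pairs; this is a genuine missing ingredient. The paper disposes of all $b>0$ at once by noting that staircase polygons are self-avoiding quarter-plane loops, hence connected, and each non-degenerate one has \emph{exactly one} left-right pair, so $C(b,v)$ dominates $b\bigl(SP(v)-v\bigr)$ term by term with $SP$ of radius $1/4$ --- no delicate family is needed. Second, your claim that the radius statement for $U(b,\cdot)=v/(1+2C)^2$ is immediate from formal invertibility of $1+2C$ is not correct: analyticity of $U$ on the disc $|v|<\rho_C(b)$ requires $1+2C(b,v)\neq 0$ there, which is exactly where the bound $|C(b,v)|\le C(b,|v|)<1/2$ is needed (and is how the paper argues); as written, your ordering of the steps makes this circular, though it is reparable once the $C<1/2$ bound is properly established. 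Your $b=0$ analysis (every connected system with at least two arches has a left-right pair, hence $C(0,v)=v$) and the lower bound $C>v$ are fine.
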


\begin{Corollary}\label{cor:Sp13}
  For $t\in [0, \xc]$ one has
$$
S(t) \le \frac 3 2 \quad \hbox{and} \quad \Sp(t) \le \frac 1 3.
$$
\end{Corollary}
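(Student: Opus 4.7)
The plan is to combine the three ingredients already in hand: the functional equation \eqref{FE:SCbis}, the strict inequality $tS(t)^2<\rho_C(\Sp(t))$ on $[0,t_c)$ from Lemma~\ref{lem:insideC}, and the bound $C(b,v)<1/2$ from Lemma~\ref{lem:C<1/2}.

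First, for $t=0$ the result is trivial since $S(0)=1$ and $\Sp(0)=0$. For $t\in(0,t_c)$ we observe that $\Sp(t)>0$ (all coefficients of $\Sp$ are nonnegative and $\Sp$ is nonzero), so we may apply Lemma~\ref{lem:C<1/2} with $b=\Sp(t)$. Lemma~\ref{lem:insideC} gives
$$
0\le tS(t)^2<\rho_C(\Sp(t)),
$$
so Lemma~\ref{lem:C<1/2} yields $C(\Sp(t),tS(t)^2)<1/2$. Substituting into \eqref{FE:SCbis} gives $S(t)<3/2$ on $(0,t_c)$.

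To handle the endpoint $t=t_c$, I would invoke Lemma~\ref{lem:S}, which guarantees $S(t_c)<\infty$; since $S$ is a power series with nonnegative coefficients convergent at $t_c$, it is continuous from the left there, and passing to the limit $t\uparrow t_c$ in the strict inequality produces $S(t_c)\le 3/2$. The bound on $\Sp$ then follows from the identity $S=1/(1-\Sp)$: on $[0,t_c]$,
$$
\Sp(t)=1-\frac{1}{S(t)}\le 1-\frac{2}{3}=\frac{1}{3}.
$$

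There is no real obstacle here; the whole argument is a direct assembly of the preceding lemmas. The only delicate point is the passage from the open interval $(0,t_c)$ to the closed interval $[0,t_c]$, which is handled by the convergence statement in Lemma~\ref{lem:S}. Note also that the strict inequality $C(b,v)<1/2$ is only claimed for $b>0$ in Lemma~\ref{lem:C<1/2}, which is precisely why the case $t=0$ must be treated separately (and trivially).
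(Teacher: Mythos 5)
Your argument is correct and is essentially the paper's own proof: treat $t=0$ trivially, use Lemma~\ref{lem:insideC} to place $(\Sp(t),tS(t)^2)$ in the convergence domain of $C$, apply Lemma~\ref{lem:C<1/2} through \eqref{FE:SCbis} to get $S(t)\le 3/2$ on $[0,\xc)$, and extend to $t=\xc$ by continuity (which the paper leaves implicit and you justify via Lemma~\ref{lem:S}), with the bound on $\Sp$ following from $S=1/(1-\Sp)$.
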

\begin{proof}
The identities are obvious if $t=0$, so let us assume $t>0$.
Then $\Sp(t)>0$, and by Lemma~\ref{lem:insideC}, the pair $( \Sp(t), tS(t)^2)$ lies in
  the domain of convergence of $C(b,v)$ for $t\in [0, \xc)$. Hence~\eqref{FE:SCbis}
  holds in this interval, and implies that $S(t)\le 3/2$ by
  Lemma~\ref{lem:C<1/2}.  Since $S=1/(1-\Sp)$, this means that
  $\Sp(t)\le 1/3$ in this interval. These inequalities hold at $t=\xc$
  as well by continuity. 
\end{proof}

\subsection{Relating the singularities of  $S$ and $Q$}
We first establish a weak form of Theorem~\ref{thm:radius}.

\begin{Lemma}\label{lem:tight}
Assume that the series $Q(a,u)$ is $(a+1)$-positive.   There exists
$t_1\in [0, \xc]$ such that  
$$
\frac {t_1}{(1+\Sp(t_1))^2} = \rho_Q(-\Sp(t_1)).
$$
Moreover for any such $t_1$, the function $t(1+\Sp(t))^{-2}$ is
increasing on $[0, t_1]$. 
\end{Lemma}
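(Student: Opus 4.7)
Write $a(t) = -\Sp(t)$ and $g(t) = t/(1+\Sp(t))^2$, so that Corollary~\ref{cor:QS} reads $Q(a(t), g(t)) = (1-a(t))/(1+a(t))$. Define
$$h(t) := \rho_Q(a(t)) - g(t), \qquad t \in [0, \xc].$$
First I would check continuity: by Lemma~\ref{lem:S}, $\Sp$ converges (hence is left-continuous) at $\xc$, with $\Sp(\xc) < 1$; by Corollary~\ref{cor:Sp13}, $\Sp(t) \in [0, 1/3]$, so $a(t) \in [-1/3, 0] \subset (-1, +\infty)$. The $(a+1)$-positivity hypothesis then allows Proposition~\ref{prop:radiusQ} to yield continuity of $\rho_Q$ on $(-1, +\infty)$, so $h$ is continuous on $[0, \xc]$ with $h(0) = \rho_Q(0) > 0$. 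Existence of $t_1$ will follow from the intermediate value theorem once we prove $h(\xc) \leq 0$.

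The core of the argument is to prove $h(\xc) \leq 0$ by contradiction: suppose $g(\xc) < \rho_Q(a(\xc))$. Writing $Q(a, u) = \sum_{m, n \geq 0} q_{m, n}(a+1)^m u^n$ with $q_{m, n} \geq 0$ (by $(a+1)$-positivity) and using continuity of $\rho_Q$, one can pick $a_0 > a(\xc)$ and $u_0 > g(\xc)$ with $Q(a_0, u_0) < \infty$, which forces $Q$ to be jointly holomorphic on a polydisc in $\mathbb{C}^2$ containing $(a(\xc), g(\xc))$. Since in addition $\Sp(\xc) < 1$, the map
$$F(s, t) := Q\!\left(-s, \frac{t}{(1+s)^2}\right) - \frac{1+s}{1-s}$$
is then holomorphic in a complex neighborhood of $(\Sp(\xc), \xc)$ and vanishes there (the identity of Corollary~\ref{cor:QS} extends to $\xc$ by the continuity just established). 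A direct calculation gives
$$\frac{\partial F}{\partial s} = -Q'_1 - \frac{2t}{(1+s)^3}\, Q'_2 - \frac{2}{(1-s)^2},$$
and all three terms are non-positive at $(\Sp(\xc), \xc)$: $Q'_1 \geq 0$ and $Q'_2 \geq 0$ by $(a+1)$-positivity and non-negativity of the $u$-coefficients, while $-2/(1-s)^2 < 0$ strictly. Hence $\partial F/\partial s < 0$, and the implicit function theorem produces a holomorphic extension of $\Sp$ past $\xc$, contradicting Pringsheim's theorem applied to the positive series $\Sp$ of radius $\xc$. Therefore $h(\xc) \leq 0$, and IVT furnishes $t_1 \in (0, \xc]$ with $h(t_1) = 0$.

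For the monotonicity claim, I would take $t_1$ to be the smallest point of equality (the statement for larger $t_1$ restricts to this case). On $[0, t_1)$ we have $g(t) < \rho_Q(a(t))$, so $Q$ and its partials are finite at $(a(t), g(t))$. Differentiating $Q(a, g) = (1-a)/(1+a)$ in $t$ yields
$$Q'_2\, g'(t) = -a'(t)\left[ Q'_1 + \frac{2}{(1+a(t))^2} \right].$$
Since $\Sp$ has non-negative coefficients, $a' \leq 0$; $(a+1)$-positivity gives $Q'_1 \geq 0$; and $Q'_2 > 0$ because the coefficient of $u$ in $Q(a, u)$ equals $2$. Hence $g'(t) \geq 0$ on $[0, t_1)$, and continuity of $g$ pushes monotonicity to $[0, t_1]$.

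\textbf{Main obstacle.} The delicate step is forcing $h(\xc) \leq 0$. The IFT argument needs both the joint analyticity of $Q$ (supplied by $(a+1)$-positivity plus continuity of $\rho_Q$ at $a(\xc)$) and the strict inequality $\partial F / \partial s < 0$. What rescues the latter is the explicit term $-2/(1-s)^2$, which is strictly negative as soon as $\Sp(\xc) < 1$ — a control coming from the earlier combinatorial bound in Corollary~\ref{cor:Sp13}. Without this bound, or without the $(a+1)$-positivity hypothesis, the implicit function argument would not close.
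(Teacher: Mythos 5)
Your toolkit here is the same as the paper's (implicit function theorem at the critical point, the sign of $\partial F/\partial s$ coming from $(a+1)$-positivity together with $\Sp(\xc)\le 1/3<1$, and Pringsheim's theorem), but the way you set up the contradiction leaves a genuine gap. You assume only that $g(\xc)<\rho_Q(-\Sp(\xc))$ and then assert that ``the identity of Corollary~\ref{cor:QS} extends to $\xc$ by continuity''. That identity is a priori an identity of formal power series in $t$; its numerical validity at a given $t$ is obtained by analytic continuation of both sides along the curve $t\mapsto\left(-\Sp(t),\,t(1+\Sp(t))^{-2}\right)$ starting from small $t$, and this requires the curve to stay inside the open domain of convergence of $Q$ for \emph{all} intermediate values of $t$, not just at the endpoint. (You cannot argue termwise either: expanding $Q$ in powers of $(a+1)$ and $u$ and substituting $a=-\Sp(t)$, $u=t(1+\Sp(t))^{-2}$ produces inner series $(1-\Sp(t))^k(1+\Sp(t))^{-2n}$ whose $t$-coefficients have mixed signs, so the positivity/rearrangement device used for $C$ in Lemma~\ref{lem:insideC} is not available here.) Under your hypothesis it is conceivable, at this stage, that the curve leaves the domain at some $t'<\xc$ and re-enters near $\xc$; then neither the vanishing of your $F$ at $(\Sp(\xc),\xc)$ nor the fact that the implicit solution coincides with $\Sp$ on a left neighbourhood of $\xc$ (which you also need before invoking Pringsheim) is justified. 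Note also that your intermediate target, $g(\xc)\ge\rho_Q(-\Sp(\xc))$, is stronger than the lemma and is not what this argument can deliver. The repair is exactly the paper's framing: assume the lemma false, i.e.\ that equality holds at \emph{no} point of $[0,\xc]$; since $h(0)=\rho_Q(0)>0$ and $h$ is continuous, this forces $h>0$ on all of $[0,\xc]$, the curve stays in the domain, \eqref{Qsp} holds up to and including $\xc$, and your implicit-function/Pringsheim computation (which is the paper's) then yields the contradiction.

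On the ``moreover'' part, your reduction is backwards: proving that $t(1+\Sp(t))^{-2}$ increases on $[0,t_1]$ for the \emph{smallest} equality point does not give the statement for a larger equality point, since that asserts monotonicity on a longer interval; so the quantifier ``for any such $t_1$'' is not addressed by your parenthetical. For the smallest $t_1$ (the case where \eqref{Qsp} is cleanly available on $[0,t_1)$, and the only one used later in Theorem~\ref{thm:radius}), your differentiation of $Q(a(t),g(t))=(1-a(t))/(1+a(t))$ is a correct alternative to the paper's argument, which instead observes directly that the right-hand side of \eqref{Qsp} increases while the factors $(1-\Sp(t))^k$ decrease, so the $(a+1)$-positive expansion forces $t(1+\Sp(t))^{-2}$ to increase; the two arguments are essentially equivalent, the paper's avoiding any differentiation of $Q$.
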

\begin{proof}
Recall that $\Sp(t)<1/3  $
  for $t\in [0, \xc]$  (Corollary~\ref{cor:Sp13}), and   assume
  that the first part of the lemma is wrong. By continuity of $\Sp$
  and $\rho_Q$, this 
  means that~\eqref{bound} holds strictly on $[0, \xc]$. Then  for $t\in [0, \xc]$, 
  the pair $(-\Sp(t), t (1+\Sp(t))^{-2})$ lies in the (open) domain of convergence of
  $Q$, and by Corollary~\ref{cor:QS},
\beq\label{Qsp}
Q\left( -\Sp(t), \frac t {(1+\Sp(t))^2}\right)= \frac{1+\Sp(t)}{1-\Sp(t)}.
\eeq
This holds in particular at $t=\xc$. We will now use the implicit
function theorem to define an analytic continuation of $\Sp$ at
$t_c$. Consider the equation 
$$
Q\left( -\Sb(t), \frac t {(1+\Sb(t))^2}\right)= \frac{1+\Sb(t)}{1-\Sb(t)}
$$
as the implicit definition of a function $\Sb(t)$. The implicit
function theorem guarantees the existence of a (unique) analytic
solution $\Sb(t)$ defined in a neighbourhood of $\xc$ and satisfying
$\Sb( \xc)= \Sp(\xc)$, provided
$$
-Q'_1
 - \frac {2\xc}  {(1+\Sp(\xc ))^3} Q'_2
\not = \frac 2 {(1-\Sp(\xc ))^2},
$$
where $Q'_1$ and $Q'_2$ denote the derivatives of $Q$ taken at 
\[
\left(-\Sp(\xc ),  \xc  {(1+\Sp(\xc ))^{-2}}\right). 
\]
But the
$(a+1)$-positivity of $\Q$,  
together with the fact that $\Sp(\xc)<1$, implies that the left-hand
side is negative, while the right-hand side is positive. So the
implicit function theorem applies. By~\eqref{Qsp}, the function $\Sb$
must coincide with $\Sp$ on an interval of the form $(\xc-\vareps,
\xc)$, for some $\eps>0$. It thus constitutes an analytic continuation of $\Sp$ at $\xc$, 
which is impossible by Pringsheim's theorem (see~\cite[Thm.~IV.6,
p.~240]{flajolet-sedgewick}).  We have thus reached a contradiction,
which proves 
 the first part of the lemma.

\medskip
Now~\eqref{Qsp} holds for $t\in [0, t_1]$ by analytic continuation
and continuity at $t_1$. The
right-hand side increases with $t$, and thus the left-hand side must
also increase. However, due to the $(a+1)$-positivity of $Q(a,u)$, it
reads
$$
\sum_{k,n\ge 0} q_{k,n} (1-\Sp(t))^k
\left(\frac{t}{(1+\Sp(t))}\right)^n,
$$
with $q_{n,k}\ge 0$, and if ${t}{(1+\Sp(t))^{-2}}$ would decrease,
even locally (or weakly), so would the whole left-hand side (because
$(1-\Sp(t))$  decreases). Hence ${t}{(1+\Sp(t))^{-2}}$ increases on
$[0, t_1]$.
 \end{proof}

\begin{figure}[ht]
\begin{center}
\input{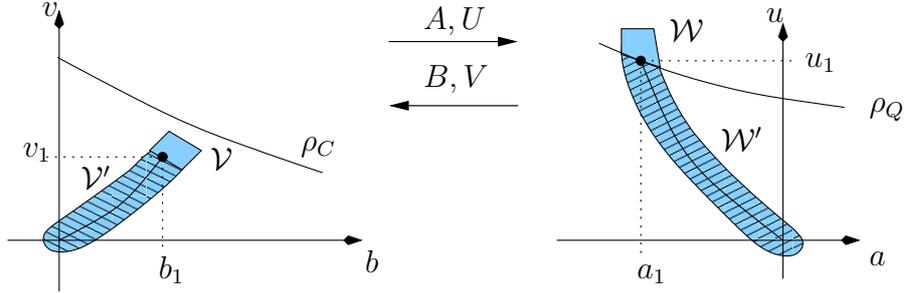}
\caption{Illustration for the proof of Theorem~\ref{thm:radius}. Left:
the $(b,v)$-plane of the series $C(b,v)$. Right: the $(a,u)$-plane of
the series $Q(a,u)$.} 
\label{fig:CQplanes}
\end{center}

\end{figure}
We are now ready for the
 \begin{proof}[Proof of Theorem~\ref{thm:radius}.]
The bound~\eqref{bound} holds strictly at $t=0$, but by
Lemma~\ref{lem:tight}, it cannot be strict on $[0, 
t_c]$. Let $t_1$ be the smallest value of $[0, t_c]$ where the
equality holds. We have to prove that $t_1=t_c$. We argue \emm ad
absurdum,.   The argument is illustrated by 
Figure~\ref{fig:CQplanes}.    Let us denote 
\beq\label{a1}
\begin{array}{lllllllll}
a_1&=& -\Sp(t_1), & \hskip 10mm u_1&=& \displaystyle \frac{t_1}{(1+\Sp(t_1))^2},\\
b_1&=&\Sp(t_1), &\hskip 10mm  v_1&=& \displaystyle  \frac{t_1}{(1-\Sp(t_1))^2}&=& t_1 S(t_1)^2.
\end{array}
\eeq
As in the proof of the previous lemma, our objective is to obtain a
contradiction by constructing an analytic continuation of the map
$u\mapsto Q(a_1,u)$ at $u=u_1$. However, it will take a bit of work
before we can establish our starting point, namely  that~\eqref{Qa1}
holds on an interval $[u_1-\vareps, u_1]$.

By Lemma~\ref{lem:insideC}, the closed curve $\overline{\cC}_C:=\{(\Sp(t), t
    S(t)^2),t\in [0, t_1]\}$ lies in the region
    $\cD_C=\{(b,v)     \in \cs^2, |v| <
    \rho_C(|b|)\}$.
 We adopt the convention
\beq\label{convention}
\rho_C(0)=\lim_{b\rightarrow 0^+} \rho_C(b),
\eeq
which makes  $\cD_C$ open and connected (recall that $C(0,v)=v$, so
that the radius of $C(0, \cdot)$ is 
infinite; 
  Lemma~\ref{lem:C<1/2} implies that the above value of
$\rho_C(0)$ is less than $1/4$.
Note that $C$ is analytic in $\cD_C$, as well as $A$ and $U$ 
    (by Lemma~\ref{lem:C<1/2}). Hence~\eqref{FE:SCbis} holds for $t\in
    [0, t_1]$, and the definition of $A$ and $U$ in terms
    of $C$ (i.e.~equation~\eqref{UBAT}) thus gives
\beq\label{Acurve}
A(\Sp, tS^2)=-\Sp \quad \hbox{and}\quad U(\Sp, tS^2) = \frac
t{(1+\Sp)^2}.
\eeq
    Let  $\cV $ be an open neighbourhood of $\overline{\cC}_C$ 
    contained in $\cD_C$. By the open mapping 
    theorem in two variables~\cite[Thm.~6.3]{kaup}, 
the image  by $(A,U)$ of $\cV $ is
    a  neighbourhood $\cW $ (in $\cs^2$) of
    $\overline{\cC}_Q:= (A,U)\left(\overline{\cC}_C \right) =
    \{(-\Sp(t), t     (1+\Sp(t))^{-2}), 
    t\in [0, t_1]\}$ (see Figure~\ref{fig:CQplanes}). 
  Let  $\cD_Q=\{(a,u)    \in \cs^2, |u| <
    \rho_{\Q}(|a+1|-1)\}$ (with the same convention as in~\eqref{convention} for
 defining   $\rho_Q(-1)$). Then by continuity of $\rho_Q$ (Proposition~\ref{prop:radiusQ}), 
$\cD_Q$ is open and connected, and $Q$ is analytic in  $\cD_Q$. By
definition of $t_1$, the domain $\cD_Q$    contains  ${\cC_Q}:=\{(-\Sp(t), t
    (1+\Sp(t))^{-2}), 
    t\in [0, t_1)\}$.  Let $\cW '$
    be the (necessarily open) connected component of  $ \cW \cap
    \cD_Q$ containing ${\cC_Q}$, and let
$$
\cV '= \{(b,v) \in \cV  : (A(b,v), U(b,v)) \in \cW '\}.
$$
Then
 $\cV '$ is a connected open neighbourhood of ${\cC_C}:=\{(\Sp(t), t
    S(t)^2),t\in [0, t_1)\}$. By analytic continuation
    of~\eqref{CfromQ},     we have,     for $(b,v) \in \cV '$,
\beq\label{6bis}
Q(A(b,v), U(b,v))= 1+ 2C(b,v),
\eeq
and in particular $Q(A(b,v), U(b,v))\not = 0$ since $|C(b,v)|< 1/2$
in $\cD_C$ (see Lemma~\ref{lem:C<1/2}). Hence $1/Q(A,U)$ has no pole
in $\cV '$, the series $B(A,U)$ is analytic in $\cV'$, and by analytic
continuation of~\eqref{BAT},  
    we have,     for $(b,v) \in \cV '$,
\beq\label{BAVA}
B( A(b,v),U(b,v))=b \quad \hbox{and} \quad V(A(b,v),U(b,v))=v.
\eeq
Let $(a,u)\in \cW '$.  By definition of $\cW$ and $\cV '$, there
exists $(b,v) \in \cV '$ such that 
$A(b,v)=a$ and $U(b,v)=u$. The above identities show that $b$ and $v$
are unique, and given by
$$
b=B(a,u) \quad  \hbox{and } \quad v=V(a,u).
$$
In particular, the identity~\eqref{6bis} reads, for $(a,u)\in \cW '$,
$$
Q(a,u)=1+ 2C \left( 1 - \frac{1-a}{Q(a, u)}, u\Q(a, u)^2 \right).
$$
Recall that $\cW '$ is the connected component of $\cW  \cap \cD_Q$ containing
$\cC_Q$, and that $\cW $ contains a ball centered at   the point $(a_1, u_1)$. This implies that $\cW '$
contains  a segment $\{(a_1, u): u \in [u_1 -\eps,
u_1)\}$ with $\eps>0$. Hence the identity
\beq\label{Qa1}
Q(a_1,u)=1+ 2C \left( 1 - \frac{1-a_1}{Q(a_1, u)}, u\Q(a_1, u)^2 \right)
\eeq
holds in this segment, and by continuity at $u_1$ as well. Taking the
limit $(b,v)\rightarrow (b_1, v_1)$ in~\eqref{BAVA} shows,
in combination with~\eqref{a1} and~\eqref{Acurve}, that 
$$
 1 - \frac{1-a_1}{Q(a_1, u_1)}=b_1, \quad \hbox{and} \quad  u_1\Q(a_1,
 u_1)^2=v_1.
$$
Recall that $C$ is analytic in the neighborhood of $(b_1, v_1)$. We can now
mimic the implicit function argument used in the proof of
Lemma~\ref{lem:tight}. 
 Consider the equation 
\beq\label{Qc-def}
\Qc(u)=1+ 2C \left( 1 - \frac{1-a_1}{\Qc( u)}, u\Qc( u)^2 \right)
\eeq
as the implicit definition of a function $u\mapsto \Qc(u)$. The implicit
function theorem guarantees the existence of a (unique) analytic
solution  defined in a neighbourhood of $u_1$ and satisfying
$\Qc(u_1)= Q(a_1,u_1)$, provided that
\beq\label{not1}
1\not = 2\frac{1-a_1}{\Q(a_1, u_1)^2}C'_1+ 4u_1 \Q(a_1,u_1) C'_2,
\eeq
where the derivatives $C'_1$ and $C'_2$ are taken at the point $(b_1,v_1)$.
By differentiating~\eqref{Qa1} with respect to $u$, we obtain, for
$u\in [u_1-\eps, u_1)$,
$$
Q'_2(a_1,u)= Q'_2(a_1,u)\left( 2\frac{1-a_1}{\Q(a_1, u)^2}C'_1+ 4u
  \Q(a_1,u) C'_2\right) +2 \Q(a_1, u)^2C'_2,
$$
where the derivatives are evaluated at $\left(1 - \frac{1-a_1}{\Q(a_1,
    u)}, u\Q( a_1,u)^2\right)$. Recall that by definition of $t_1$,
the point $(a_1, u_1)$ lies on the critical curve of $Q$. Since we
have assumed that $\Q_2'(a,u)$ is finite on this curve,
There exists in a neighbourhood of $u_1$ a
(unique) analytic function $\Qc(u)$ satisfying~\eqref{Qc-def} and
$\Qc(u_1)= Q(a_1, u_1)$. By~\eqref{Qa1}, it coincides with $Q(a_1,u)$
on the segment $[u_1-\vareps, u_1)$, and thus constitutes an analytic
continuation of $u\mapsto Q(a_1, u)$ at $u_1=\rho_Q(a_1)$. This
contradicts Pringsheim's theorem, and we have thus proved that $t_1=t_c$.

It now follows from Lemma~\ref{lem:tight} that  $t(1+\Sp(t))^{-2}$ is
increasing on $[0,\xc]$. Finally, the bound on $\Sp(\xc)$ comes from Corollary~\ref{cor:Sp13}.
 \end{proof}

\section{Some questions and observations}
\label{sec:final}
The work we have presented
opens up some obvious related or more general questions.

\subsection{Some questions raised directly by our work}

Of course,  the conjectures of
Section~\ref{sec:conj} remain open. For the readers who would be
interested in exploring them, we discuss some possible improvements of
these conjectures in Section~\ref{subsec:more a+1}. 

Also, we do not know anything about the nature of the
series $S(t)$: is it D-finite, is it differentially algebraic? Nor do
we know  the nature of $Q(a,u)$ for a generic value 
of $a$, nor even for $a=0$.
 The right plot of Figure~\ref{fig:radius}
suggests that the  exponent in the asymptotic behaviour of $q_n(a,u)$
varies continuously  
with $a$ (and is not constant), which would rule out D-finiteness for
a generic value of $a$. 
For comparison, for unconfined loops we
predict from the differential equation that the exponent is $-1$ for
$a\ge -1$, except at $a=-1/2$ where it is  $-3/4$ (this could almost
certainly be made into a rigorous proof). For loops confined to the
upper half plane, we find an exponent $-2$ except at $a=-1/2$ where it
should be $-7/4$.

Finally, it would be interesting to obtain an  asymptotic estimate of
$s_n$, not only its exponential growth constant. We have submitted the
first 70 values of $s_n$ to Tony Guttmann who predicts, using 
differential approximants, that $s_n \sim \kappa\, \xc^{-n} n^{-\gamma}$,
for some positive constant $\kappa$, where 
$\gamma\simeq -2.48$ might be $-5/2$ if we expect it to be rational,
and $t_c\simeq 0.12075$, so $1/t_c \simeq 8.28$.

\subsection{Other rearranging devices}
All of the questions we have asked about stacks in parallel can equally well be asked about other
devices whose only purpose is to permute data. Specifically we could consider double ended queues (deques)
and multiple stacks in parallel.

The action of a deque was considered
by Knuth~\cite[Sec.~2.2.1]{Knuth68}. 
A deque behaves very much like two stacks 
in parallel, treating the inputs at either end as  
corresponding  to inputs to two stacks. The difference is that the
bottoms of the stacks are effectively connected meaning that  
an element can be input at one end and output from the other. The arch system diagrams extend naturally to this context 
viewing the whole picture as a cylinder (by connecting the upper and lower edges), so we are also allowed ``arches'' 
that loop around --- starting above the line and finishing below it (or vice versa). Of course the non-crossing criterion must 
still be satisfied. In this case there are further sources of
non-uniqueness and one needs to develop a new notion of canonical sequences.

Likewise, one could consider a system of $m$ stacks in parallel
for any $m \geq 2$. Operation sequences now correspond to loops in
$\ns^m$, and the arches of our arch systems are now coloured with $m$
colours instead of 2. Section~\ref{sec:canonical} extends without any
difficulty,  provided we define for each connected arch system a \emm standard,
colouring. The main difficulty comes later, when one relates loops in
$\ns^m$ to connected arch systems: one has to determine in how many
ways a standard connected arch system can be re-coloured (when $m=2$,
this is the factor 2 in Eq.~\eqref{FE:QC}), and this question requires
further investigation.

\subsection{More on $(a+1)$-positivity of loops}
\label{subsec:more a+1}
In our attempts for proving Conjecture~\ref{conj:a+1} (the generating function of quarter
plane loops is $(a+1)$-positive), we have tried to see if stronger
properties hold. We believe that the following observations  may be useful for the
readers who would be interested in exploring this conjecture.

\subsubsection{Some properties that may hold}
We begin with a strong property dealing with the values found at
$a=-1$. If true, it would give a new proof
of~\eqref{expr-Qm1},~\eqref{expr-Rm1} and~\eqref{expr-Hm1}.  Below, we
call a \emm bilateral Dyck path, any one-dimensional walk starting and
ending at $0$.
\begin{enumerate}
\item [{$(P_1)$}] Let $w$ (resp.~$v$)  be a  bilateral Dyck path of half-length $i$ (resp.~$j$)  
on the alphabet $\{\EE,\WW\}$ (resp.~$\{\NN,\SS\}$). 
Then the polynomial that counts walks of the shuffle
  class of $vw$ according to the number of \NW\ and \ES\  corners takes
  the value ${i+j \choose i}$ at $a=-1$. 
\end{enumerate}
By Lemma~\ref{cor:equidistributed}, replacing the pair (\NW, \ES) by
(\NW, \SE), (\WN, \ES) or
(\WN, \SE) does not change the validity of the statement.
This property, observed by Julien Courtiel, has been checked for $i,j
\le 5$ for Dyck paths, and for $i,j\le 4$ for bilateral Dyck paths.

Our second property deals with $(a+1)$-positivity. We have proved in
this paper that for any bilateral Dyck path $v$  on the alphabet
  $\{\NN,\SS\}$,  the 
  generating function of loops that project vertically on $v$, counted by the length and the number
  of \NW\ and \ES\  corners, is $(a+1)$-positive. In
  fact, this series only depends on the length of $v$
  (see Propositions~\ref{prop:ABC} and~\ref{prop:a+1-R}). A similar
  statement might be true for quarter plane loops.
\begin{enumerate}
\item [{$(P_2)$}]  Let  $v$  be a  Dyck path of half-length $j$  on the alphabet
  $\{\NN,\SS\}$. Then the 
  generating function of quarter plane loops that project vertically on $v$
is $(a+1)$-positive (but does not depend on $j$ only).
\end{enumerate}
By Lemma~\ref{cor:equidistributed}, replacing the pair
(\NW, \ES) by (\NW, \SE), (\WN, \ES) or
(\WN, \SE) does not change the validity of the statement.
This property has been checked for $j\le 5$ and loops of half length
at most 10.

\subsubsection{Some properties that do not hold}
Our first observation is that $(a+1)$-positivity really appears as a
property of loops.
\begin{enumerate}
\item  [{$(N_1)$}] There is no $(a+1)$-positivity property for walks ending
  at a prescribed endpoint $(i,j)$, whether confined to the quarter plane, to
  the upper half plane or not confined at all.
\end{enumerate}
\textbf{Examples}. For unconfined walks of length 3 ending at $(-1,2)$,
we obtain the polynomial $2a+1$. Since these walks are confined to the
upper half plane, this also provides an example in this case. Finally,
for quarter plane walks of length 7 ending at $(5,0)$, we obtain the
polynomial $15a+12$. 

\begin{enumerate}
\item  [{$(N_2)$}] There is no  $(a+1)$-positivity property inside a shuffle
  class, even in the quarter plane. 
\end{enumerate}
\textbf{Example}. For the shuffle class of ({\sf EWEWEW},{\sf
  NNNSSS}), we find the polynomial $62a^3+292a^2+390a+180$, which is not $(a+1)$-positive.

However, the value at $a=-1$ is conjectured to be very simple (and
positive), see Property $(P_1)$ above.

We finally examine a natural extension of $(P_2)$ to bilateral Dyck paths.
\begin{enumerate}
\item  [{$(N_3)$}] There is no  $(a+1)$-positivity property for loops of the
  half plane  $\{(x,y): x\ge 0\}$ that project on a fixed bilateral
  Dyck path $v$.
\end{enumerate}

\noindent 
\textbf{Example}. For $v={\sf SSNN}$, the series reads
$$
u^4+ (4a^2+6a+5)u^6+O(u^8),
$$
and the second coefficient is not $(a+1)$-positive.

\bigskip
\noindent
{\bf{Acknowledgements.}} We are  indebted to  Cyril Banderier, Olivier
Bernardi, Alin Bostan, Julien
Courtiel, Tony Guttmann, Pierre
Lairez, Kilian Raschel  for helpful and interesting discussions. 
MA thanks LaBRI for its hospitality  during visits in 2008 and 2012. 


\appendix

%

\section{The series $\boldsymbol{Q(-1,s,u)}$}
\label{app:m1}

We now  prove the second part of Proposition~\ref{prop:Qm1}, dealing
with the case $a=-1$. We start from the
functional equation~\eqref{eq:Qa-ref}.
As a warm up, let us give another
proof of the case $a=1$, based on that equation.
 Our approach  is taken from~\cite{mbm-mishna}. When
 $a=1$, Equation~\eqref{eq:Qa-ref} reads: 
\beq\label{eqQ1}
K(x,y)xy\Qa(x,y)= xy-ux \Qa(x,0)-usy \Qa(0,y),
\eeq
with $K(x,y)\equiv K(s,u;x,y)= 1-u(sx+s\bx +y+\by)$. Observe that the
variables $x$ and $y$ are decoupled in the unknown series occurring in
right-hand side. Moreover, $K(x,y)$
is left unchanged by the two following involutions:
$$
(x,y)\mapsto (\bx, y) \quad \hbox{and} \quad (x,y)\mapsto (x, \by).
$$
Each involution fixes one coordinate of the pair $(x,y)$: this will play an
important role in the solution.
Together, these involutions generate a group of order 4, and the orbit of $(x,y)$
is $\{(x,y),  (\bx, y), (\bx, \by), (x, \by)\}$. Let us form the
alternating sum of~\eqref{eqQ1} over this orbit. Because of the
$x/y$-decoupling, all unknown 
series on the right-hand side disappear, leaving
\begin{align*}
K(x,y) \left( xy\Qa(x,y)- \bx y\Qa(\bx,y)+  \bx \by\Qa(\bx,\by)- x\by\Qa(x,\by)\right)
& = \\
xy-\bx y +\bx\by -x \by 
= (x-\bx)(y-\by).
\end{align*}
Equivalently,
$$
xy\Qa(x,y)- \bx y\Qa(\bx,y)+  \bx \by\Qa(\bx,\by)- x\by\Qa(x,\by)
= \frac{(x-\bx)(y-\by)}{ 1-u(sx+s\bx +y+\by)}.
$$
To conclude, we observe that, on the left-hand side, the series
$xy\Qa(x,y)$ consists of monomials in which the exponents of $x$ and
$y$ are positive.  In the other three series, either the exponent of
$x$, or the exponent of $y$ (or both) is negative. This tells us that
$xy\Qa(x,y)$ is the \emm positive part in $x$ and $y$, of the rational
series occurring on the right-hand side. In particular, extracting the
coefficient of $x^1y^1$ in the above equation gives
\begin{eqnarray*}
  \Qa(0,0)\equiv\Qa(1,s,u;0,0)&=& [xy] \frac{(x-\bx)(y-\by)}{
    1-u(sx+s\bx +y+\by)}
\\
&=& \sum_{n\ge 0} u^n [xy]\big(  (x-\bx)(y-\by)(sx+s\bx
  +y+\by)^n\big),
\end{eqnarray*}
which yields
$$
 \Qa(1,s,u;0,0)= \sum_{n\ge 0} u^{2n} \sum_{i=0}^n s^{2i} {2n\choose
   2i} C_i C_{n-i}
$$
after an elementary calculation. This is equivalent to the
expression~\eqref{Q1-ref} of $Q(1,s,u)=\Qa(1, \sqrt s, \sqrt u;0,0)$.

Let us now move to the solution of~\eqref{eq:Qa-ref} in the
case $a=-1$. The equation reads 
\beq\label{eq:xy}
K(x,y)  xy\Qa(x,y)=
xy-ux (1-2usx)\Qa(x,0)-suy (1-2uy)\Qa(0,y) ,
\eeq
where now
\beq\label{Km1}
K(x,y)= 1-u(sx+s\bx+y+\by)+2u^2s(x\by+y\bx).
\eeq
The involutions that leave $K(x,y)$ unchanged and fix an element of
the pair $(x,y)$ are now
$$
(x,y) \mapsto \left(\bx\, \frac{1-2uy}{1-2u\by},y\right)
\quad \hbox{and} \quad 
(x,y) \mapsto \left(x, \by\, \frac{1-2usx}{1-2us\bx}\right).
$$
However, they generate an infinite group, which prevents us from
applying the above strategy. But a finite group is still hiding in
this equation. Let us  introduce new variables $X$ and $Y$, with
$$
x=2us+X \quad \hbox{and} \quad y= 2u+Y.
$$
The functional equation~\eqref{eq:xy} now reads
\begin{multline}
  \label{eqQm1}
\tilde K(X,Y)  XY\Qta(X,Y)= \\ (2us+X)(2u+Y)
-u(2us+X) (1-2us(2us+X))\Qa(2us+X,0)\\
-us(2u+Y) (1-2u(2u+Y))\Qa(0,2u+Y) ,
\end{multline}
with $\Qta(X,Y)=\Qa(2us+X,2u+Y)$ and
$$
\tilde K(X,Y)= 
\frac{xyK(x,y)}{XY}=1- 4u^2(1+s^2) -suX-uY - su \alpha  \bX - u\beta \bY,
$$
where $\bX=1/X$, $\bY=1/Y$, and $\alpha=(4u^2s^2-1)$ and $\beta=(4u^2-1)$ are independent of $X$
and $Y$. This Laurent polynomial is now invariant by the (simpler) involutions
$$
(X,Y) \mapsto ( \alpha \bX , Y)
\quad \hbox{and} \quad 
(X,Y) \mapsto (X, \beta \bY).
$$
 These involutions generate again a group of order four\footnote{In
   terms of the original variables $x$ and $y$, these involutions are
\[
\Phi: (x,y) \mapsto \left( \frac{2su-\bx}{1-2su\bx},y\right) \: \mbox{and} \:
   \Psi: (x,y)
   \mapsto \left( x,\frac{2u-\by}{1-2u\by}\right).
 \] 
 They do not leave
   $K(x,y)$ invariant, but transform it simply as follows:
$$
K(\Phi(x,y))= \frac {1-4s^2u^2}{(1-2sux)(1-2su\bx)}\, K(x,y)
\: \hbox{and} \: K(\Psi(x,y))=  \frac {1-4u^2}{(1-2uy)(1-2u\by)}\, K(x,y).
$$}, and the orbit of $(X,Y)$ is now
$$
\{ (X,Y),  \left(\alpha \bX, Y\right), 
 \left( \alpha \bX ,  \beta \bY\right), \left( X, \beta \bY\right)\}.
$$
We form the alternating sum of~\eqref{eqQm1} over this orbit:
\begin{multline*}
  \tilde K(X,Y) \times \\
  \left( XY \Qta(X,Y)- \alpha \bX Y \Qta(\alpha \bX ,Y)
+\alpha \beta \bX \bY \Qta(\alpha \bX ,\beta \bY) - \beta X \bY
\Qta(X,\beta \bY)\right) \\
=(X-\alpha \bX)(Y-\beta \bY).
\end{multline*}
Returning to the original variables $x$ and $y$, this gives, after
dividing by $\tilde K(X,Y) $,
\begin{multline*}
(x-2su)(y-2u)\Qa(x,y) -\frac{\alpha \bx (y-2u)}{1-2us\bx} \Qa\left(
  \frac{2su-\bx}{1-2su\bx}, y\right) \\
+ \frac{\alpha \beta \bx\by }{(1-2su\bx)(1-2u\by)} \Qa\left(
  \frac{2su-\bx}{1-2su\bx}, \frac{2u-\by}{1-2u\by}\right)
- \frac{ \beta\by(x-2su)}{1-2u\by} \Qa\left( x, \frac{2u-\by}{1-2u\by}\right) \\
= \frac{(4su-x-\bx)(4u-y-\by)}{K(x,y)},
\end{multline*}
where $K(x,y)$ is given by~\eqref{Km1}. All the series occurring in
this equation are power series in $u$ with coefficients in $\qs[s, x,
\bx, y, \by]$. The series $(x-2su)(y-2u)\Qa(x,y)$ consists of
monomials in which the exponents of $x$ and $y$ are always
non-negative. In the three other series occurring in the left-hand side,
either the exponent of $x$, or the exponent of $y$ (or both) is
negative. This tells us that $(x-2su)(y-2u)\Qa(x,y)$ is the
non-negative part in $x$ and $y$ of the rational series occurring in the
right-hand side. In particular, extracting from the above equation the
coefficient of $x^0y^0$ gives
\begin{align*}
4su^2 \Qa(0,0) &\equiv 4su^2 \Qa(-1,s,u;0,0) \\
&= [x^0y^0]\, 
\frac{(4su-x-\bx)(4u-y-\by)}{1-u(sx+s\bx+y+\by)+2u^2s(x\by+y\bx)}.
\end{align*}
Let us now perform this coefficient extraction, beginning with the
constant term in $y$: 
\begin{eqnarray*}
  4su^2 \Qa(0,0)&=& [x^0y^0]  \frac{(4su-x-\bx)(4u-y-\by)}{\big(1-us(x+\bx)\big)
\left(1- \frac{uy(1-2us\bx)}{1-us(x+\bx)} -
  \frac{u\by(1-2usx)}{1-us(x+\bx)}\right)}
\\
&=& [x^0]\frac{4su-x-\bx}{1-us(x+\bx)}\times
\\
& & \hskip -19mm \sum_{i,j\ge 0}{i+j\choose
  i}\frac
  {u^{i+j}(1-2us\bx)^j(1-2usx)^i}{(1-us(x+\bx))^{i+j}}[y^0]\left( 4uy^{j-i}-y^{j-i+1}-y^{j-i-1}\right).
\end{eqnarray*}
We thus need to extract from the double sum over $(i,j)$ the summands obtained for
$i=j$, for $i=j+1$ and for 
$j=i+1$. This yields three simple sums. Upon exchanging $i$ and $j$ in the third one, this gives
\begin{eqnarray}
  4su^2 \Qa(0,0)&=& [x^0]\frac{4su-x-\bx}{1-us(x+\bx)} \sum_{j\ge 0}
  \frac{u^{2j}(1-2us\bx)^j(1-2usx)^j}{(1-us(x+\bx))^{2j}} \nonumber \\
&& \hskip -16mm \times\left( 4u{2j\choose j} -u{2j+1\choose
    j}\frac{1-2usx}{1-us(x+\bx)}- u{2j+1\choose 
  j}\frac{1-2us\bx}{1-us(x+\bx)}\right) \nonumber
\\
&=& 2[x^0] (4su-x-\bx)\sum_{j\ge 0}C_j u^{2j+1} \,
  \frac{(1-2us\bx)^j(1-2usx)^j}{(1-us(x+\bx))^{2j+1}} 
\label{extr}
\end{eqnarray}
with $C_j$ the $j^{\mbox{\scriptsize th}}$ Catalan number.
It remains to extract the constant term in $x$. Let us return for a
while to the
expression~\eqref{expr-Qm1} of $\Q(-1,s,u)=\Qa(-1,\sqrt s, \sqrt
u;0,0)$ that we want to establish. It is equivalent to
\begin{multline}
\label{toprove}
4su^2 \Qa(-1, s,u;0,0)\equiv  4su^2 \Qa(0,0) \\  = 4 \sum_{j\ge 0} C_j u^{2j+1} \sum_{i\ge 0}{i+j\choose
   i} C_i (us)^{2i+1}  .
\end{multline}
Comparing with~\eqref{extr} shows that  what remains to prove is that
for  $j\ge 0$,
 $$
[x^0]  (4su-x-\bx) \frac{(1-2su\bx)^j(1-2sux)^j}{(1-su(x+\bx))^{2j+1}} 
= 2 \sum_{i\ge 0} {i+j\choose i} C_i (us)^{2i+1},
$$
or equivalently, by taking the generating function of this collection of identities:
\begin{multline*}
[x^0]  (4su-x-\bx)\sum_{j\ge 0} u^{2j} \frac{(1-2su\bx)^j(1-2sux)^j}{(1-su(x+\bx))^{2j+1}} 
\\ = 2 \sum_{j\ge 0} u^{2j}\sum_{i\ge 0}  {i+j\choose i} C_i(us)^{2i+1}.
\end{multline*}
This is of course equivalent to prove that~\eqref{extr} and~\eqref{toprove} coincide, but
the absence of the factor $C_j$ makes this new task easier. In
particular, we are now
handling algebraic series. Indeed, all the sums occurring in the above
identities can be evaluated in closed form, and what we now need to prove
is the following lemma.
\begin{Lemma}\label{lem:CT}
Let $R(s,u;x)$ be the following rational function:
\beq\label{R-def}
R(s,u;x)= \frac{(4su-x-\bx)(1-su(x+\bx))}{(1-su(x+\bx))^2-u^2\,
 (1-2su\bx)(1-2sux)}.
 \eeq
Then its constant term in $x$ is
\beq\label{extr3}
A(s,u):=[x^0]  R(s,u;x)=  \frac 1{us} \left(1-\sqrt{1- \frac{4u ^2s^2}{1-u^2}}\right).
\eeq
\end{Lemma}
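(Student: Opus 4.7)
The plan is to extract $[x^0] R(s,u;x)$ via a partial-fraction decomposition that exploits the symmetry of the rational function $R$ under $x \leftrightarrow \bar{x}$. Both the numerator $N(x) = (4su - x - \bar{x})(1 - su(x+\bar{x}))$ and the denominator $D(x) = (1-su(x+\bar{x}))^2 - u^2(1-2sux)(1-2su\bar{x})$ of $R$ are Laurent polynomials invariant under $x \leftrightarrow \bar{x}$, and hence, using $x\bar{x} = 1$, become polynomials in the single variable $w := x + \bar{x}$. Direct expansion gives $N = (4su - w)(1 - suw)$ and $D = s^2u^2 w^2 - 2su(1-u^2)w + (1-u^2) - 4s^2u^4$, a quadratic in $w$. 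A convenient normal form is obtained by setting $T := suw$ and completing the square, which produces $D = (T - (1-u^2))^2 + u^2\delta^2$ with $\delta^2 := 1 - (1+4s^2)u^2$.

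The key tool for the constant-term extraction is the classical identity
\[
[x^0] \frac{1}{x + \bar{x} - w_0} = -\frac{1}{\sqrt{w_0^2 - 4}},
\]
valid whenever $1/w_0$ is a formal power series in $u$ with no constant term; it follows by expanding $(x+\bar{x}-w_0)^{-1} = -w_0^{-1}\sum_{n\geq 0}(x+\bar{x})^n/w_0^n$ and using the generating function $\sum_{n\geq 0}\binom{2n}{n} z^n = (1-4z)^{-1/2}$. I would then perform a partial-fraction decomposition of $R$ with respect to $S := T - (1-u^2)$, writing
\[
su \cdot R = 1 + \frac{\alpha_+}{S - iu\delta} + \frac{\alpha_-}{S + iu\delta},
\]
where a short computation gives $\alpha_\pm = (\delta \pm iu)^2/2$. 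This neat form rests on the algebraic identity $(\delta \pm iu)^2 = 1 - (2+4s^2)u^2 \pm 2iu\delta$.

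The final step combines the two pieces. A direct calculation shows that $\sqrt{w_\pm^2 - 4}$, for the two roots $w_\pm = ((1-u^2) \pm iu\delta)/(su)$ of $D$, factors as $\sqrt{1-u^2}(\delta \pm iu)/(su)$, again thanks to the same identity. Applying the constant-term formula to each fraction and summing, the factor $\delta^2 + u^2 = 1 - 4s^2u^2$ cancels, leaving
\[
[x^0] R = \frac{1}{us}\left(1 - \frac{\delta}{\sqrt{1-u^2}}\right) = \frac{1}{us}\left(1 - \sqrt{1 - \frac{4u^2s^2}{1-u^2}}\right),
\]
as desired. The only point that really requires care is bookkeeping: identifying which roots of $D$ are small formal power series in $u$ (so that the one-sided Laurent expansions in $x$ are the correct ones), and checking that the intermediate quantities $\delta \pm iu$, though formally complex, combine into a power series with real (indeed rational) coefficients. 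Beyond this, the proof is a careful but routine algebraic simplification rather than a conceptually hard step.
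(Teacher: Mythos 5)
Your proof is correct, and it takes a genuinely different route from the paper's. The paper expands $R$ in partial fractions \emph{with respect to $x$}: it introduces the four roots $X_1,X_2,1/X_1,1/X_2$ of the denominator (the small ones being power series in $u$), reads off $[x^0]R=4X_1X_2/(X_1+X_2)$ after expressing $su$ through $X_1,X_2$, and then eliminates $X_1,X_2$ to obtain an algebraic equation for $A(s,u)$, finally identifying the correct factor as the one admitting a power series solution in $\qs[s][[u]]$. You instead exploit the $x\leftrightarrow \bar{x}$ symmetry to rewrite numerator and denominator as polynomials in $w=x+\bar{x}$, reduce to a quadratic in $w$ by completing the square, and apply the classical identity $[x^0](x+\bar{x}-w_0)^{-1}=-1/\sqrt{w_0^2-4}$ to the two conjugate partial fractions; I checked the key algebra ($\alpha_\pm=(\delta\pm iu)^2/2$, and $s^2u^2(w_\pm^2-4)=(1-u^2)(\delta\pm iu)^2$), and the pieces do sum to $\frac1{su}\bigl(1-\delta/\sqrt{1-u^2}\bigr)$, which is the claimed formula. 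Your route is shorter and fully explicit: it avoids the elimination step and the slightly delicate ``only one factor has a power series root'' argument, at the price of working momentarily over $\qs[s][i][[u]]$ and of the branch/expansion bookkeeping you correctly flag (the decomposition must be expanded as a series in $u$, which is legitimate since $1/w_\pm=O(u)$ and each factor $S\mp iu\delta$ has constant term $-1$ in $u$, and the conjugate contributions recombine into a real series). The paper's method, by contrast, is uniform with the kernel-root/elimination technique it uses elsewhere (e.g.\ for $A(a,s,t)$ in Proposition~\ref{prop:general}), which is presumably why it was preferred there.
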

\begin{proof}
  There are  several ways of performing this extraction effectively.
As in~\cite[Thm.~6.3.3]{stanley-vol2}, we  use a partial fraction extraction in $x$.

 The denominator of $R$ is a Laurent polynomial in $x$,
symmetric in $x$ and $\bx$, of degree 2. It has four roots, which are
Laurent series in $u$ with coefficients in $\cs[s]$. Two of them are
actually power series in $u$, and vanish at $u=0$:
$$
X_{1,2}
=
us\pm isu^2+s^3u^3\pm is(s^2+1/2)u^4+s^3(2s^2+1)u^5+O(u^6)
$$
where $i^2=-1$. The other two are $\bX_1:=1/X_1$ and $\bX_2:=1/X_2$.
We will now perform a partial fraction expansion of $R$
with respect to $x$, after writing  $R$ as
\beq\label{R-mixte}
R(s,u;x)= \frac{X_1X_2(4su-x-\bx)(1-su(x+\bx))}{u^2s^2(1-xX_1)(1-xX_2)(1-\bx X_1)(1-\bx X_2)}.
\eeq
In fact, we can also write the factor $su$  in terms of $X_1$ and
$X_2$, and this will simplify the result of the partial fraction
expansion a bit.  Indeed, since $X_1$ and $X_2$ cancel the denominator
of $R$, we derive from~\eqref{R-def} that
\beq\label{denom}
u^2 = \frac {(1-su(X_1+\bX_1))^2}{(1-2su\bX_1)(1-2suX_1)  }= \frac
{(1-su(X_2+\bX_2))^2}{(1-2su\bX_2)(1-2suX_2)  }.
\eeq
By solving the second equation for $su$, we find
$$
su= \frac{X_1+X_2}{2(1+X_1X_2)}.
$$
(There is another solution, $su= \frac{1+X_1X_2}{2(X_1+X_2)}$, but it is excluded
since the $X_i$'s are multiples of $u$.) Returning to~\eqref{R-mixte},
this gives
\begin{multline*}
  R(s,u;x) = \\ \frac{2X_1X_2\left(2(X_1+X_2)-(x+\bx)(1+X_1X_2)\right)\left(2(1+X_1X_2)-(x+\bx)(X_1+X_2)\right)}{(X_1+X_2)^2(1-xX_1)(1-xX_2)(1-\bx
    X_1)(1-\bx X_2)}
\\
=
 \frac{2(1+X_1X_2)}{X_1+X_2}
+ 
\frac{\alpha_1}{1-xX_1} + \frac{\alpha_2}{1-xX_2} +
\frac{\alpha_1\bx X_1}{1-\bx X_1}  +
\frac{\alpha_2\bx X_2}{1-\bx X_2} 
\end{multline*}
where
$$
\alpha_1=
- \frac{2X_2(1-X_1^2)}{(X_1+X_2)^2}
$$
and symmetrically for $\alpha_2$. Since $X_{1}$ and $X_2$ are
multiples of $u$, we can read off the coefficient of $x^0$ in $R(s,u;x)$:
$$
A(s,u)=[x^0]R(s,u;x)=  \frac{2(1+X_1X_2)}{X_1+X_2}
+ {\alpha_1} + {\alpha_2}=\frac {4X_1X_2}{X_1+X_2}.
$$
We finally eliminate $X_1$ and $X_2$ using the
identities~\eqref{denom}, and this gives an algebraic equation
satisfied by $A(s,u)$. This equation has four distinct factors. One is
quartic in $A$, and the other three are quadratic. Only one factor has
a solution that is a power series in $u$ with coefficients in
$\qs[s]$, and this solution is precisely~\eqref{extr3}.
\end{proof}
This concludes the proof of Proposition~\ref{prop:Qm1}.

\section{Proof of Lemma~\ref{lem:C<1/2}}
\label{APP:CSmall}

Recall that $C(b,v)$ is the generating function for connected arch
systems beginning with a red arch, counted by the number of arches
(variable $v$) and the number of
left-right pairs (variable~$b$). We denote by $\rho_ C(b)$ the radius of
convergence of $C(b,\cdot)$. For convenience, we repeat here the
lemma we want to prove.

\smallskip \noindent
{\bf {Lemma}~\ref{lem:C<1/2}.}
\emph{Let $b >0$. Then $\rho_C(b) \le 1/4$ and for
   $v \in  [0, \rho_C(b))$, 
$$
v  < C(b,v) <  \frac{1}{2}.
$$
The series $A(b,\cdot)$ and $U(b,\cdot)$ defined
  by~\eqref{UBAT}  have radius of  convergence at least $\rho_C(b)$.\\
 The series $C(0, \cdot)$, $A(0, \cdot)$ and $U(0, \cdot)$ have
 respectively radius $+\infty$, $+\infty$ and $1/2$.}

\begin{proof}
  Let us say that  a quarter plane
  loop is \emm self-avoiding, if it only visits the point $(0,0)$
  at the beginning and at the end, and does not visit any other point
  twice. It follows from the proof of~\eqref{eq:Qa} that, if a quarter plane
  loop is not connected, it admits a proper factor that is itself
  a loop. In particular, it is not self-avoiding. Consequently,
  every quarter plane self-avoiding loop is connected. It is standard if it
  begins with an \EE\ step.

\begin{figure}[ht]
\begin{center}
{\includegraphics[height=3cm]{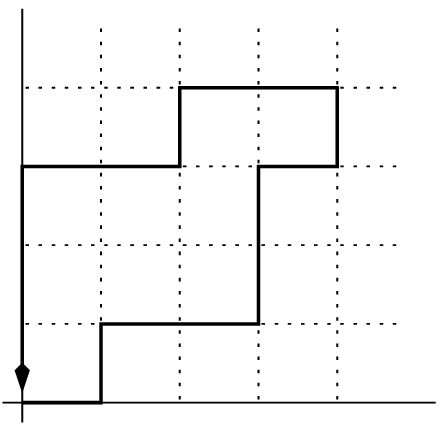}}
\caption{A staircase polygon.} 
\label{fig:stair}
\end{center}
\end{figure}

Let us use this to  bound  the radius
$\rho_C(b)$ from above. A quarter plane self-avoiding loop is 
a \emm staircase polygon, if it consists of a sequence of \EE\ and \NN\
steps, followed by a sequence of \WW\ and \SS\ steps
(Figure~\ref{fig:stair}). It is 
well known that the  generating function of staircase polygons (according to the
half-length) is~\cite[Exercise~6.19.l]{stanley-vol2}:
$$
SP(v)=\sum_{n\ge 1} \frac 1 {n+1}{2n\choose n} v^{n+1}=
\frac{1-\sqrt{1-4v}}{2},
$$
which has radius of convergence $1/4$. Since non-degenerate staircase polygons have
exactly one \NW\ corner, and no \ES\  corner, the above discussion implies
that the series  $C(b,v)$ dominates $b (SP(v) - v)$ term by term. Hence 
$\rho_C(b)$  is at most $1/4$.

Let us now prove the inequalities on $C(b,v)$. The lower bound is
obtained by counting only the arch system reduced to a single
arch. The upper bound follows from
another inequality, which is 
combinatorial in the sense  it holds coefficient by
coefficient. Namely, we will prove that the series
\beq\label{C-ineq}
C-v-bv^2-2C(C-v)
\eeq
has non-negative coefficients. For $v
\in (0, \rho_C(b))$, dividing by $C-v$ gives
$$
1-2C\ge \frac{bv^2}{C-v}>0.
$$
In particular $C(b,v)<1/2$.

So let us  prove that the series~\eqref{C-ineq} has non-negative
coefficients. 
We begin with some terminology. Two arches in an arch system are
\emph{parallel} if they are adjacent 
at both  ends,  nested, and have the same colour.  For instance, the arches 3
 and 4 in Figure~\ref{fig:ex} are parallel. We define the \emm
 negative, $-x$ of an arch system $x$ to be the one 
obtained by interchanging colours --- that is, reflecting $x$
in a horizontal line. Let $v$ denote the arch system
consisting of a single standard (that is, red) arch (conveniently, the
generating function for this arch is also $v$). 

Let $\C$ denote the collection of all standard connected arch
systems, counted by $C(b,v)$.  We now define two injective maps: 
\[
\Phi: (\C - v) \times \C \to \C \qquad \hbox{and} \qquad 
\Psi: \C \times (\C - v) \to \C
\]
whose images are disjoint, and which do not change the total number of
arches nor the total number of left-right pairs.

\textbf{Construction of $\Phi$}.  
 Take $x \in \C - v$ and $y \in \C$. If the \emm last, arch
of $x$ (that is, the one with the rightmost right end) is red, let $x' = -x$, otherwise let $x' = x$. Now form
$\Phi(x,y)$ as follows (Figure~\ref{fig:Phi}). Place $x'$ to the left
of $y$.  Unhook the right end of the
last arch of $x'$ and pass it beneath $y$ before reconnecting with the
line.  Unhook the left end of the first arch of $y$ and pass it above
$x'$ before reconnecting with the line. 

Let us prove that the resulting  arch system $\Phi(x,y)$ is connected. Its graph
is obtained by juxtaposing the graphs of $x'$ and $y$ (which are both
connected) and adding an edge between them (corresponding to the
crossing between the first and last arches of $\Phi(x,y)$). This graph
is connected, and so is $\Phi(x,y)$. This graph can also be used to
prove that $\Phi$ is injective:  if we delete the
edge connecting the first and last arch, we obtain two connected
components. The left one is $x'$ (from which we can find $x$)
and the right one is $y$. 

Note that since $x \neq v$, the first two arches
of $\Phi(x,y)$ do not cross. One also checks that the number of
left-right pairs behaves additively (this also uses the fact that $x \ne v$).

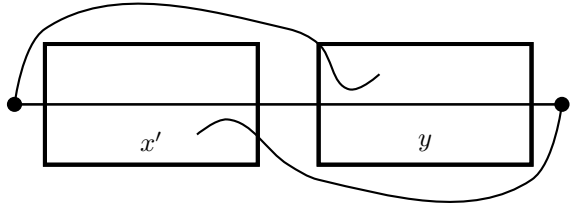
\begin{figure}[ht]
\centerline{
\psset{xunit=0.4}
\psset{yunit=0.4}
\begin{pspicture}(0,-2)(20,6)
\pspolygon[linewidth=1.5pt](1,0)(8,0)(8,4)(1,4)
\pspolygon[linewidth=1.5pt](10,0)(17,0)(17,4)(10,4)
\psline(0,2)(18,2)
\pspolygon[linewidth=1.5pt](1,0)(8,0)(8,4)(1,4)
\pspolygon[linewidth=1.5pt](10,0)(17,0)(17,4)(10,4)
\psline(0,2)(18,2)
\uput[90](4.5,0){$x'$}
\uput[90](13.5,0){$y$}
\pscurve(12,3)(11,2.5)(10,4)(9,4.5)(1,4.5)(0,2)
\pscurve(6,1)(7,1.5)(9,0)(10,-0.5)(17,-0.5)(18,2)
\pscircle*(0,2){0.1}
\pscircle*(18,2){0.1}
\end{pspicture}
}
\caption{The construction of $\Phi(x,y)$.}
\label{fig:Phi}
\end{figure}

\textbf{Construction of $\Psi$}. 
Let $(x, y) \in  \C \times (\C - v)$. Place $-x$ on the line,
and place a copy of $y$ between its first two points. Now unhook the
left end  the first arch of $y$ and pass it above the first point of
$-x$ before reconnecting with the line (Figure~\ref{fig:Psi}).

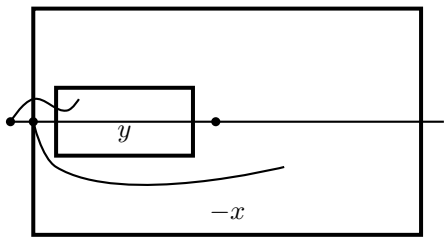
\begin{figure}
\centerline{
\psset{xunit=0.3}
\psset{yunit=0.3}
\begin{pspicture}(0,0)(20,11)
\pspolygon[linewidth=1.5pt](1,0)(18,0)(18,10)(1,10)
\pspolygon[linewidth=1.5pt](2,3.5)(8,3.5)(8,6.5)(2,6.5)
\psline(0,5)(19,5)
\pscurve(3,6)(2.5,5.5)(1,6)(0,5)
\pscurve(1,5)(2,3)(12,3)
\pscircle*(9,5){0.06}
\pscircle*(1,5){0.06}
\pscircle*(0,5){0.06}
\uput[90](5,3.5){$y$}
\uput[90](9.5,0){$-x$}
\end{pspicture}
}
\caption{The construction of $\Psi(x,y)$.}
\label{fig:Psi}
\end{figure}

As above, the graph of $\Psi(x,y)$ is obtained by adding an edge
joining a vertex of the graph of $x$ to a vertex of the graph of
$y$. This graph is connected, and so is  $\Psi(x,y)$. If we delete
from the graph of  $\Psi(x,y)$
  the edge between the first two arches we obtain two connected
components from which we can recover $x$ and $y$. Hence $\Psi$ is
injective.

 The number of 
 left-right pairs still
behaves additively (this uses $y \ne v$). 
Since the first two arches of  $\Psi(x,y) $ \emm do,
cross,  the range of $\Psi$ is disjoint 
from that of $\Phi$.  In particular, the union of their images is
enumerated by $2C(C-v)$. This series
counts a subset of $\cC$ which consists of arch systems with at least
three arches.
We thus conclude that
$
C-v-bv^2 - 2C(C-v)
$
has non-negative coefficients, as claimed.

To finish, it is clear that $A(b,v)=1+(b-1)(1+2C(b,v))$ has at least
radius $\rho_C$. Moreover, since $C(b,v)$ has non-negative
coefficients, then the first part of the lemma implies that
$|C(b,v)|<1/2$ for $b>0$ and $|v|< \rho_C(b)$. The function 
$$
U(b,v)=\frac v{(1+2C(b,v))^2}
$$
is thus analytic in this disk, and thus has radius of convergence at
least $\rho_C(b)$. The results stated for $b=0$ are obvious, since $C(0,v)=v$.
\end{proof}

\section*{References}
\bibliography{combinatorics}

\end{document}